\newcommand{\C}{\mathbb{C}}
\newcommand{\M}{\mathcal{M}}
\newcommand{\R}{{\mathbb{R}}}
\newcommand{\He}{\mathbb{H}}
\newcommand{\cC}{\mathcal{C}^\infty_c}
\newcommand{\cCP}{\mathcal{P}^\infty}
\newcommand{\lag}{\langle}
\newcommand{\rag}{\rangle}
\newcommand{\Ga}{\Gamma}
\newcommand{\Gad}{\Gamma_{\!\!2}}
\newcommand{\Bx}{{\mathbf x}}
\newcommand{\By}{{\mathbf y}}
\newcommand{\Bp}{{\mathbf p}}
\newcommand{\bu}{\bullet}
\newtheorem{thm}{Theorem}[section]
\newtheorem{rmk}[thm]{Remark}
\newtheorem{lem}[thm]{Lemma}
\newtheorem{cor}[thm]{Corollary}
\newtheorem{prop}[thm]{Proposition}
\author{D. Bakry, F. Baudoin, M. Bonnefont,  D. Chafa\"\i\\
{\small Institut de Math\'ematiques de Toulouse} \\
{\small Universit\'e de Toulouse} \\
{\small CNRS 5219}} 
\title{On gradient bounds for the heat kernel\\ on the Heisenberg group}
\begin{document}
\maketitle

\begin{abstract}  
 It is known that the couple formed by the two dimensional Brownian motion
 and its L\'evy area leads to the heat kernel on the Heisenberg group, which
 is one of the simplest sub-Riemannian space. The associated diffusion
 operator is hypoelliptic but not elliptic, which makes difficult the
 derivation of functional inequalities for the heat kernel. However, Driver
 and Melcher and more recently H.-Q. Li have obtained useful gradient bounds
 for the heat kernel on the Heisenberg group. We provide in this paper simple
 proofs of these bounds, and explore their consequences in terms of
 functional inequalities, including Cheeger and Bobkov type isoperimetric
 inequalities for the heat kernel.
\end{abstract}

{\small\noindent\textbf{Keywords:} Heat kernel ; Heisenberg group ; functional inequalities ; hypoelliptic diffusions\\\textbf{AMS-MSC:} 22E30 ; 60J60}

{\footnotesize\tableofcontents}

\section{Introduction}

Gradient bounds had proved to be a very efficient tool for the control of the
rate of convergence to equilibrium, quantitative estimates on the
regularization properties of heat kernels, functional inequalities such as
Poincar\'e, logarithmic Sobolev, Gaussian isoperimetric inequalities for heat
kernel measures. The reader may take a look for instance at
\cite{Tanigushi,varopoulos,varopoulos-et-al,ledoux-zurich,livre} and
references therein. When dealing with the simplest examples, such as linear
parabolic evolution equations (or heat kernels), those gradient bounds often
rely on the control of the intrinsic Ricci curvature associated to the
generator of the heat kernel. Those methods basically require some form of
ellipticity of the generator.

\subsection*{The elliptic case}

Let $\M$ be a complete connected Riemannian manifold of dimension $n$ and let
$L$ be the associated Laplace-Beltrami operator, written in a local system of
coordinates as
$$
L(f)(x)= \sum_{i,j=1}^n a_{i,j}(x) \partial^{2}_{x_i x_j}f(x).
$$ 
The coefficients $x\mapsto a_{i,j}(x)$ are smooth and the symmetric matrix
$(a_{i,j}(x))_{1\leq i,j\leq n}$ is positive definite for every $x$. The
``length of the gradient'' $\left|\nabla f\right|$ of a smooth $f:\M\to\R$ is
given by
$$
\Ga(f,f)= \left|\nabla f\right|^{2} = \frac{1}{2}(L(f^{2})-2 fLf)= 
\sum_{i,j=1}^n a_{i,j}(x) \partial_{x_i}f \partial_{x_j} f.
$$
Let $(P_{t})_{t\geq0}=(e^{tL})_{t\geq0}$ be the heat semigroup generated by
$L$. For every smooth $f:\M\to\R$, the function $(t,x)\mapsto P_t(f)(x)$ is
the solution of the heat equation associated to $L$ 
$$
\partial_t P_t(f)(x)=LP_t(f)(x) \quad\text{and}\quad P_0(f)(x)=f(x).
$$
For every real number $\rho\in\R$, the following three propositions are
equivalent (see \cite{Tanigushi,ledoux-zurich,sturm}).
\begin{enumerate}
\item $\forall f\in\cC(\M),~\mathrm{Ricci}(\nabla f,\nabla f)\geq
 \rho\left|\nabla f\right|^2$
\item $\forall f\in\cC(\M),~ \forall t\geq0, ~ \left|\nabla P_{t}
   f\right|^{2}\leq e^{-2\rho t}\,P_{t}(\left|\nabla f\right|^{2})$
\item $\forall f\in\cC(\M), ~ \forall t\geq0,~ \left|\nabla P_{t}
   f\right|\leq e^{-\rho t}\,P_{t}(\left|\nabla f\right|)$
\end{enumerate}
This is the case for some $\rho\in\R$ when $\M$ is compact. This is also the
case with $\rho=0$ when $\M$ is $\R^n$ equipped with the usual metric since
$\mathrm{Ricci}\equiv0$. In this last example, $L$ is the usual Laplace
operator $\Delta$ and the explicit formula for the heat kernel gives $\nabla
P_t f=P_t \nabla f$ for the usual gradient $\nabla$ and thus $\left|\nabla P_t
 f\right|\leq P_t\left|\nabla f\right|$. Back to the general case, and
following \cite{Tanigushi}, the gradient bounds 2. or 3. above are equivalent
to their infinitesimal version at time $t=0$, which reads
$$
\Gad(f,f)\geq \rho \Ga(f,f)
$$
where
$$
\Gad(f,f) = \frac{1}{2}(L\Ga(f,f)- 2\Ga(f,Lf))
= \left|\nabla\nabla f\right|^{2}+ \mathrm{Ric}(\nabla f, \nabla f).
$$
The bound $\Gad\geq \rho \Ga$ had proved to be a very efficient criterion for
the derivation of gradient bounds for more general Markov processes, including
for instance processes generated by an operator $L$ with a first order linear
part (i.e. with a potential).

In the equivalence above, one may add several other inequalities, including
local Poincar\'e inequalities, local logarithmic Sobolev inequalities, and
local Bobkov isoperimetric inequalities, and their respective reverse forms,
with a specific constant involving $e^{-\rho t}$, see
\cite{Tanigushi,ledoux-zurich}. Here the term local means that they concern
the probability measure $P_t(\cdot)(x)$ for any fixed $t$ and $x$. One may
also replace in these inequalities $e^{-\rho t}$ by any function $c(t)$
continuous and differentiable at $t=0$ with $c(0)=1$ and $c'(0)=-\rho$. In the
present paper, we will focus on the Heisenberg group, a non elliptic situation
where these equivalences do not hold, but where some gradient bounds are still
available and provide local inequalities of various types.

\subsection*{The Heisenberg group}

In recent years, some focus had been set on some degenerate situations, where
the methods used for the elliptic case do not apply. One of the simplest
example of such a situation is the Heisenberg group (see section
\ref{se:facts} for the group structure). Namely, we consider on $\He=\R^{3}$
the vector fields
$$
X= \partial_{x}-\frac{y}{2}\partial_{z} \text{\quad and\quad} %
Y = \partial_{y} + \frac{x}{2}\partial_{z}
$$
and the operator 
\begin{align}\label{eq:LH}
L&=
X^{2}+Y^{2} 
=\partial_x^2+\partial_y^2+\frac{1}{4}(x^2+y^2)\partial_z^2+x\partial^2_{y,z}-y\partial^2_{x,z}.
\end{align}
This operator is self-adjoint for the Lebesgue measure on $\R^{3}$. The matrix
of second order derivatives associated to $L$ is degenerate and thus $L$ is
not elliptic. If $[U,V]=UV-VU$ stands for the commutator of $U$ and $V$, then
$$
Z:=[X,Y]= \partial_{z} %
\text{\quad and\quad} %
[X,Z]= [Y,Z]=0.
$$
In particular, $L$ is hypoelliptic in the H\"ormander sense (the Lie algebra
described by $\{X,Y,Z\}$ is the Lie algebra of the Heisenberg group, see
section \ref{se:facts}). As a consequence, the heat semigroup
$(P_{t})_{t\geq0} =(e^{tL})_{t\geq0}$ obtained by solving the heat equation
associated to $L$ admits a smooth density with respect to the Lebesgue measure
on $\R^3$. It is remarkable that the Markov process associated to this
semigroup is the couple formed by a Brownian motion on $\R^2$ and its L\'evy
area, and for every fixed $t>0$ and $\Bx\in\He$, the probability distribution
$P_t(\cdot)(\Bx)$ is a sort of Gaussian on $\He$. We refer to \cite{baudoin}
and \cite{neuenschwander} for such probabilistic aspects. For this operator
$L$ we have also
\begin{equation}\label{eq:GaH}
\Ga(f,f) = X(f)^{2}+ Y(f)^{2}
\end{equation}
and 
\begin{multline*}
 \Gad(f,f)= X^{2}(f)^{2}+ Y^{2}(f)^{2}+\frac{1}{2}(XY+YX)(f)^{2}
 +\frac{1}{2}(Zf)^{2}+2(XZ (f)Y(f)-YZ(f)X(f)).
\end{multline*}
The presence of $YZ(f)$ and $XZ(f)$ in the $\Gad$ expression forbids the
existence of a constant $\rho\in\R$ such that $\Gad \geq \rho \Ga$ as
functional quadratic forms. Therefore the methods used in the elliptic case to
prove gradient bounds could not work. In other words, the Ricci tensor is
everywhere $-\infty$. In fact, a closer inspection of the Ricci tensor of the
elliptic operator $X^{2}+Y^{2}+ \epsilon Z^{2}$ when $\epsilon$ goes to $0$
shows that this operator has everywhere a Ricci tensor which is
$$
\begin{pmatrix}-\frac{1}{2\epsilon} &0&0\\ 0&-\frac{1}{2\epsilon}&0\\ 0&0&\frac{1}{2\epsilon ^ 2}\end{pmatrix}.
$$
In the limit, one may consider that the lower bound of the Ricci tensor for
$L$ is everywhere $-\infty$. Despite this singularity, B. Driver and T.
Melcher proved in \cite{DriverMelcher} the existence of a finite positive
constant $C_2$ such that
\begin{equation} \label{DM} \forall f\in\cCP(\He), ~\forall t\geq0, ~
 \left|\nabla P_{t} f\right|^{2}\leq C_2 P_{t}( \left|\nabla f\right|^{2}).
\end{equation} 
where $\cCP(\He)$ is the class of smooth function from $\He$ to $\R$ with all
partial derivatives of polynomial growth. Here $C_2$ is the best constant,
i.e. the smallest possible. As in the elliptic case, the gradient bound
\eqref{DM} implies a Poincar\'e inequality for $P_t$, since
\begin{equation}\label{eq:PI-from-DM}
 P_t(f^2)-(P_t f)^2 %
 = 2\int_0^t\!P_s(\left|\nabla P_{t-s} f\right|^2)\,ds 
 \leq 2tC_2 P_t(\left|\nabla f\right|^2). 
\end{equation}
The gradient bound \eqref{DM} gives also a reverse Poincar\'e inequality for
$P_t$, since
\begin{equation}\label{eq:RPI-from-DM}
 P_t(f^2)-(P_t f)^2 %
 = 2\int_0^t\!P_s(\left|\nabla P_{t-s} f\right|^2)\,ds \geq \frac{2t}{C_2}
 \left|(\nabla P_t f)\right|^2. 
\end{equation}
>From the point of view of regularization, \eqref{eq:RPI-from-DM} is the most
important, while \eqref{eq:PI-from-DM} is more concerned with estimates on the
heat kernel and concentration properties. More recently, H.-Q. Li showed in
\cite{HQLi} that there exists a finite positive constant $C_1$ such that
\begin{equation} \label{HQLi} \forall f\in\cCP(\He), ~\forall t\geq0, ~
 \left|\nabla P_{t} f\right|\leq C_1 P_{t}( \left|\nabla f\right|).
\end{equation}
It is shown in \cite{DriverMelcher} that $C_1\geq\sqrt{2}$ and $C_2\geq2$. The
Jensen or Cauchy-Schwarz inequality for $P_t$ gives $C_2\leq C_1^2$, however,
the exact values of $C_1$ and $C_2$ are not known to the authors knowledge.
The gradient bound \eqref{HQLi} is far more useful than \eqref{DM}, and has
for instance many consequences in terms of functional inequalities for $P_t$,
including Poincar\'e inequalities, Gross logarithmic Sobolev inequalities,
Cheeger type inequalities, and Bobkov type inequalities, as presented in
section \ref{se:inegs}. As we shall see later, \eqref{HQLi} is much harder to
obtain than \eqref{DM}.

More generally, one may consider for any $p\geq1$ and $t\geq0$ the best
constant $C_p(t)$ in $[0,\infty]$ (i.e. the smallest possible, possibly
infinite) such that
$$
\forall f\in\cCP(\He),~\left|\nabla P_t f\right|^p\leq C_p(t)P_t(\left|\nabla
 f\right|^p).
$$
It is immediate that $C_p(0)=1$. According to \cite{DriverMelcher} and
\cite{HQLi}, for every $p\geq1$ and $t>0$, the quantity $C_p(t)$ belongs to
$(1,\infty)$ and does not depend on $t$. In particular, $C_p$ is discontinuous
at $t=0$, and this reflects the fact that the $\Gad$ curvature of $L$ is
$-\infty$.

The aim of this paper is mainly to provide simpler proofs of the gradient
bounds \eqref{DM} and \eqref{HQLi}. We also give in section \ref{se:inegs} a
collection of consequences of \eqref{HQLi} in terms of functional inequalities
for the heat kernel of the Heisenberg group. Section \ref{se:facts} gathers
some elementary properties of the Heisenberg group used elsewhere. Section
\ref{se:oip} provides a direct simple proof of the reverse Poincar\'e
inequality \eqref{eq:RPI-from-DM} without using \eqref{DM} or \eqref{HQLi}.
Sections \ref{se:DM} and \ref{se:HQLi} provide elementary proofs of \eqref{DM}
and \eqref{HQLi} respectively.
 
\section{Elementary properties of the Heisenberg group}
\label{se:facts}

We summarize in this section the main properties of the Heisenberg group that
we use in the present paper. For more details on the geometric aspects, we
refer to \cite{gromov,montgomery,juillet}. The link with the Brownian motion
and its L\'evy area is considered for instance in \cite{baudoin} and
\cite{neuenschwander}. From now on, we shall use the notations
$$
\lag f\rag \quad\text{and}\quad \lag f,g\rag =\lag fg \rag
$$
to denote the integral of a function $f$ with respect to the Lebesgue measure
in $\R^{3}$ and the scalar product of two functions $f,g$ in
$\mathrm{L}^{2}(\R^{3},\R)$. The Heisenberg group $\He$ is the set of matrices
$$
M(x,y,z)= \begin{pmatrix} 1&x&z\\0&1&y\\0&0&1\end{pmatrix}
$$
equipped with the following non-commutative product
$$
M(x,y,z)M(x',y',z')= M(x+x',y+y', z+z'+xy').
$$ 
The inverse of $M(x,y,z)$ is $M(-x,-y, xy-z)$. It is often more convenient to
work with the Lie algebra of the group. For that, we define
$$
X=\begin{pmatrix} 0&1&0\\0&0&0\\0&0&0\end{pmatrix},~
Y=\begin{pmatrix} 0&0&0\\0&0&1\\0&0&0\end{pmatrix},~
Z= \begin{pmatrix} 0&0&1\\0&0&0\\0&0&0\end{pmatrix}
$$ 
and we consider
$$ 
N(x,y,z)= \exp(xX+yY+zZ),
$$ 
which gives
$$
N(x,y,z)=M\left(x,y,z+\frac{xy}{2}\right).
$$
We shall therefore identify a point $\Bx=(x,y,z)$ in $\R^{3}$ with the matrix
$N(x,y,z)$ and endow $\R^{3}$ with this group structure that we denote $\Bx
\bu\By$ which is
$$
(x,y,z)\bu (x',y',z')= (x+x',y+y', z+z'+\frac{1}{2}(xy'-yx')).
$$
The left invariant vector fields which are given by
$$
X(f)= \lim_{\epsilon\to 0} \frac{f(\Bx\bu (\epsilon,0,0))- 
f(\Bx)}{\epsilon}= (\partial_{x}- \frac{y}{2}\partial_{z}) (f),
$$

$$
Y(f)= \lim_{\epsilon\to 0} \frac{f(\Bx\bu (0,\epsilon,0))- 
f(\Bx)}{\epsilon}= (\partial_{y}+ \frac{x}{2}\partial_{z}) (f),
$$

$$
Z(f)= \lim_{\epsilon\to 0} \frac{f(\Bx\bu (0,0,\epsilon))- 
f(\Bx)}{\epsilon}= \partial_{z}(f),
$$
while the right invariant vector fields are given by
$$
\hat X(f)= \lim_{\epsilon\to 0} \frac{f((\epsilon,0,0)\bu \Bx)- 
f(\Bx)}{\epsilon}= (\partial_{x}+ \frac{y}{2}\partial_{z}) (f),
$$
$$
\hat Y(f)= \lim_{\epsilon\to 0} \frac{f((0,\epsilon,0)\bu \Bx)- 
f(\Bx)}{\epsilon}= (\partial_{y}- \frac{x}{2}\partial_{z}) (f).
$$
And $\hat Z= Z$ since for the points on the $z$ axis, left and right multiplications
coincide. The Lie algebra structure is described by the identities $[X,Y]=Z$
and $[X,Z]=[Y,Z]=0$. In what follows, we are mainly interested in the operator
$L=X^{2}+Y^{2}$, and the associated heat semigroup
$(P_{t})_{t\geq0}=(e^{tL})_{t\geq0}$. We shall make a strong use of symmetries
in what follows. They are described by the Lie algebra of the vector fields
which commute with $L$. This Lie algebra is $4$-dimensional and is generated
by the vector fields $\hat X$, $\hat Y$, $Z$, and $\Theta=
x\partial_{y}-y\partial_{x}$. The first ones, which correspond to the right
action, commute with $X,Y,Z$ (as it is the case on any Lie group), the last
one reflects the rotational invariance of $L$. There is also another vector
field which plays an important role : the dilation operator $D$, described by
$$D= \frac{1}{2}(x\partial_{x}+y\partial_{y}) + z\partial_{z}.$$
This operator $D$ satisfies 
\begin{equation} \label{Dil1}
[L,D]=L.
\end{equation} 
Let $(T_{t})_{t\geq0}= (e^{tD})_{t\geq0}$ be the group of dilations generated
by $D$, that is
$$
T_t f(x,y,z)= f(e^{t/2}x, e^{t/2}y, e^tz).
$$
>From the commutation relations, one deduces
\begin{equation} \label{Dil2}
 P_{t}T_{s}=T_{s}P_{e^{s}t},
\end{equation}
and
\begin{equation} \label{Dil3}
 P_{t}D= DP_{t}+ tP_{t}L.
\end{equation} 
Since $P_t$ commutes with left translations, if $l_{\Bx}(f)(\By)= f(\Bx\By)$,
then
$$
P_{t}(f)(\Bx)= l_{x}P_{t}(f)(0)= P_{t}(l_{x}f)(0).
$$
Here we have just formalized the fact that $(P_{t})_{t\geq0}$ is a heat
semigroup on a group. Moreover, since $0$ is a fixed point of the dilation
group, one has from equation \eqref{Dil2}
$$
P_{t}(f)(0)= P_{1}(T_{\log t}f)(0).
$$
This explains why $P_{1}(f)(0)$ gives the whole $(P_t f)_{t\geq0}$. It is well
known that
$$P_{1}(f)(0)=\int_{\R^{3}} f(\By) h(\By) d\By,$$ where $d\By$ is the 
Lebesgue measure on $\R^{3}$ and the function $h$ has the following Fourier
representation in the $z$ variable
\begin{equation}\label{eq:h}
h(x,y,z)= \frac{1}{8\pi^2}\int_{-\infty}^{+\infty}\!e^{i\lambda 
 z}\exp\left(-\frac{r^{2}}{4}\lambda\coth \lambda\right)\frac{\lambda}{\sinh
 \lambda}d\lambda.
\end{equation}
where $r^{2}= x^{2}+y^{2}$ if $\Bx=(x,y,z)$. This formula appeared
independently in the works of Gaveau and L\'evy. It is not easy to deduce from
this formula any good estimates on $h$, and it is not even easy to see that
$h$ is positive. Nevertheless, there are quite precise bounds on this function
and its derivatives, see for instance \cite{Gaveau,Gaveau2} and
\cite{HQLi,Li2}, which may be expressed in terms of the Carnot-Carath\'eodory
distance. The Carnot-Carath\'eodory distance may be defined as
$$ 
d(\Bx,\By)= \sup_{\{f\text{ such that }\,\Ga(f,f) \leq1\}}f(x)-f(y)
$$
where $\Ga$ is as in \eqref{eq:GaH}. Here, the explicit form is not easy to
write, but we shall only need to express the distance from $0$ to $\Bx$. In
order to do that, it is better to describe the constant speed geodesics
starting from $0$. First, straight lines in the $(x,y)$ plane passing through
$(0,0)$ are geodesics, and the other ones are helices, whose orthogonal
projection on the horizontal plane $\{z=0\}$ is a circle containing the
origin (see \cite{baudoin}). If a point $\Bx(s)$ is moving at constant speed (for the
Carnot-Carath\'eodory distance on the geodesic, its horizontal projection
$\Bp(s)$ moves with unit speed (for the Euclidean distance) on this circle.

Moreover, the height $z(t)$ of the point $\Bx(s)$ is the surface between the
segment $(0, \Bp(s))$ and the circle, which comes from the fact that for any
curve in $\R^{3}$ whose tangent vector is a linear combination of $X$ and $Y$,
one has
$$
dz= \frac{1}{2}(xdy-ydx),
$$ 
that is the area spanned by the point $\Bp(s)$ in the plane. Note that all the
geodesics end on the $z$ axis, which is therefore the cut-locus of the point
$0$. Those geodesics may be parameterized as follows, using a complex notation
for the horizontal projection $\Bp(s)$

\begin{equation}\label{eq:geodesics}
 \Bp(s) = u\left(1- \exp\left(\frac{is}{\left|u\right|}\right)\right) %
 \quad\text{and}\quad %
 z(s)= \frac{\left|u\right|^{2}}{2}%
 \left(\frac{s}{\left|u\right|}%
   -\sin\left(\frac{s}{\left|u\right|}\right)\right).
\end{equation}
Here, $u$ is the center of the circle which is the horizontal projection of
the geodesic, and $s$ is distance from $0$ ($s\in (0, 2\pi\left|u\right|)$).
When $\left|u\right|$ goes to infinity, we recover the straight lines.

If we call $d(\Bx)$ the Carnot-Carath\'eodory distance from $0$ to $\Bx$, it
is easy to see from that if $\Bx= (x,y,z)$, and $s= d(\Bx)$, then
$$
d\left(\frac{x}{s}, \frac{y}{s}, \frac{z}{s^{2}}\right)= 1.
$$ 
This corresponds to a change of $u$ into $\frac{u}{s}$. Now, the unit ball for
the Heisenberg metric is between two balls for the usual Riemannian metric of
$\R^{3}$, and therefore one concludes easily that the ratio
$$\frac{(x^{2}+y^{2})^{2}+ z^{2}}{d^{4}(x,y,z)}$$ is bounded above 
and below. Although, if $R= ((x^{2}+y^{2})^{2}+ z^{2})^{1/4}$, the function
$\Ga(R,R)$ is bounded above but not below.

Let $h$ be the heat kernel density at time $1$ and at the origin, given by
\eqref{eq:h}. The main properties of $h$ used in the present paper are the
following. Here for real valued functions $a$ and $b$, we use the notation
$a\simeq b$ when the ratio $a/b$ is bounded above and below by some positive
constants. First,
\begin{equation} \label{esth1}
 h(\Bx)\simeq
 \frac{\exp(-\frac{d^{2}(\Bx)}{4})}{\sqrt{1+
     \left\|\Bx\right\|d(\Bx)}},
\end{equation} 
where $\left\|\Bx\right\|$ denotes the Euclidean norm of the projection of
$\Bx$ onto the plane $\{z=0\}$. Then, for some constant $C$
\begin{equation} \label{esth2} 
\Ga(\log h,\log h)(\Bx) \leq C (1+d(\Bx))
\end{equation}
and
\begin{equation} \label{esth3} 
\left|Z(\log h)\right| \leq C.
\end{equation}
The last one is not completely explicited in \cite{HQLi} but follows easily
from the estimation of $W_{1}$ page 376 of this paper.

\begin{lem}\label{le:A-Algebra}
 The Schwartz space $\mathcal{S}$ of smooth rapidly decreasing functions on
 the Heisenberg group $\He$ is left globally stable by $L$ and by $P_t$ for
 any $t\geq0$.
\end{lem}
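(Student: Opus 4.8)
Stability of $\mathcal S$ under $L$ I would dispatch first: by \eqref{eq:LH}, $L=X^2+Y^2$ is a second order differential operator whose coefficients are affine functions of $x,y$, and the Schwartz space $\mathcal S=\mathcal S(\R^3)$ is stable under $\partial_x,\partial_y,\partial_z$ and under multiplication by polynomials, hence under $X$, $Y$, and $L$. The substance of the lemma is the stability under $P_t$, and the plan is to obtain it from the convolution representation of the heat semigroup together with the kernel bound \eqref{esth1}.

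I would first reduce to $t=1$. The case $t=0$ is trivial; for $t>0$, the commutation relation \eqref{Dil2} gives $P_t=T_{-\log t}\,P_1\,T_{\log t}$, and each dilation $T_s$ is a linear automorphism of $\mathcal S$ (it is composition with a linear bijection of $\R^3$), so it is enough to prove $P_1(\mathcal S)\subseteq\mathcal S$. By left invariance, $P_1f(\Bx)=(f\star h)(\Bx):=\int_{\He}f(\Bx\bu\By)\,h(\By)\,d\By$ with $h$ as in \eqref{eq:h}. The key step is then the following elementary statement, which I would isolate: \emph{if $g\in\mathcal S$ and $k\colon\He\to\R$ is measurable and decays faster than any polynomial in the Euclidean norm $|\Bx|$ of $\R^3$, then $g\star k\colon\Bx\mapsto\int_{\He}g(\Bx\bu\By)\,k(\By)\,d\By$ again belongs to $\mathcal S$.} Granting this, the proof is complete: by \eqref{esth1} and $d^4(\Bx)\simeq(x^2+y^2)^2+z^2$ one has $h(\Bx)\le C\exp(-\tfrac14 d^2(\Bx))\le C'\exp(-c|\Bx|)$, so $h$ decays faster than any polynomial, and the statement applied with $g=f$, $k=h$ gives $P_1f\in\mathcal S$.

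To prove the elementary statement I would argue in two steps. \emph{Step 1 (smoothness and derivatives).} Differentiating under the integral sign is legitimate, since the derivatives of $g$ are bounded and $\By\mapsto p(\By)k(\By)$ is integrable for every polynomial $p$. As the $\Bx$-dependence of $g(\Bx\bu\By)$ enters only through the coordinates $x+x'$, $y+y'$, $z+z'+\tfrac12(xy'-yx')$ of $\Bx\bu\By$, each of $\partial_x,\partial_y,\partial_z$ applied to $g(\Bx\bu\By)$ produces a sum of terms $(\partial^{\beta}g)(\Bx\bu\By)$ times a polynomial in $(x',y')$ not involving $\Bx$; iterating, $\partial^{\alpha}(g\star k)=\sum_j(\partial^{\beta_j}g)\star(p_j\,k)$ with $\partial^{\beta_j}g\in\mathcal S$ and $p_j\,k$ again rapidly decreasing. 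It remains only to show that $g\star k$ itself decays faster than any polynomial. \emph{Step 2 (decay).} Here I would work with the left-invariant Carnot--Carath\'eodory distance $d$. The comparisons recalled just before the statement give $1+d(\Bw)\le C(1+|\Bw|)$ and $1+d(\Bw)\ge c(1+|\Bw|)^{1/2}$ for all $\Bw\in\R^3$; from the first, $|g(\Bw)|\le C_M(1+d(\Bw))^{-M}$ and $|k(\By)|\le C_M(1+d(\By))^{-M}(1+|\By|)^{-M}$ for every $M$. The triangle inequality for $d$, together with $d(\By^{-1})=d(\By)$ (left invariance and symmetry of $d$), gives $d(\Bx)\le d(\Bx\bu\By)+d(\By)$, hence $(1+d(\Bx\bu\By))(1+d(\By))\ge 1+d(\Bx)$. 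Combining these bounds, the integrand of $(g\star k)(\Bx)$ is at most $C_M(1+d(\Bx))^{-M}(1+|\By|)^{-M}$; integrating the factor $(1+|\By|)^{-M}$ over $\R^3$ (finite for $M>3$) yields $|(g\star k)(\Bx)|\le C_M(1+d(\Bx))^{-M}$, which by the second comparison is at most $C_M'(1+|\Bx|)^{-M/2}$. Since $M$ is arbitrary, $g\star k$ decays faster than any polynomial, completing Step 2 and hence the proof.

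The step I expect to be the real obstacle is Step 2. Because the Heisenberg product is noncommutative and anisotropic, one cannot control the decay of $(g\star k)(\Bx)$ by Euclidean translation estimates: for $|\Bx|$ large there exist $\By$ with $|\By|$ far smaller than $|\Bx|$ yet $d(\By)$ of order $d(\Bx)$, and correspondingly $g\star k$ only decays at the ``$d$-scale'', which in the vertical direction is the square root of the Euclidean scale. The remedy is to run the whole estimate in terms of $d$ --- for which a genuine triangle inequality is available --- and to pass back to $|\cdot|$ only at the very end through $\sqrt{|\Bx|}\le C\,d(\Bx)\le C|\Bx|$ (for $|\Bx|\ge1$); this is precisely what makes the distance comparisons recalled before the lemma indispensable here. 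Everything else --- differentiating under the integral, the polynomial factors produced when $\partial_x,\partial_y,\partial_z$ hit $g(\Bx\bu\By)$, the identity $d(\By^{-1})=d(\By)$, and the bound $h(\Bx)\le C\exp(-c|\Bx|)$ read off from \eqref{esth1} --- is routine.
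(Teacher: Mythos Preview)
Your argument is correct, but it differs substantially from the paper's. The paper does not touch the convolution representation or the kernel estimate \eqref{esth1} at all. Instead it produces a direct barrier: with $R=(x^2+y^2)^2+z^2$ one checks that $L\big((1+R)^{-q}\big)\le B_q(1+R)^{-q}$, whence by the maximum principle $P_t\big((1+R)^{-q}\big)\le e^{B_qt}(1+R)^{-q}$. It then characterizes $\mathcal S$ as those smooth $f$ with $|\hat X^a\hat Y^bZ^c f|\le C_{a,b,c,q}(1+R)^{-q}$ for all $a,b,c,q$; since $\hat X,\hat Y,Z$ commute with $L$ and hence with $P_t$, one gets $|\hat X^a\hat Y^bZ^c P_tf|=|P_t(\hat X^a\hat Y^bZ^cf)|\le C_{a,b,c,q}\,e^{B_qt}(1+R)^{-q}$, and stability follows in one line.

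The trade-offs are worth noting. The paper's route is more elementary and self-contained: it needs only a short computation on $L(1+R)^{-q}$ and the commutation of $P_t$ with the right-invariant fields, and in particular it is logically independent of the deep Gaussian-type estimate \eqref{esth1} (which the paper imports from \cite{Gaveau2,HQLi,Li2}). Your route is the natural harmonic-analysis one---convolution of a Schwartz function against a rapidly decreasing kernel, with the Carnot--Carath\'eodory triangle inequality handling the anisotropy you correctly flag in Step~2---but it leans on \eqref{esth1} as a black box. Both give the result; the paper's proof is shorter and uses strictly less machinery, while yours makes explicit the convolution-algebra viewpoint and would transfer verbatim to any stratified group once the analogous kernel bound is known.
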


\begin{proof}
 If $ R=(x^2+ y^2)^2 + z^2$, then an elementary computation shows that for
 any positive integer $q$, there exists a real constant $B_q>0$ such that
 $L((1+R)^{-q}) \leq B_q(1+R)^{-q}$. As a consequence, $P_t ((1+R)^{-q}) \leq
 e^{B_q t} (1+R)^{-q}$. We may see the class $\mathcal{S}$ as the class of
 smooth functions such that for any non negative integers $a,b,c$ and any
 positive integer $q$, the function $\hat X ^a\hat Y^b Z^c (f)$ is bounded
 above by $(1+R)^{-q}$. From that and the above, it is clear that if $f$ is
 in $\mathcal{S}$, such is $P_t f$. On the other hand, the stability of
 $\mathcal{S}$ by $L$ is straightforward.
\end{proof}

\section{Reverse Poincar\'e inequalities}
\label{se:oip}  

We show here how to deduce a reverse Poincar\'e inequality as
\eqref{eq:RPI-from-DM}. The method is simple and direct, and does not rely on
a gradient bound such as \eqref{DM} or \eqref{HQLi}.

\begin{thm}[Reverse local Poincar\'e inequality]\label{th:RSG}
 For any $t\geq0$ and any $f\in\cC(\He)$,
 $$
 t\Ga(P_{t}f,P_{t}f) \leq P_{t}(f^{2})-(P_{t}f)^{2}.
 $$  
\end{thm}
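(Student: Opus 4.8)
The plan is to run the semigroup interpolation for the variance and to compensate the absence of a (finite) curvature lower bound by a correction in the vertical direction $Z$.

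\medskip
\noindent\textbf{Step 1: the variance identity.} Fix $t>0$ and $f$, and put $\varphi(s)=P_s\big((P_{t-s}f)^2\big)$ on $[0,t]$. Since $L(g^2)=2gLg+2\Gamma(g,g)$, one finds $\varphi'(s)=2P_s\big(\Gamma(P_{t-s}f,P_{t-s}f)\big)$, hence
\[
P_t(f^2)-(P_tf)^2=2\int_0^t P_s\big(\Gamma(P_{t-s}f,P_{t-s}f)\big)\,ds .
\]
So the statement is equivalent to $\int_0^t P_s(\Gamma(P_{t-s}f))\,ds\ge \tfrac t2\,\Gamma(P_tf)$, i.e. the average of $s\mapsto P_s(\Gamma(P_{t-s}f))$ over $[0,t]$ is at least half its value at $s=0$. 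In the elliptic ($\Gamma_2\ge0$) case this is trivial, because $\tfrac{d}{ds}P_s(\Gamma(P_{t-s}f))=2P_s(\Gamma_2(P_{t-s}f))\ge0$; here the displayed formula for $\Gamma_2$ shows it is unbounded below, the obstruction being the third–order, sign–indefinite term $2(XZf\,Yf-YZf\,Xf)$.

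\medskip
\noindent\textbf{Step 2: the vertical correction.} Since $[X,Z]=[Y,Z]=0$, the field $Z$ commutes with $L$ and with $P_s$, and one checks that the ``vertical iterated carré du champ'' $\Gamma_2^Z(f):=\tfrac12 L((Zf)^2)-Zf\,Z(Lf)$ equals $\Gamma(Zf,Zf)=(XZf)^2+(YZf)^2\ge0$; moreover $ZP_{t-s}f=P_{t-s}(Zf)$, so $\tfrac{d}{ds}P_s((ZP_{t-s}f)^2)=2P_s(\Gamma_2^Z(P_{t-s}f))$. I would therefore introduce, for smooth nonnegative weights $\alpha,\beta$ on $[0,t]$ with $\alpha(t)=\beta(t)=0$,
\[
\Phi(s)=P_s\big((P_{t-s}f)^2\big)+\alpha(s)\,P_s\big(\Gamma(P_{t-s}f)\big)+\beta(s)\,P_s\big((ZP_{t-s}f)^2\big),
\]
so that $\Phi(t)-\Phi(0)=P_t(f^2)-(P_tf)^2-\alpha(0)\Gamma(P_tf)-\beta(0)(ZP_tf)^2$; it then suffices to make $\Phi$ nondecreasing with $\alpha(0)=t$, for then $P_t(f^2)-(P_tf)^2\ge\alpha(0)\Gamma(P_tf)+\beta(0)(ZP_tf)^2\ge t\,\Gamma(P_tf)$.

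\medskip
\noindent\textbf{Step 3: the differential inequality.} Using the three identities above, $\Phi'(s)=(2+\alpha')P_s(\Gamma(\cdot))+2\alpha\,P_s(\Gamma_2(\cdot))+\beta'P_s((Z\cdot)^2)+2\beta\,P_s(\Gamma_2^Z(\cdot))$, all arguments being $P_{t-s}f$. Rewriting the displayed $\Gamma_2$ as $\Gamma_2(g)=\Gamma(Xg)+\Gamma(Yg)+2(XZg\,Yg-YZg\,Xg)$, completing the square in the blocks $\{2\beta(XZg)^2+4\alpha\,XZg\,Yg\}$ and $\{2\beta(YZg)^2-4\alpha\,YZg\,Xg\}$ so as to absorb the bad cross term into the nonnegative vertical term, and using $\Gamma(Xg)+\Gamma(Yg)\ge\tfrac12(Lg)^2+\tfrac12(Zg)^2$, yields a pointwise bound of the form
\[
\Phi'(s)\ \ge\ \Big(2+\alpha'-\tfrac{2\alpha^2}{\beta}\Big)P_s(\Gamma(\cdot))+\alpha\,P_s((L\cdot)^2)+(\alpha+\beta')\,P_s((Z\cdot)^2).
\]
One then picks $\alpha,\beta\ge0$ satisfying $\alpha+\beta'\ge0$ and $2+\alpha'-2\alpha^2/\beta\ge0$ with $\alpha(t)=\beta(t)=0$, which forces $\Phi'\ge0$ and closes the argument.

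\medskip
\noindent\textbf{The main obstacle.} Everything hinges on this last optimization: the term $2(XZf\,Yf-YZf\,Xf)$ — which is precisely what drives the Ricci lower bound to $-\infty$ — is third order and signless, so it can only be dominated at the expense of the $\Gamma$–term via the $Z$–correction, and one must simultaneously control the spurious terms produced by differentiating $\alpha,\beta$ while still reaching the sharp weight $\alpha(0)=t$ rather than the weaker constant coming out of a crude Cauchy–Schwarz. A useful reduction here is that, by left invariance of $\Gamma$ and $P_t$, the inequality need only be checked at the origin, where $X=\hat X$, $Y=\hat Y$ and hence $\Gamma(P_tf)(0)=\hat\Gamma(P_tf)(0)$; and for the right–invariant gradient $\hat\Gamma$ the iterated carré du champ with respect to $L$ is nonnegative (since $[L,\hat X]=[L,\hat Y]=0$), which restores some of the lost monotonicity.
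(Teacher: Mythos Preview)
Your scheme has a genuine gap at the optimization step. The system you impose on $(\alpha,\beta)$ --- namely $\alpha(t)=\beta(t)=0$, $\alpha,\beta\ge0$, $\alpha+\beta'\ge0$ and $2+\alpha'-2\alpha^2/\beta\ge0$ --- admits \emph{no} solution with $\alpha(0)=t$. Integrating $\beta'\ge-\alpha$ from $s$ to $t$ gives $\beta(s)\le\int_s^t\alpha$, while the last inequality forces $\beta(s)\ge 2\alpha(s)^2/(2+\alpha'(s))$; solving the extremal system (both constraints saturated: $\beta'=-\alpha$, $\alpha'=2-2\alpha^2/\beta$) produces the scaling solution $\alpha(s)=\tfrac{2}{5}(t-s)$, $\beta(s)=\tfrac{1}{5}(t-s)^2$, so the best your interpolation can yield is
\[
\frac{2t}{5}\,\Gamma(P_tf,P_tf)\ \le\ P_t(f^2)-(P_tf)^2,
\]
off by a factor $5/2$ from the sharp statement. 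The obstruction is structural: the only quadratic reservoir in the third--order quantities $XZg,YZg$ is $\Gamma_2^Z$, so absorbing the signless cross term costs exactly $\tfrac{2\alpha^2}{\beta}\Gamma(g)$, and no rebalancing of $(\alpha,\beta)$ closes the gap.

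The paper's argument is of an entirely different nature and does not pass through $\Gamma_2$. After reducing to $\Bx=0$, $t=1$, it writes $\hat X P_1 f(0)=-\langle f,\hat Xh\rangle$ (here your observation $[L,\hat X]=0$ is precisely what is used), subtracts the mean, and applies Cauchy--Schwarz in $L^2(h\,d\Bx)$. This gives the reverse Poincar\'e with constant $\langle\hat\Gamma(\log h,\log h),h\rangle=\langle\Gamma(\log h,\log h),h\rangle$, the equality coming from the rotational invariance of $h$. The crux is then the \emph{exact} evaluation $\langle\Gamma(\log h,\log h),h\rangle=2$: one multiplies the identity $(L+D+2)h=0$ --- which encodes the dilation relation $[L,D]=L$ --- by $\log h$ and integrates by parts. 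A second use of rotational symmetry (Cauchy--Schwarz applied to the single direction $a\hat X+b\hat Y$ before summing) halves the constant from $2$ to $1$, and this is sharp: equality holds for $f=\hat X(\log h)$. So your closing remark about the right--invariant fields is the correct entry point, but it has to be exploited through this integration--by--parts / Fisher--information computation at the level of the density $h$, not through a $\Gamma_2$--monotonicity along the semigroup.
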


\begin{proof}
 Since we work on a group, it is enough to prove this for $\Bx=0$. Then,
 thanks to the dilation properties, it is enough to prove it for $t=1$. Now,
 consider the vector field $\hat X$, which coincides with $X$ at $\Bx=0$, and
 let as before $h$ be the density of the heat kernel for $t=1$ and $\Bx=0$
 given by \eqref{eq:h}. We want to bound, for a smooth compactly supported
 function $f$
 $$
 (\hat X P_{1}f)^{2}= (P_{1}(\hat X f))^{2}= \lag f, \hat X h\rag^{2}
 $$
 where the last identity comes from integration by parts. The first remark is
 that we may suppose that $\lag f h\rag=0$ since we may always add any
 constant to $f$. We then use Cauchy-Schwartz inequality under the measure
 $h(\Bx) d\Bx$ to get
 $$
 (\hat X P_{1}f)^{2}\leq P_{1}(f^{2})\lag \hat X(\log h)^{2}h\rag.
 $$ 
 Using the same method for $Y$, we get
 $$
 \Ga(P_{1}f, P_{1}f) \leq (P_{1}(f^{2})-(P_{1}f)^{2}) \lag \hat 
 \Ga(\log h,\log h) h\rag
 $$
 where
 $$
 \hat \Ga(u,u)= \hat X(u)^{2}+ \hat Y (u) ^{2}.
 $$
 Now, the rotational invariance of $h$, which comes from $[\Theta,L]=0$,
 shows that $\hat \Ga(\log h)= \Ga(\log h)$, and gives a reverse local
 Poincar\'e inequality with the constant $C= \lag \Ga(\log h) h\rag$. It
 remains to compute this constant. For that, we use the dilation operator $D$
 and the formula \eqref{Dil3}. In $0$, we have $Df=0$, and therefore it reads
 for $t=1$, for any $f$,
 $$
 P_{1}((L-D)f)=0,
 $$ which means that $h$ is the invariant measure 
 for the operator $L-D$, or in other words that
 $$
 (L+D+2)h=0
 $$
 since the adjoint of $D$ is $-D-2$. Multiply both sides by $\log h$ and using
 integration by parts (can be rigorously justified by using estimates on $h$)
 gives
 $$
 \lag \log h, Lh\rag = -\lag \Ga(\log h ,\log h) ,h\rag.
 $$
 Moreover, we have
 $$
 \lag \log h ,(D+2)h\rag %
 = -\lag h, D\log h\rag %
 = -\lag D h\rag= 2\lag h \rag %
 =2
 $$
 and therefore
 $$
 \lag \Ga(\log h, \log h) ,h\rag = 2.
 $$  
 If we had done the same reasoning on $\R^{n}$ with the usual Laplace
 operator, and the corresponding dilation operator, we would have find a
 reverse Poincar\'e inequality with constant $n/2$ instead of $\frac{1}{2}$.
 The reason comes from symmetry properties and the same will allow us to
 divide the constant by 2 in the Heisenberg case. In fact, because of the
 rotational invariance of $h$, we have, for any vector $(a,b)\in\R^{2}$ with
 $a^2+b^2=1$,
 $$
 \lag (a\hat X(\log h)+ b\hat Y(\log h))^{2},h\rag %
 = \lag (\hat X(\log h))^{2},h\rag %
 = \frac{1}{2}\lag \hat \Ga(\log h, \log h) ,h\rag %
 = 1.
 $$
 Now, we may write
 $$
 \Ga(P_{1}f, P_{1}f)(0) %
 = \sup_{a^2+b^2=1} %
 (a\hat X P_{1}(f) + b\hat Y P_{1}(f))^2
 $$
 and use the same Cauchy-Schwarz inequality to improve the bound to
 $$
 \Ga(P_{1}f,P_{1}f) \leq P_{1}(f^{2})-(P_{1}f)^{2}.
 $$
\end{proof}

\begin{rmk}[Optimal constants]
 Equality is achieved in theorem \eqref{th:RSG} when $f = \hat X(\log h)$ for
 instance. To see it, note that by symmetry, $\lag (\hat X\log h)^2 h\rag =
 \lag (\hat Y\log h)^2 h\rag = 1$. By the rotational invariance of $h$, we
 get more generally that $\lag (a\hat X\log h+ b\hat Y\log h)^2 h\rag =1$ for
 any $a^2+ b^2=1$. As a consequence, $\lag \hat X(\log h)\hat Y(\log
 h),h\rag =0$. For $f = \hat X(\log h)$, this gives $XP_1f (0)= \lag \hat X
 f, h\rag= -\lag f, \hat X( \log h) h \rag = -1$ and $YP_1 f =0$, which is
 the desired equality. Note the difference with the elliptic case: for the
 heat semigroup $(P_t)_{t\geq0}$ in $\R^{n}$ or for any manifold with non
 negative Ricci curvature, one has for every $t\geq0$ and any smooth $f$,
 $$
 2t\Ga(P_{t} f,P_{t}f) \leq P_{t}(f^{2})-(P_{t}f)^{2}.
 $$
 Note also, as we already mentioned in the introduction, that the H.-Q. Li
 gradient bound \eqref{HQLi} provides simply by semigroup interpolation a
 result similar to theorem \ref{th:RSG}, with a constant $2tC_1^{-2}$ instead
 of $t$. These two constants are equal if and only if $C_1=\sqrt{2}$. At the
 level of reverse local Poincar\'e inequalities, a necessary and sufficient
 condition for the efficiency of the semigroup interpolation technique is
 that $C_1=\sqrt{2}$. Similarly, by using the Drivier and Melcher gradient
 bound \eqref{DM} we obtain the condition $C_2=2$. It is thus tempting to
 conjecture that $C_1^2=C_2=2$.
\end{rmk}

\begin{rmk}[Carnot groups]
 In the class of nilpotent groups, there is an interesting subclass, which
 are the Carnot groups, that is the nilpotent groups with dilations, see \cite{baudoin}, 
 \cite{gromov}. Let $(X_{1},\ldots,X_{n_{0}})$ the generators at the first
 level of such a Carnot group, and $L= X_{1}^{2}+ \cdots+ X_{n_{0}}^{2}$. In
 those groups again there is a dilation operator $D$ such that $[L,D]=L$ and
 $D ^{*}= -D-\frac{n}{2}Id$. The parameter $n$ is what is called the
 homogeneous dimension of the group. The same applies in this case, except
 that we no longer have always enough rotations to insure that $\Ga(h,h)=
 \hat \Ga(h,h)$ where the hat corresponds to the ``chiral'' action. But we
 may replace this argument by the fact that $\hat P_{t}(\cdot)(0)=
 P_{t}(\cdot)(0)$ where $(\hat P_{t})_{t\geq0} = (\exp(t \hat L))_{t\geq0}$
 and we would get as a bound
 $$
 \Ga(P_{t}f,P_{t}f) \leq \frac{n}{2n_{0}t} (P_{t}(f^{2})-(P_{t}f)^{2})
 $$
 recovering at the same time the Heisenberg and Euclidean cases. For the
 $(2p+1)$-dimensional Heisenberg group $\He_{2p+1}$ we have $n_{0}= 2p$ while
 the homogeneous dimension is $2p+2$, and therefore here the inequality
 writes
 $$
 \Ga(P_{t}f,P_{t}f) \leq \frac{p+1}{2pt}( P_{t}(f^{2})-(P_{t}f)^{2})
 $$ 
 and the constant approaches the Euclidean one when $p$ goes to infinity.
\end{rmk}

\section{A proof of the Driver-Melcher inequality}
\label{se:DM}

We give here an elementary proof of the Driver and Melcher gradient bound
\eqref{DM}. The argument is simply an integration by parts followed by the
upper bound on $\Ga(\log h, \log h)$ obtained in section \ref{se:oip}. Indeed,
from the inequalities \eqref{esth1} and \eqref{esth2}, it is quite clear that
the constant
\begin{equation} \label{defA} %
 A= \int \Vert\Bx\Vert\Ga(\log h, \log h)(\Bx) h(\Bx) d\Bx
\end{equation} 
is finite, where $\Vert\Bx\Vert$ denotes as usual the Euclidean norm of the
horizontal projection of the point $\Bx$. Then, we have the following theorem.

\begin{thm}\label{th:DMA}
 With the constant $A$ defined by \eqref{defA}, we have for every $t\geq0$
 and $f\in\cC(\He)$,
 $$
 \Ga(P_{t} f, P_{t}f) \leq 2(A+4) P_{t}(\Ga(f,f)).
 $$
\end{thm}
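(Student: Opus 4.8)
The plan is to mimic the structure of the proof of Theorem~\ref{th:RSG}: reduce to $\Bx=0$ and $t=1$ by left-invariance and the dilation relation \eqref{Dil2}, then write the quantity to be bounded as a scalar product against the heat kernel $h$ and estimate it by Cauchy--Schwarz under the probability measure $h(\By)\,d\By$. Concretely, at $\Bx=0$ the left-invariant fields $X,Y$ agree with the right-invariant $\hat X,\hat Y$, so
\begin{align*}
\hat X P_1 f(0) = P_1(\hat X f)(0) = \lag \hat X f, h\rag = -\lag f, \hat X h\rag = -\lag f - c,\, (\hat X\log h)\, h\rag
\end{align*}
for any constant $c$, using integration by parts (justified by the decay estimates \eqref{esth1}) and the freedom to recenter $f$.

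The difference with Theorem~\ref{th:RSG} is that we want $P_t(\Ga(f,f))$ on the right-hand side rather than the variance $P_t(f^2)-(P_tf)^2$. So after recentering we should \emph{not} throw away the information in $f$ but instead compare $f(\By)-f(0)$ (or $f(\By)$ minus its average) to the intrinsic distance. The key elementary fact is a ``Poincar\'e along geodesics'' type bound: for smooth $f$,
\begin{align*}
\left(f(\By)-f(0)\right)^2 \leq d(\By)^2 \int_0^1 \Ga(f,f)(\By(s))\,ds
\end{align*}
where $\By(s)$ is a constant-speed geodesic from $0$ to $\By$; this follows from Cauchy--Schwarz applied to $f(\By)-f(0)=\int_0^1 \frac{d}{ds}f(\By(s))\,ds$ together with $\left(\frac{d}{ds}f(\By(s))\right)^2\leq d(\By)^2\,\Ga(f,f)(\By(s))$, since the geodesic's tangent vector is a unit-speed horizontal vector scaled by $d(\By)$. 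Combining this with the Cauchy--Schwarz step in the $h$-measure gives
\begin{align*}
(\hat X P_1 f(0))^2 &\leq \lag (f-f(0))^2,\, (\hat X\log h)^2\, h\rag \\
&\leq \int (\hat X\log h)^2(\By)\, d(\By)^2 \left(\int_0^1 \Ga(f,f)(\By(s))\,ds\right) h(\By)\,d\By,
\end{align*}
and then Fubini together with the bound $d(\By)^2 \simeq \Vert\By\Vert d(\By)$ (the ratio remark after \eqref{eq:geodesics}, valid where $\Vert\By\Vert d(\By)\geq 1$, with the bounded region handled separately) should bring in the constant $A$ from \eqref{defA} — up to controlling the measure pushed forward along geodesics by $P_1$, or more simply by using that $h$ is the heat kernel so that $\int (\int_0^1 \Ga(f,f)(\By(s))ds)\,(\text{weight})\,d\By$ is comparable to $P_1(\Ga(f,f))(0)$ when the weight is controlled by $h$. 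Adding the $X$ and $Y$ contributions and using rotational invariance of $h$ ($\hat\Ga(\log h,\log h)=\Ga(\log h,\log h)$) as in Theorem~\ref{th:RSG} yields the constant $2(A+4)$, the ``$+4$'' absorbing the contribution of the bounded region near the cut-locus/origin where the distance comparison degenerates and the ``$2$'' coming from the same rotational-symmetry halving used before (here going the other way, since we split rather than combine).

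The main obstacle I anticipate is making rigorous the step that turns $\int_0^1 \Ga(f,f)(\By(s))\,ds$ integrated against $(\hat X\log h)^2 d(\By)^2 h(\By)$ into something bounded by a constant times $P_1(\Ga(f,f))(0)=\lag \Ga(f,f), h\rag$. This requires either an explicit change of variables along the geodesic flow (whose Jacobian must be controlled using \eqref{eq:geodesics}) or a clean domination of the resulting measure on $\He$ by $C\,h(\By)\,d\By$, using the Gaussian-type lower bound implicit in \eqref{esth1} to compare the weight at $\By(s)$ (which is closer to $0$, hence where $h$ is larger) with the weight at $\By$. I would also need to check the integrability of $\Vert\By\Vert\,\Ga(\log h,\log h)(\By)\,h(\By)$, which is exactly the finiteness of $A$ already asserted from \eqref{esth1}--\eqref{esth2}, and to treat the near-origin region (where $d(\By)\lesssim 1$ and $\Vert\By\Vert d(\By)\lesssim 1$) by a crude bound $(f(\By)-f(0))^2 \lesssim \int_0^1\Ga(f,f)(\By(s))ds$ with an absolute constant, contributing the additive piece. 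An alternative that sidesteps the geodesic change of variables entirely: use instead $(f(\By)-\lag f,h\rag)^2$ and the $\mathrm{L}^2(h)$ Poincar\'e inequality for $P_1$ combined with the pointwise gradient estimate, but that reintroduces a variance term and does not obviously give $P_t(\Ga(f,f))$ on the right; so the geodesic route seems unavoidable and is where the real work lies.
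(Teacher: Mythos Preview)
Your route has a genuine gap, and it is not the paper's route.

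You begin as in Theorem~\ref{th:RSG}, writing $\hat X P_1 f(0)=-\lag f-c,(\hat X\log h)h\rag$, and then try to recover $P_1(\Ga(f,f))$ from the variance-type right-hand side by bounding $(f(\By)-f(0))^2$ via integration of $\Ga(f,f)$ along a geodesic. The obstacle you flag is real and, as stated, fatal: after Fubini you obtain $\int \Ga(f,f)(\Bp)\,\nu(d\Bp)$ where $\nu$ is the pushforward under $(\By,s)\mapsto \By(s)$ of the measure $(\hat X\log h)^2(\By)\,d(\By)^2\,h(\By)\,d\By\,ds$. To conclude you need $\nu\leq C\,h(\Bp)\,d\Bp$ with $C$ controlled by $A$; this requires precise control of the Jacobian of the geodesic flow \emph{together} with two-sided heat-kernel bounds, and nothing in the paper supplies it. Your claimed identification $d(\By)^2\simeq\Vert\By\Vert\,d(\By)$ is also false near the $z$-axis, where $\Vert\By\Vert=0$ while $d(\By)>0$, so the constant $A$ of \eqref{defA} is not what your weight produces. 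Finally, the explanation you offer for the ``$+4$'' and the ``$2$'' does not match anything your argument actually generates.

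The paper's proof bypasses geodesics entirely by an algebraic trick you are missing. One writes $\hat X=X+yZ$ and uses the commutator identity $Z=XY-YX$ to integrate $\lag yZ(f),h\rag=\lag y(XY-YX)f,h\rag$ by parts; this produces only $Xf$ and $Yf$ paired against explicit weights of the form $(yY(\log h)+2)$, $yX(\log h)$, etc. A single Cauchy--Schwarz in $L^2(h)$ then gives $P_1((Xf)^2)$ and $P_1((Yf)^2)$---hence $P_1(\Ga(f,f))$---multiplied by integrals of these explicit weights squared against $h$, which are bounded by $2(A+4)$. No geodesic change of variables, no pushforward comparison, no distance estimates beyond the finiteness of $A$ are needed.
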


\begin{proof} 
 We assume that $\Bx =0$ (by group action) and $t=1$ (by dilation). Then, we
 write
 $$
 XP_{1}f(0)= P_{1}(\hat X f)(0)= \lag (X+yZ)f h\rag.
 $$
 An integration by parts for $\lag yZ(f),h\rag= \lag y(XY-YX)(f) ,h\rag$
 gives
 $$
 \lag X(f),(yY(\log h) + 1) h\rag -\lag Y(f), yX(\log h) h\rag
 $$
 and a similar formula holds for $YP_{1}f$. Next, we take a vector
 $(a,b)\in\R^{2}$ of unit norm and we use the Cauchy-Schwarz inequality to
 get
 $$
 (aXP_{1}(f)(0)+ bYP_{1}(f)(0))^{2} %
 \leq P_{1}(X(f)^{2})A_{1}+ P_{1}(Y(f)^{2})A_{2}
 $$
 where 
 $$
 A_{1}= P_{1}[((yY(\log h)+2)a-xY(\log h)b)^{2}]
 $$ 
 and 
 $$
 A_{2}= P_{1}[((xX(\log h)+2)b-yY(\log h)a)^{2}].
 $$
 The desired inequality comes then from the upper bound
 $$
 \max(A_{1}, A_{2})\leq A_{1}+A_{2} \leq 2(A+4).
 $$
 Note that the obtained constant $2(A+4)$ is certainly not the optimal one.
\end{proof}

\begin{rmk}[Counter example]
 Unlike the elliptic case, the reverse local Poincar\'e
 \eqref{eq:RPI-from-DM} and the local Poincar\'e \eqref{eq:PI-from-DM}
 inequalities are not in general equivalent. A simple example is provided on
 $\R^{2}$ with the operator
 $$
 L = \partial_{x} ^{2} + x \partial_{y}
 $$
 for which the corresponding diffusion process starting from $(x,y)$ is up to
 some constant
 $$
 U_{t} = \left(x+B_{t},\ y+tx+ \int_{0}^{t}\!\!B_{s} ds\right)
 $$
 where $(B_{t})_{t\geq0}$ is a Brownian motion on $\R$.
 In this example, the heat kernel is Gaussian and the semigroup
 $(P_{t})_{t\geq0}$ is quite easy to compute, while $\Ga(f,f) = (\partial_{x}
 f)^{2}$. In this situation, it is easy to see, using Cauchy-Schwarz
 inequality, that
 $$
 tC\Ga(P_{t} f,P_{t} f) \leq P_{t}(f^{2})-(P_{t}f)^{2}
 $$  
 for some constant $C<2$, while the inequality
 $$
 P_{t} f^{2} -(P_{t}f)^{2} \leq C(t) P_{t}\Ga(f,f)
 $$
 does not hold for any constant $C(t)$, as one may see with a function $f$
 depending on $y$ only. For example, with $f(x,y)=y$, one has $P_{t}f =
 y+tx$, which depends on the variable $x$. This kind of hypoelliptic
 situation differs strongly from the case of the Heisenberg group, since here
 $\Ga(f,f)=0$ does not imply that $f$ is constant.
 %
 %
 %
 %
 %
 %
 %
 %
 %
 %
\end{rmk}

\section{Two proofs of the H.-Q. Li inequality}
\label{se:HQLi}

In this section, we propose two alternate and independent proofs of the H.-Q. Li inequality
\eqref{HQLi}.  The first proof uses some basic symmetry considerations and  a
particular case of the Cheeger inequality of theorem \ref{th: cheeger2} that we have to show by hands.
 The second proof relies on an explicit commutation between the complex gradient and the heat semigroup. Both mainly rely on the previous sharp estimates on the heat kernel that were obtained in \cite{Gaveau2}.

\subsection{Via a Cheeger type inequality}

\begin{lem}\label{le:cheeger}
 For any real $R>0$, there exists a real constant $C>0$ such that for any
 smooth $f:\He\to\R$ which vanishes on the ball centered at $0$ and of radius
 $R$ for the Carnot-Carath\'eodory distance, we have
 $$
 \int \left|f\right| h d\Bx \leq C \int \left|\nabla f\right| h 
 d\Bx
 $$ 
 where $h$ is as before the density of $P_{1}(0, d\Bx)$.
\end{lem}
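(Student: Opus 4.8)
The plan is to use the Carnot--Carath\'eodory geodesics issued from $0$ as a system of polar coordinates and to reduce the inequality to a one--dimensional estimate along each geodesic ray.

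\emph{Reduction via geodesics.} Fix $\Bx\in\He$ with $d(\Bx)>R$ and let $\gamma_\Bx\colon[0,d(\Bx)]\to\He$ be a unit speed minimizing geodesic from $0$ to $\Bx$. Since $\gamma_\Bx$ is horizontal, $\big|\tfrac{d}{d\tau}(f\circ\gamma_\Bx)(\tau)\big|\le|\nabla f|(\gamma_\Bx(\tau))$, and since $f$ vanishes on the ball of radius $R$ one has $f(\gamma_\Bx(\tau))=0$ for $\tau\le R$, whence $|f(\Bx)|\le\int_R^{d(\Bx)}|\nabla f|(\gamma_\Bx(\tau))\,d\tau$. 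As $f$ vanishes on the whole ball, multiplying by $h$ and integrating gives
$$
\int|f|\,h\,d\Bx\;\le\;\int_{\{d(\Bx)>R\}}\Big(\int_R^{d(\Bx)}|\nabla f|(\gamma_\Bx(\tau))\,d\tau\Big)h(\Bx)\,d\Bx .
$$
Now I pass to geodesic polar coordinates. By the description \eqref{eq:geodesics} (together with its mirror image for negative last coordinate), the map $(u,s)\mapsto\gamma_u(s)$ with $0<s<2\pi|u|$ is a diffeomorphism onto $\He$ minus the $z$--axis; the $z$--axis is the cut locus of $0$, hence Lebesgue--negligible, and $d(\gamma_u(s))=s$ while $\gamma_\Bx$ is simply $\gamma_u|_{[0,s]}$ when $\Bx=\gamma_u(s)$. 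A direct computation from \eqref{eq:geodesics} yields $d\Bx=\mathcal J(u,s)\,ds\,du$ with
$$
\mathcal J(u,s)=|u|\Big(2-2\cos\tfrac{s}{|u|}-\tfrac{s}{|u|}\sin\tfrac{s}{|u|}\Big),
$$
which is positive for $0<s<2\pi|u|$ and vanishes to first order at the cut time $s=2\pi|u|$. Substituting and applying Fubini's theorem on $\{R<\tau<s<2\pi|u|\}$, the right--hand side above equals
$$
\int_{|u|>R/(2\pi)}\int_R^{2\pi|u|}|\nabla f|(\gamma_u(\tau))\Big(\int_\tau^{2\pi|u|}h(\gamma_u(s))\,\mathcal J(u,s)\,ds\Big)d\tau\,du .
$$

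\emph{The one--dimensional estimate.} It now suffices to show that for some $C=C(R)$ not depending on $u$,
$$
\int_\tau^{2\pi|u|}h(\gamma_u(s))\,\mathcal J(u,s)\,ds\;\le\;C\,h(\gamma_u(\tau))\,\mathcal J(u,\tau)\qquad(\tau\ge R);
$$
inserting this and reversing the change of variables bounds the last displayed integral by $C\int_{\{d>R\}}|\nabla f|\,h\,d\Bx\le C\int|\nabla f|\,h\,d\Bx$, which is the lemma. To prove this one--dimensional bound I would use the sharp estimate \eqref{esth1}: along the geodesic $d(\gamma_u(s))=s$ and, by \eqref{eq:geodesics}, the Euclidean norm of its horizontal projection is $\|\gamma_u(s)\|=2|u|\sin\tfrac{s}{2|u|}$, so
$$
h(\gamma_u(s))\;\simeq\;\frac{e^{-s^{2}/4}}{\sqrt{1+2|u|\,s\,\sin\frac{s}{2|u|}}} .
$$
The prefactor $|u|$ in $\mathcal J$ cancels on the two sides, and the Gaussian $e^{-s^{2}/4}$ does the rest: since $\tau$ stays bounded below by $R>0$, one has a tail estimate of the form $\int_\tau^\infty e^{-s^{2}/4}\varphi(s)\,ds\le C\,\tau^{-1}e^{-\tau^{2}/4}\varphi(\tau)$ for the slowly varying factors occurring here.

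\emph{Main obstacle.} The delicate point is the behaviour at the two ends of the geodesic and the uniformity in $|u|$. Near the cut time $\mathcal J(u,\tau)$ degenerates linearly; but there $e^{-\tau^{2}/4}$ is of order $e^{-\pi^{2}|u|^{2}}$, and in the only regime in which the integrand is essentially constant on $(\tau,2\pi|u|)$ one has $2\pi|u|-\tau$ at most of order $1/|u|$, so the quotient stays bounded. In the opposite regime $s\ll|u|$ one has $\mathcal J(u,s)\simeq s^{4}/(12|u|^{3})$ and $|u|\,s\,\sin\frac{s}{2|u|}\simeq s^{2}/2$, so the extra powers of $|u|$ again cancel between the two sides. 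Splitting $(\tau,2\pi|u|)$ into a bounded number of such regimes in the ratio $s/|u|$, uniformly in $|u|>R/(2\pi)$, gives the one--dimensional estimate with a constant depending only on $R$, and hence the lemma. (One could instead look for a bounded horizontal field $V$ with $-\mathrm{div}(hV)\ge c\,h$ outside the ball and integrate by parts; this however seems to need sharper information on $\nabla\log h$ than \eqref{esth1}--\eqref{esth3} provide, which is why I would follow the geodesic route.)
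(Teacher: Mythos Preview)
Your approach is essentially identical to the paper's: geodesic polar coordinates $(u,s)$, the bound $|f|\le\int_R^{s}|\nabla f|$ along each ray, Fubini, and reduction via \eqref{esth1} to the one--dimensional tail estimate $\int_\tau^{2\pi|u|}h\,\mathcal J\,ds\le C\,h(\tau)\,\mathcal J(\tau)$. The paper carries out this last step more explicitly by substituting $\psi=s/(2|u|)$, $r=|u|$ and splitting at $\psi=\pi/2$, which makes precise the regime analysis you sketch in your final paragraph; your Jacobian $\mathcal J$ and the paper's $A$ differ in form but their ratio is bounded above and below on $(0,2\pi|u|)$, so this is immaterial.
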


\begin{proof}
  One may safely assume that $R=1$ by a simple scaling. Next, we make use of
  the polar coordinates which appear in (\ref{eq:geodesics}). Namely, we
  parameterize the exterior of the unit ball by $(u, s)$, with $u\in \C,
  ~\left|u\right|\geq \frac{1}{2\pi}$ and $s\in (1, 2\pi\left|u\right|)$, with
  \begin{equation}
    (x+iy, z) %
    = \left( u\left(1- \exp\left(\frac{is}{\left|u\right|}\right)\right),
      \frac{\left|u\right|^{2}}{2}\left(\frac{s}{\left|u\right|}-\sin\left(\frac{s}{\left|u\right|}\right)\right)\right).
  \end{equation}
  The unit ball is the set $\{s\leq 1\}$, and since $f$ is supported outside
  the unit ball, we write
  $$
  \left|f(u,s)\right| %
  = \left|\int_{1}^{s} \nabla f(u,t)\cdot e_{t} dt \right| %
  \leq \int_{1}^{s} \left|\nabla f\right|(u,t) dt
  $$
  where $e_{t}$ is the unit vector along the geodesic. Let us write $A(u,t) du
  dt$ the Lebesgue measure on $\R^{3}$ in those coordinates (we shall see the
  precise formula below). We write
  $$
  \int \left|f(u,s)\right|h(u,s) A(u,s) du ds %
  \leq \int \left|\nabla f\right|(u,t)\left(\int_{t}^{2\pi \left|u\right|}A(u,s)
    h(u,s) ds\right) du dt
  $$
  and we shall have proved our inequality when we have proved that
  $$
  \int _{t}^{2\pi\left|u\right|} A(u,s)h(u,s) ds \leq C A(u,t)h(u,t),
  $$
  for any $(u,t)$ such that $\left|u\right|\geq \frac{1}{2\pi}$ and $t\geq 1$.
  In this computation, we forget the points in the $(x,y)$ plane and the
  $z$-axis, but this is irrelevant since they have $0$-measure. The
  computation of the Jacobian gives
  $$
  A(u,s)= 16\left|u\right|\sin
  \left(\frac{s}{2\left|u\right|}\right)\left(\frac{s}{2\left|u\right|}-\sin\left(\frac{s}{2\left|u\right|}\right)\right)
  $$
  and the estimate (\ref{esth1}) shows that we may replace $h(u,s)$ by
  $$
  \frac{\exp(-\frac{s^{2}}{4})}{\sqrt{1+2s\left|u\right|\sin(\frac{s}{2\left|u\right|})}}
  $$ 
  since the Euclidean norm of the horizontal projection of the point whose
  coordinates are $(u,s)$ is $2\left|u\right|\sin(\frac{s}{2\left|u\right|})$.
  Setting $\tau= \frac{s}{2\left|u\right|}$ and $r= \left|u\right|$, the
  question is therefore to check that, for some constant $C $, for any $r\geq
  \frac{1}{2\pi}$, for any $\tau_{0}\geq \frac{1}{2r}$, one has
  $$
  r\int_{\tau_{0}}^{\pi} \frac{\sin\tau(\tau-\sin\tau)}{\sqrt{1+4r^{2}\tau\sin
      \tau}}e^{-\tau^{2}r^{2}} d\tau %
  \leq C\frac{\sin\tau_{0}(\tau_{0}-\sin\tau_{0})}{\sqrt{1+4r^{2}\tau_{0}\sin
      \tau_{0}}}e^{-\tau_{0}^{2}r^{2}}.
  $$
  Up to some constant, we may replace $\sin\tau(\tau-\sin \tau)$ by $\tau^{4}$
  on $(0, \frac{\pi}{2})$ and by $\pi-\tau$ on $(\frac{\pi}{2}, \pi)$. In the
  same way, we may replace $\sqrt{1+4r^{2}\tau\sin \tau}$ by $r\tau$ when
  $\tau< \frac{\pi}{2}$ (since $r\tau \geq\frac{ 1}{2}$) and by $1+
  r\sqrt{\pi-\tau}$ when $\tau\in (\frac{\pi}{2}, \pi)$.
  
  We first consider the case where $\tau_{0}<\frac{\pi}{2}$, and divide the
  integral into $\int_{\tau_{0}}^{\pi/2}$ and $\int_{\pi/2}^{\pi}$. Using the
  above estimates, these integrals can be bounded by the correct term by using
  the fact that
  $$
  \int_{A}^{\infty} s^{p}\exp(-s^{2}) ds \leq C_{p} A^{p-1}\exp(-A^{2}).
  $$ 
  When $\tau_{0}> \frac{\pi}{2}$, one uses the same estimates, bounding above
  $\pi-\tau$ by $\pi-\tau_{0}$ and $(1+ r\sqrt{\pi-\tau})^{-1}$ by 1 in the
  integral, and using the fact that $r$ is bounded below on our domain.
  
  Observe that the same reasoning on a ball of radius $\epsilon$ would provide
  a constant which goes to infinity when $\epsilon$ goes to $0$, as for the
  usual heat kernel on $\R^{d}$.
\end{proof}

In fact, we shall also use a slightly improved version of lemma
\ref{le:cheeger}.

\begin{lem}\label{le:cheeger-bis}
 For every real $R>0$, if $B$ is the ball centered at $0$ and of radius $R$
 for the Carnot-Carath\'eodory distance, there exists a real constant $C>0$
 such that for any smooth $f:\He\to\R$,
 $$
 \int_{B^c}\left|f-m\right|\,hd\Bx %
 \leq C \int \left|\nabla f\right| h d\Bx
 $$ 
 where $B^c=\He\setminus B$ is the complement of $B$, where
 $m=|B|^{-1}\int_B\!f(x)\,d\Bx$ is the mean of $f$ on $B$, and where $h$ is as
 before the density of $P_{1}(0, d\Bx)$.
\end{lem}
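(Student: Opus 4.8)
The plan is to deduce the improved inequality from Lemma \ref{le:cheeger} by a truncation-and-telescoping argument over dilated copies of the ball $B$. First I would reduce to $R=1$ by scaling, exactly as in the proof of Lemma \ref{le:cheeger}. The difficulty compared to that lemma is that $f$ need no longer vanish on $B$; the point of subtracting the mean $m$ is that $f-m$ has vanishing average on $B$, so one gains a Poincar\'e-type control of $f-m$ on $B$ itself, which can then be glued to the exterior estimate. Concretely, write $g = f-m$. On $B^c$ we want $\int_{B^c}|g|\,h\,d\Bx \le C\int|\nabla g|\,h\,d\Bx = C\int|\nabla f|\,h\,d\Bx$.

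The key steps, in order. (1) Fix a smooth cutoff $\chi$ with $\chi\equiv 0$ on $B$ and $\chi\equiv 1$ on the complement of the ball $2B$ of radius $2$, with $|\nabla\chi|\le C$ (such a Lipschitz-in-$d$ function exists and has bounded horizontal gradient since $\Ga(d,d)\le 1$). Apply Lemma \ref{le:cheeger} to $\chi g$, which does vanish on $B$: this gives $\int|\chi g|\,h \le C\int|\nabla(\chi g)|\,h \le C\int_{2B\setminus B}|g|\,|\nabla\chi|\,h + C\int|\chi|\,|\nabla g|\,h$. The second term is bounded by $C\int|\nabla f|\,h$. Since $\chi\equiv 1$ on $(2B)^c$, the left side dominates $\int_{(2B)^c}|g|\,h$, so it remains to absorb the annular term $\int_{2B\setminus B}|g|\,h$ and then add back the piece $\int_{2B\setminus B}|g|\,h$ (i.e. to handle all of $B^c\cap 2B$ as well). (2) Bound $\int_{2B\setminus B}|g|\,h$ by $\int_{2B}|f-m|\,h\,d\Bx$. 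Since $h$ is bounded above and below by positive constants on the fixed compact set $2B$ (it is continuous and strictly positive there), this is comparable to the unweighted integral $\int_{2B}|f-m|\,d\Bx$, which by the $L^1$ Poincar\'e inequality on the bounded John/Carnot domain $2B$ (equivalently, by the classical Poincar\'e inequality on $B$ together with $\int_{2B}|f-m_{2B}|$ controlled by $\int_{2B}|\nabla f|$ and $|m-m_{2B}|\le C\int_{2B}|\nabla f|$) is bounded by $C\int_{2B}|\nabla f|\,d\Bx \le C\int|\nabla f|\,h\,d\Bx$, again using that $h\simeq 1$ on $2B$. (3) Combine: $\int_{B^c}|g|\,h = \int_{(2B)^c}|g|\,h + \int_{2B\setminus B}|g|\,h \le C\int|\nabla f|\,h\,d\Bx$, which is the claim.

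The main obstacle is step (2): one needs a Poincar\'e inequality of $L^1$ type on the Carnot--Carath\'eodory ball $2B$, i.e. $\int_{2B}|f - m_{2B}|\,d\Bx \le C\int_{2B}|\nabla f|\,d\Bx$ with $\nabla$ the horizontal gradient. This is a standard fact for metric balls in Carnot groups (they are John domains, and the sub-Riemannian $(1,1)$-Poincar\'e inequality holds), but if one wishes to keep the paper self-contained it can instead be proved by hand on this explicit ball using the geodesic polar coordinates \eqref{eq:geodesics} already set up in Lemma \ref{le:cheeger}: integrate $|f(u,s)-f(u,s')|$ along radial geodesics and average, the Jacobian $A(u,s)$ being harmless on the bounded range of parameters corresponding to $2B$. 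A minor secondary point is checking $|\nabla\chi|$ is bounded, which follows because $d$ is itself $1$-Lipschitz for the Carnot--Carath\'eodory metric so $\Ga(d,d)\le 1$ wherever $d$ is smooth, and one composes with a smooth one-variable function.
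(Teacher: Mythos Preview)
Your argument is correct and shares the paper's two main ingredients --- Lemma~\ref{le:cheeger} for the far-field part and the $(1,1)$-Poincar\'e inequality on balls (Lemma~\ref{le:cheloc}) for the near-field part --- but the decomposition is organized differently. The paper does not use a cutoff: instead it introduces the radial extension $g(\xi,s)=f(\xi,\min(s,1))$ in the geodesic polar coordinates \eqref{eq:geodesics}, splits $f-m=(f-g)+(g-m)$, handles $f-g$ (which vanishes on $B$) by rerunning the integration-along-geodesics computation of Lemma~\ref{le:cheeger}, and controls $\int_{B^c}|g-m|\,h$ by noting that $g$ is constant along outgoing rays and reducing to $|f(\xi,1)-m|$, which is then bounded via Lemma~\ref{le:cheloc}. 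Your cutoff approach is arguably cleaner because it invokes Lemma~\ref{le:cheeger} as a black box rather than reopening its proof, and the passage from $m$ to $m_{2B}$ is standard; the paper's approach has the advantage of staying entirely inside the explicit polar-coordinate framework already set up.

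One small point: your justification that $|\nabla\chi|$ is bounded via $\Ga(d,d)\le 1$ is slightly delicate, since $d$ is not smooth on the cut locus (the $z$-axis). It is simpler just to take $\chi$ smooth in the ordinary $\R^3$ sense with the stated support properties; then $X\chi$ and $Y\chi$ are smooth and supported in the compact annulus $\overline{2B}\setminus B$, hence bounded.
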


For proving this last lemma, we will need the following $L^1$-Poincar\'e, also called $(1,1)$ Poincar\'e, on balls. This inequality  
can be in fact thought of as a Cheeger type inequality on balls. See 
\cite{Maheux} and references therein. This last lemma shall also be 
used in the next section where we prove the H.-Q. Li inequality via 
complex analysis.

\begin{lem}\label{le:cheloc}
  For any real $R>0$, if $B$ denotes the ball centered at $0$ and of radius
  $R$ for the Carnot-Carath\'eodory distance, there exists a real constant
  $C>0$ such that for any smooth $f:\He\to\mathbb{R}$, by denoting
  $m=|B|^{-1}\int_B\!f(x)\,d\Bx$ the mean of $f$ on $B$,
  $$
  \int_B\!\left|f(x)-m\right|\,d\Bx%
  \leq C\,\int_B\,\left|\nabla f\right|(x)\,d\Bx.
  $$
\end{lem}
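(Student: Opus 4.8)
The plan is to reduce by scaling to a ball of radius one, and then to identify the statement as (a special case of) the classical Poincar\'e inequality for H\"ormander vector fields. First I would use the dilation $\delta_\lambda\colon(x,y,z)\mapsto(\lambda x,\lambda y,\lambda^2z)$ (the map underlying the operator $T_{2\log\lambda}$ of Section~\ref{se:facts}): it carries $B(0,R)$ onto $B(0,\lambda R)$, multiplies Lebesgue measure by $\lambda^4$, and satisfies $X(f\circ\delta_\lambda)=\lambda\,(Xf)\circ\delta_\lambda$ and likewise for $Y$, so that $|\nabla(f\circ\delta_\lambda)|=\lambda\,|\nabla f|\circ\delta_\lambda$. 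Hence the inequality at radius $R$ is equivalent to the one at radius $1$ with the constant divided by $R$; in particular the optimal constant is proportional to $R$, vanishing as $R\to0$ and blowing up as $R\to\infty$, exactly as for the Euclidean Poincar\'e inequality on a ball. It therefore suffices to treat $R=1$.

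For $R=1$ this is precisely the $(1,1)$-Poincar\'e inequality on balls associated with the vector fields $X,Y$, which satisfy H\"ormander's condition at step two; it goes back to Jerison, and I would either invoke it directly (see \cite{Maheux} and the references therein) or reproduce its mechanism. On a Euclidean box one has the elementary Poincar\'e inequality bounding the $L^1$ oscillation of $f$ by the integral of the \emph{full} gradient $(\partial_xf,\partial_yf,\partial_zf)$; writing $\partial_x=X+\tfrac{y}{2}Z$ and $\partial_y=Y-\tfrac{x}{2}Z$, the only contribution not already dominated by $\int|\nabla f|$ is $\int|Zf|$, and here one uses $Z=[X,Y]$: the oscillation of $f$ in the missing direction is recovered by sliding $f$ around small squares traced out by the flows of $X$ and $Y$ (a square of side $\varepsilon$ producing a translation $\simeq\varepsilon^2$ along $Z$), which trades one $Z$ for two horizontal derivatives at the price of a slightly dilated box. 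This yields the \emph{weak} form of the inequality, with $\int_B|\nabla f|$ replaced by $\int_{\lambda B}|\nabla f|$ for some fixed $\lambda>1$. One then lowers $\lambda$ to $1$: Carnot--Carath\'eodory balls on $\He$ satisfy the exact doubling property $|B(x,2r)|=16\,|B(x,r)|$ (left-invariance of Lebesgue measure together with the dilation structure) and a Boman chain condition, so the usual self-improvement argument promotes the weak inequality on all balls to the asserted one.

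The delicate points are this commutator step and the passage from $\int_{\lambda B}|\nabla f|$ to $\int_B|\nabla f|$; both are routine but genuinely rely on the subelliptic structure and on a covering argument. I would also point out that the short geodesic argument used for Lemma~\ref{le:cheeger} does \emph{not} seem to apply here: expressing $f(x)-f(w)=-\int_0^{d(x,w)}\nabla f(\gamma_{x,w}(t))\cdot e_t\,dt$ along minimizing geodesics and changing variables into the coordinates of \eqref{eq:geodesics}, whose Jacobian $A(u,s)$ was computed in the proof of Lemma~\ref{le:cheeger}, one is led to dominate $\int_t^{S}A(u,s)\,ds$ (with $S$ of order the radius) by a multiple of $A(u,t)$; but $A(u,t)\simeq t^4/|u|^{3}$ as $t\to0$ while $\int_t^{S}A(u,s)\,ds$ stays bounded below, so this ratio is unbounded near the centre of the ball (the geodesic exponential map from $0$ degenerates there), and averaging over a sub-ball does not remove the difficulty. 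This is exactly why this ``local'' inequality lies deeper than the exterior Cheeger-type bound of Lemma~\ref{le:cheeger}, where the Gaussian factor $h$ and the restriction $s\ge1$ supplied the missing decay. Finally, I note that for the only use we make of Lemma~\ref{le:cheloc}, namely the proof of Lemma~\ref{le:cheeger-bis} (whose right-hand side is $\int_\He|\nabla f|\,h\,d\Bx$ and in which all the balls occurring are fixed compact sets, on which $h$ is bounded above and below), the weak form with $\int_{\lambda B}|\nabla f|$ on the right already suffices.
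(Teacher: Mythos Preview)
Your proposal is correct and takes essentially the same route as the paper: the paper does not prove Lemma~\ref{le:cheloc} at all but simply quotes it as the known $(1,1)$-Poincar\'e inequality for H\"ormander vector fields, with a reference to \cite{Maheux}. You do the same, adding a scaling reduction to $R=1$ (correct, with the optimal constant proportional to $R$), a sketch of the Jerison mechanism (commutator to recover the $Z$-direction, then weak-to-strong via doubling and a chain argument), and two useful remarks the paper omits: that the naive geodesic integration used in Lemma~\ref{le:cheeger} fails here because the Jacobian $A(u,t)$ vanishes like $t^4$ near the centre, and that for the sole application (Lemma~\ref{le:cheeger-bis}) the weak form with an inflated ball on the right already suffices.
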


We can now make the proof of lemma \ref{le:cheeger-bis}.

\begin{proof}[Proof of lemma \ref{le:cheeger-bis}]
  As in lemma \ref{le:cheeger}, we may safely assume that $R=1$ by a simple
  scaling. For any auxillary function $g:\He\to\R$, we have by denoting
  $m=|B|^{-1}\int_B\!f\,d\Bx$,
  $$
  \int_{B^c}\left|f-m\right|\,h\,d\Bx %
  \leq \int\!\left|f-g\right|\,h\,d\Bx %
  +\int_{B^c}\!\left|g-m\right|\,h\,d\Bx.  
  $$
  Now we choose $g$ such that $g(\xi,s)=f(\xi,\min(s,1))$ where $(\xi,s)$
  denotes the polar coordinates in $\He$ as in the proof of lemma
  \ref{le:cheeger}. More precisely, $\xi\in\partial B$ and the set $\{s\leq
  1\}$ is the unit ball. For the first term the desired gradient bound follows
  then by elementary arguments as in lemma \ref{le:cheeger}. For the second
  term, we write
  $$|f(\xi,1) - m| \leq \int_{s=0}^1 \left( |f(\xi,1) - f(\xi,s)| + |f(\xi,s)-m| \right) \frac{A(\xi,s) ds}{C(\xi)}
  $$ where $C(\xi)=\int_{s=0}^1 A(\xi,s) ds$. We can now conclude by using elementary arguments similar as before and the $L^1-$Poincar\'e inequality of lemma \ref{le:cheloc}.
\end{proof}

Note that lemma \ref{le:cheeger} can be deduced directly from lemma
\ref{le:cheeger-bis}.
We are now in position to prove the H.-Q. Li inequality \eqref{HQLi}.

\begin{proof}[Proof of \eqref{HQLi}]
 With the help of lemmas \ref{le:cheeger} and \ref{le:cheeger-bis}, we may
 reduce the study of the H.-Q. Li inequality to functions which are
 \begin{itemize}
 \item either supported in a ball of radius 1 for the Carath\'eodory 
   metric;
 \item either supported in a cylinder of radius 2 around the $z$ axis (without the unit ball);
 \item either supported outside a cylinder around the $z$-axis.
 \end{itemize}
 Indeed, let see how one may reduce first to the case of a function supported
 either in a ball or outside a ball. If $f$ is any smooth function and $\phi$
 a smooth cutoff function with values $1$ on a ball $B$ of radius $<1$ and
 vanishing outside a ball of radius $1$, we write $f= f\phi+ f(1-\phi) =
 f_{1}+ f_{2}$. Clearly, in order to obtain \eqref{HQLi}, one can add any
 prescribed constant to $f$. In particular, one can assume that
 $\int_B\!f\,d\Bx=0$. Assuming that we know the inequality for $f_{1}$ and
 $f_{2}$, we bound
 $$ 
 \lag \hat X(f), h \rag %
 \leq C\lag (\left|\nabla f_{1}\right|+\left|\nabla f_{2}\right|) ,h \rag
 $$
 then we make use of 
 $$
 \left|\nabla f_{1}\right|+ \left|\nabla f_{2}\right| %
 \leq
 \left|\nabla f\right|+ 2\left|f\right|\left|\nabla \phi\right|
 $$
 and since $\left|\nabla \phi\right|$ is supported outside the unit ball,
 $$\left|f\right|\left|\nabla \phi\right| \leq ||\nabla \phi||_\infty |f| 1_{B ^c}
 $$
 so one
 has by lemma \ref{le:cheeger-bis}
 $$
 \lag \left|f\right|,\left|\nabla \phi\right| h\rag %
 \leq C \lag \left|\nabla f\right| , h\rag.
 $$
 We repeat the same operation with a cutoff function for the neighborhood of
 the $z$-axis. 
 Now, when $f$ is supported inside the ball, we may use the
 method that we used in the proof of theorem \ref{th:DMA}, and the fact that
 $\left|\nabla \log h\right|(\Bx)\leq C d(\Bx)$, which is bounded on the unit
 ball. 
 If $f$ is supported inside the cylinder around the $z$-axis and vanishes on the unit ball, we write,
 with section \ref{se:facts} notations,
 $$
 \lag \hat X(f) ,h\rag = \lag X(f), h\rag + \lag f ,\frac{y}{2}Z(\log h)
 h\rag
 $$ 
 and then we use the fact that $\frac{y}{2}Z(\log h)$ is bounded on the
 cylinder. It remains to observe that
 $$
 \lag \left|f\right|, h\rag  \leq C \lag\left|\nabla f\right|, h\rag
 $$ 
 thanks to lemma \ref{le:cheeger}. 
 It remains to deal with a function which
 is supported outside a cylinder around the $z$-axis. We shall choose another
 integration by parts. For that, let us use a complex notation and write
 $$
 \nabla (f) = X(f)+iY(f) %
 \quad\text{and}\quad %
  \hat \nabla (f) = \hat X(f)- i \hat Y(f).
 $$  
 Note the change of sign in front of $i$ in the second expression. We want to
 bound
 $$
 \lag \hat \nabla (f) , h\rag = -\lag f ,\hat \nabla h \rag.
 $$
 Now, since $h$ is radial, we have
 $$
 \hat \nabla h= \frac{x-iy}{x+iy}\nabla h
 $$
 which comes from the fact that $x\partial_{y} h= y\partial_{x} h$. Let us
 call $\Psi(x,y)=\exp(-2i \theta)$ the function $\frac{x-iy}{x+iy}$, where
 $\theta$ is the angle in the plane $(x,y)$. Then, we integrate again by parts
 and get
 $$
 \lag \hat \nabla f ,h\rag= -\lag f, \Psi(x,y)\nabla h\rag %
 = \lag \nabla f ,\Psi(x,y)h\rag + \lag f ,\nabla (\Psi) h \rag.
 $$
 We then conclude observing that $\Psi$ is bounded and $\left|\nabla
   \Psi\right|$ is bounded outside the cylinder around the $z$ axis. We
 therefore have
 $$
 \left| \lag \hat \nabla (f), h\rag\right|\leq \lag \left|\nabla f\right|,
 h\rag + C \lag \left|f\right|, h\rag
 $$
 and we use again lemma \ref{le:cheeger-bis} to conclude the proof.
\end{proof}

\subsection{Via a complex quasi-commutation}
\label{se:C}

In $\mathbb{R}^n$, it is known that the gradient $\nabla$ commute with the
Laplace operator. This commutation leads to the commutation between $\nabla$
and the heat semigroup $P_t=e^{t\Delta}$ and therefore to the inequality:
\[
\left|\nabla P_t f\right| = \left|P_t \nabla f\right| \le P_t \left|\nabla f\right|.
\] 
In the Heisenberg group, we can follow the same pattern of proof. Nevertheless, several difficulties appear that make the proof quite delicate and technical at certain points. For sake of clarity, before we enter the hearth of the proof, let us precise our strategy. The Lie algebra structure:
\[
[X,Y]=Z,\quad [X,Z]=[Y,Z]=0
\]
leads to the commutation:
\[
(X+iY)L=(L-2iZ)(X+iY),
\]
where $L=X^2+Y^2$. At the level of semigroups, it leads to the \textit{formal} commutation:
\begin{align}\label{formalcommutation}
(X+iY)P_t =e^{t(L-2iZ)}(X+iY)= e^{-2itZ}P_{t}(X+iY).
\end{align}
This commutation is only formal because as we will see the semigroup
associated to the complex operator $L-2iZ$ is not globally well defined. More
precisely, complex solutions to the heat equation $\frac{\partial u}{\partial
 t} =(L-2iZ)u$, $u(0,\cdot)=f$ are represented by a kernel which is 
 nothing esle than the holomorphic complex extension in the $z$ 
 variable of the heat kernel, at the point $z+2it$. Unfortunately, 
 this kernel has poles, and this solution may have singularities. Nevertheless, we will see that
if the initial condition $f$ is a complex gradient, then solutions to this
equation do not explode. More precisely, we may add to this kernel 
any kernel which has no effect on gradients and which cancels the 
poles of the previous extension. Doing this, we shall 
produce  an integral
representation of the solution, without poles. This representation is 
of course not unique.  If we could choose the kernel in such a way that the ratio of it with
the density  $p_t$ is bounded, then the H.-Q. Li inequality would easily
follow. However, we will prove that  such a kernel does not exist.
To overcome this difficulty, we will use two different kernels depending on the
support of the function $f$. By using a partition of the unity as in our
previous proof of H.-Q. Li inequality and lemma \ref{le:cheloc} we
will then be able to conclude.

\

We now enter into the hearth of the proof. In what follows, in order to exploit the
rotational invariance, we shall use the cylindric coordinates $x=r \cos
\theta$, $y=r\sin \theta$ in which the  vector fields $X$ and $Y$ read
\[
X= \cos \theta \partial_{r} - \frac{\sin
\theta}{r} \partial_\theta-\frac{1}{2} r \sin
\theta \partial_{z}
\]
\[
Y= \sin \theta \partial_{r} +\frac{\cos
\theta}{r} \partial_{\theta} +\frac{1}{2} r \cos
\theta \partial_{z}
\]
\[
Z=\partial_{z}.
\]
The heat kernel associated to $(P_t)_{t\geq0}$ writes here in cylindric
coordinates
\begin{equation}\label{gaveau}
p_t(r,z)= \frac{1}{8 \pi^2} \int_{-\infty}^{+\infty} e^{i \lambda z} \frac{\lambda}{\sinh \lambda t}
e^{-\frac{r^2}{4}\lambda  \text{cotanh}  \lambda  t }d \lambda.
\end{equation}
To give a sense to (\ref{formalcommutation}), we begin with the analytic properties of $p_t (r,z)$ in the variable $z$.

\begin{lem}
 Let $t >0$ and $r \ge 0$. The function
 \[
 z \rightarrow p_t (r,z)-\frac{1}{4 \pi^2 \left( t+iz
     +\frac{r^2}{4}\right)^2}-\frac{1}{ 4 \pi^2 \left( t-iz
     +\frac{r^2}{4}\right)^2}
 \]
 admits an analytic continuation on $\left\{ z \in \mathbb{C}, \mid
   \mathbf{Im} z \mid < \frac{r^2}{4}+ 3t \right\}$. The function
 \[
 z \rightarrow p_t (r,z)
 \]
 admits therefore a meromorphic continuation on $\left\{ z \in \mathbb{C},
   \mid \mathbf{Im} z \mid < \frac{r^2}{4}+3 t \right\}$ with double poles at
 $-i \left( t+\frac{r^2}{4} \right)$ and $i \left( t+\frac{r^2}{4} \right)$.
\end{lem}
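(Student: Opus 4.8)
The plan is to isolate, inside the Fourier representation \eqref{gaveau}, the part of the $\lambda$--integrand responsible for the poles, and to show that the remainder decays fast enough in $\lambda$ to give analyticity on a strip strictly larger than the naive domain $\{|\mathbf{Im}\,z|<t+\frac{r^2}{4}\}$. Write $g(\lambda)=\frac{\lambda}{\sinh\lambda t}\,e^{-\frac{r^2}{4}\lambda\coth\lambda t}$; this function is even, and after removing the apparent singularities at $\lambda=0$ it is smooth on $[0,\infty)$ with $g(0)=\frac1t\,e^{-r^2/(4t)}$. Splitting $\int_{\R}=\int_0^\infty+\int_{-\infty}^0$ and using $g(-\lambda)=g(\lambda)$ yields, for real $z$,
$$
p_t(r,z)=\frac{1}{8\pi^2}\left(\int_0^\infty e^{i\lambda z}g(\lambda)\,d\lambda+\int_0^\infty e^{-i\lambda z}g(\lambda)\,d\lambda\right).
$$

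Next I would extract the large-$\lambda$ behaviour of $g$ by inserting $\frac{1}{\sinh\lambda t}=\frac{2e^{-\lambda t}}{1-e^{-2\lambda t}}$ and $\coth\lambda t=1+\frac{2e^{-2\lambda t}}{1-e^{-2\lambda t}}$. This factors $g(\lambda)=2\lambda\,e^{-\lambda(t+\frac{r^2}{4})}\phi(\lambda)$ with $\phi(\lambda)\to1$ and, more precisely, $|\phi(\lambda)-1|\le C_{r,t}\,\lambda\,e^{-2\lambda t}$ for $\lambda\ge1$; hence the remainder $\widetilde g(\lambda):=g(\lambda)-2\lambda\,e^{-\lambda(t+\frac{r^2}{4})}$ is smooth on $[0,\infty)$ and satisfies
$$
|\widetilde g(\lambda)|\le C_{r,t}\,(1+\lambda^2)\,e^{-\lambda(3t+\frac{r^2}{4})},\qquad\lambda\ge0.
$$
The leading term integrates explicitly: since $\int_0^\infty2\lambda\,e^{-\lambda w}\,d\lambda=2/w^2$ for $\mathbf{Re}\,w>0$, one gets $\frac{1}{8\pi^2}\int_0^\infty e^{\pm i\lambda z}\,2\lambda\,e^{-\lambda(t+\frac{r^2}{4})}\,d\lambda=\frac{1}{4\pi^2(t+\frac{r^2}{4}\mp iz)^2}$, a function meromorphic in $z$ with a single double pole at $z=\mp i(t+\frac{r^2}{4})$. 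These are precisely the two rational functions subtracted in the statement.

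It remains to treat the remainder integrals $\frac{1}{8\pi^2}\int_0^\infty e^{\pm i\lambda z}\widetilde g(\lambda)\,d\lambda$. Since $|e^{i\lambda z}|=e^{-\lambda\,\mathbf{Im}\,z}$ for $\lambda>0$, the estimate on $\widetilde g$ dominates $e^{i\lambda z}\widetilde g(\lambda)$ by $C_{r,t}(1+\lambda^2)e^{-\lambda(3t+\frac{r^2}{4}+\mathbf{Im}\,z)}$, integrable (and locally uniformly so in $z$) exactly when $\mathbf{Im}\,z>-(3t+\frac{r^2}{4})$; a Morera/dominated-convergence argument then makes $z\mapsto\int_0^\infty e^{i\lambda z}\widetilde g(\lambda)\,d\lambda$ holomorphic on $\{\mathbf{Im}\,z>-(3t+\frac{r^2}{4})\}$, and symmetrically $z\mapsto\int_0^\infty e^{-i\lambda z}\widetilde g(\lambda)\,d\lambda$ holomorphic on $\{\mathbf{Im}\,z<3t+\frac{r^2}{4}\}$. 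Their sum is therefore holomorphic on the intersection $\{|\mathbf{Im}\,z|<\frac{r^2}{4}+3t\}$; since it coincides with $p_t(r,z)-\frac{1}{4\pi^2(t+iz+\frac{r^2}{4})^2}-\frac{1}{4\pi^2(t-iz+\frac{r^2}{4})^2}$ on the real axis by the decomposition above, this is the first assertion, and adding back the two rational functions gives the meromorphic continuation of $p_t(r,\cdot)$ with double poles at $\pm i(t+\frac{r^2}{4})$. The step requiring genuine care — the main, essentially computational, obstacle — is the quantitative bound on $\widetilde g$: one must verify that both the subleading term of the geometric expansion of $\frac{1}{\sinh\lambda t}$ and the first correction coming from $\coth\lambda t-1$ contribute only at relative order $e^{-2\lambda t}$, so that nothing decaying slower than $e^{-\lambda(3t+\frac{r^2}{4})}$ remains, and that $\widetilde g$ is genuinely regular up to $\lambda=0$.
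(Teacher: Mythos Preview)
Your argument is correct and is essentially the same as the paper's: both subtract the leading large-$|\lambda|$ asymptote $2|\lambda|\,e^{-|\lambda|(t+r^2/4)}$ from the integrand (recognized as the Laplace transform of the two rational terms) and observe that the remainder decays like $e^{-|\lambda|(3t+r^2/4)}$, which yields analyticity on the wider strip. The paper simply writes the subtracted integral in one line and says ``the desired result follows easily,'' whereas you spell out the $\phi(\lambda)-1=O(\lambda e^{-2\lambda t})$ estimate and the Morera argument; the substance is identical.
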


\begin{proof}
 Let $t >0$ and $r \ge 0$. By using the expression (\ref{gaveau}) for $p_t
 (r,z)$, and
 \[
 \frac{1}{ \left( t+iz+\frac{r^2}{4}\right)^2}
 =\int_{0}^{+\infty} e^{-i \lambda z} e^{-\lambda t}e^{-\lambda\frac{r^2}{4}} \lambda d\lambda,
 \]
 \[
 \frac{1}{ \left( t-iz+\frac{r^2}{4}\right)^2}
 =\int_{0}^{+\infty} e^{i \lambda z} e^{-\lambda t}e^{-\lambda\frac{r^2}{4}} \lambda d\lambda,
 \]
 we obtain
 \begin{align*}
    & p_t (r,z)-\frac{1}{4 \pi^2 \left( t+iz
	+\frac{r^2}{4}\right)^2}-\frac{1}{ 4 \pi^2 \left( t-iz +\frac{r^2}{4}\right)^2} \\
   = &\frac{1}{8 \pi^2} \int_{-\infty}^{+\infty} e^{i \lambda z} \left( \frac{
	e^{-\frac{r^2}{4}\mid \lambda \mid \text{cotanh} \mid \lambda \mid t
	}}{\sinh\mid \lambda \mid t}-2e^{-\frac{1}{4}\mid \lambda \mid
	r^2-\mid \lambda \mid t }\right) \mid \lambda \mid d \lambda
 \end{align*}
 and the desired result follows easily.
\end{proof}

For any $t>0$, $r \ge 0$, and $z \in \mathbb{C}-\{-i(t+\frac{1}{4}r^2)\}$ such
that $\mid \mathbf{Im} z \mid < \frac{r^2}{4}+3t$, let us denote
\[
p_t^* (r,z)=p_t (r,z)-\frac{1}{4 \pi^2 \left( t+iz
+\frac{r^2}{4}\right)^2}.
\]
We have the following commutation property.

\begin{prop}
 If $f: \He \rightarrow \mathbb{R}$ is a smooth function with compact
 support, then
 \[
 (X +i Y) P_t f (0)  = \int_{\He} p^*_t (r,
 z+2it)(X +i Y)f(r, \theta, z) r dr d\theta dz, \quad t > 0.
 \]
\end{prop}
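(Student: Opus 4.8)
I would reduce the formula, by two integrations by parts, to a scalar identity between $p_t$ and its holomorphic shift in $z$, and then verify that identity on the Fourier side from \eqref{gaveau}. The right invariant fields $\hat X,\hat Y$ commute with $L$ (Section~\ref{se:facts}) hence with $P_t$, and coincide with $X,Y$ at the origin, so
\[
(X+iY)P_tf(0)=P_t\big((\hat X+i\hat Y)f\big)(0)=\big\langle(\hat X+i\hat Y)f,\,p_t\big\rangle=-\big\langle f,\,(\hat X+i\hat Y)p_t\big\rangle,
\]
the last step by integration by parts ($\hat X,\hat Y$ are divergence free for Lebesgue measure and $f$ has compact support). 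By the preceding lemma the only singularity of $p_t^*(r,\cdot)$ in $\{|\mathbf{Im}\,z|<\tfrac{r^2}4+3t\}$ is the double pole at $-i(t+\tfrac{r^2}4)$, which $z+2it$ never reaches for real $z$ and $t>0$; hence $(r,z)\mapsto p_t^*(r,z+2it)$ is a genuine smooth function on $\He$, and the right hand side of the proposition likewise equals $-\langle f,(X+iY)[p_t^*(\cdot,\cdot+2it)]\rangle$. Since $p_t$ is radial, both $(\hat X+i\hat Y)p_t$ and $(X+iY)[p_t^*(\cdot,\cdot+2it)]$ are $e^{i\theta}$ times a function of $(r,z)$; in cylindric coordinates $(\hat X+i\hat Y)p_t=e^{i\theta}\big(\partial_r p_t-\tfrac i2 r\,\partial_z p_t\big)$, so the proposition is equivalent to
\[
\big(\partial_r p_t-\tfrac i2 r\,\partial_z p_t\big)(r,z)=\big(\partial_r p_t^*+\tfrac i2 r\,\partial_z p_t^*\big)(r,z+2it),\qquad r\ge0,\ z\in\R,\ t>0.
\]

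Next I would dispose of the subtracted term. Writing $q_t(r,z)=\frac1{4\pi^2(t+iz+\frac{r^2}4)^2}$, so that $p_t^*=p_t-q_t$, an elementary differentiation gives $\big(\partial_r q_t+\tfrac i2 r\,\partial_z q_t\big)(r,w)\equiv0$ (both terms equal $\mp\frac{r}{4\pi^2}(t+iw+\tfrac{r^2}4)^{-3}$), and this persists after the shift $w\mapsto z+2it$; this is precisely the sense in which $q_t$ ``has no effect on gradients''. Hence $\partial_r p_t^*+\tfrac i2 r\,\partial_z p_t^*=\partial_r p_t+\tfrac i2 r\,\partial_z p_t$, and this combination — unlike $p_t$ itself — extends holomorphically in its second argument on all of $\{|\mathbf{Im}|<\tfrac{r^2}4+3t\}$ away from $-i(t+\tfrac{r^2}4)$ (the upper pole of $p_t$ cancels), so its value at $z+2it$ is unambiguous. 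It remains to prove
\[
\big(\partial_r p_t-\tfrac i2 r\,\partial_z p_t\big)(r,z)=\big(\partial_r p_t+\tfrac i2 r\,\partial_z p_t\big)(r,z+2it).
\]

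This I would check from \eqref{gaveau}. Differentiating under the integral and using $\coth u\mp1=e^{\mp u}/\sinh u$, the left hand side is $-\frac{r}{16\pi^2}\int_{\R}e^{i\lambda z}\frac{\lambda^2e^{-\lambda t}}{\sinh^2\lambda t}\,e^{-\frac{r^2}4\lambda\coth\lambda t}\,d\lambda$, while $\partial_r p_t+\tfrac i2 r\,\partial_z p_t$ has the same representation with $e^{-\lambda t}$ replaced by $e^{\lambda t}$. The symbol of the latter decays like $\lambda^2e^{-|\lambda|(t+r^2/4)}$ as $\lambda\to+\infty$ and like $\lambda^2e^{-|\lambda|(3t+r^2/4)}$ as $\lambda\to-\infty$, so its Fourier integral is holomorphic in the second variable on $\{-(t+\tfrac{r^2}4)<\mathbf{Im}<3t+\tfrac{r^2}4\}$ and agrees there with the analytic continuation identified above. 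Evaluating at $z+2it$ replaces $e^{i\lambda z}$ by $e^{i\lambda z}e^{-2\lambda t}$, i.e. turns $e^{\lambda t}$ into $e^{-\lambda t}$, which is exactly the left hand side; this proves the identity, hence the proposition.

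The delicate point is this last step. As soon as $r<2\sqrt t$, the analytic continuation of $p_t(r,\cdot)$ crosses a genuine pole of $p_t$ between $\mathbf{Im}=0$ and $\mathbf{Im}=2t$, so one cannot simply translate the contour of the Fourier integral for $p_t$; equivalently the naive kernel $p_t(r,z+2it)$ is singular on $\{r=2\sqrt t,\ z=0\}$. Only the combination $\partial_r p_t+\tfrac i2 r\,\partial_z p_t$, in which that pole is annihilated by subtracting $q_t$, extends holomorphically past $\mathbf{Im}=2t$ — and this is exactly why $p_t^*$, not $p_t$, appears in the statement. Making the convergence of the shifted integrals and the matching of the two analytic continuations rigorous (the preceding lemma supplying the strip of holomorphy of $p_t^*$) is the one point requiring genuine care.
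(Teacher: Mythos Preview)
Your argument is correct and complete; the Fourier computation and the handling of the analytic continuation are both sound. But your route is genuinely different from the paper's. The paper exploits the operator identity $(X+iY)L=(L-2iZ)(X+iY)$ directly: it tests the proposition on $Z$--eigenfunctions $f=e^{i\lambda z}g(r,\theta)$, for which the identity immediately yields $(X+iY)P_tf(0)=e^{2\lambda t}P_t((X+iY)f)(0)$; then it replaces $p_t$ by $p_t^\ast$ (harmless since $(X+iY)q_t=0$, the same fact you use) and recognizes $e^{2\lambda t}$ as the shift $z\mapsto z-2it$ on eigenfunctions, recovering the general case by Fourier synthesis in $z$. Your approach instead stays entirely at the level of the kernel: two integrations by parts reduce the statement to the pointwise identity $(\partial_r-\tfrac{ir}{2}\partial_z)p_t(r,z)=(\partial_r+\tfrac{ir}{2}\partial_z)p_t(r,z+2it)$, which you verify from the explicit representation \eqref{gaveau}. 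The paper's argument is more structural and makes transparent why the shift $z\mapsto z+2it$ arises (it is the semigroup of $-2iZ$), which is useful if one hopes to extend the method to other nilpotent groups; your argument is more self-contained, avoids the passage through non-compactly-supported eigenfunctions and the somewhat informal Fourier synthesis step, and makes the role of the analyticity strip from the preceding lemma explicit.
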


\begin{proof}
 Due to the identities $[X,Y]=Z$ and $[X,Z]=[Y,Z]=0$, we have 
 \[
 (X+iY)L=(L-2iZ)(X+iY).
 \]
 If $f(r,\theta,z)=e^{i\lambda z} g(r,\theta)$, for some $\lambda \in \mathbb{R}$ and some function $g$, we have $Zf=i\lambda f$ and thus
\[
(X+iY)Lf=(L+2\lambda) (X+iY)f.
\]
We deduce therefore,
 \[
 (X +i Y)P_t f (0)=e^{2\lambda t} (P_t (X+iY)f)(0)=e^{2\lambda t}\int_{\He} p_t (r,
 z)((X +i Y)f)(r, \theta, z) r dr d\theta dz.
 \]
 Let us now observe that for
 $t>0$,
 \[
 (X+iY) \frac{1}{\left( t+iz +\frac{r^2}{4}\right)^2}=0
 \]
 and thus
 \[
 (X+iY)p^*_t=(X+iY)p_t.
 \]
Consequently,
\[
 (X +i Y)P_t f (0)=e^{2\lambda t}\int_{\He} p^*_t (r,
 z)((X +i Y)f)(r, \theta, z) r dr d\theta dz.
\]
Now
$$e^{2\lambda t} f(r, \theta, z)= f(r, \theta, z- 2 i t)
$$
and the result for the function $f$ follows by integrating by parts with respect to the variable $z$. For general $f$, we can conclude by using the Fourier transform of $f$ with respect to the variable $z$.
\end{proof}

As a first consequence, we deduce that for every $R>0$, there exists a finite constant $C>0$
such that for every smooth function compactly supported inside a Carnot-Carath\'eodory ball
$\mathbf{B}_R$ of radius $R$,
\[
\left|\nabla 
  P_{1} f\right| (0)\leq C P_{1}( \left|\nabla f\right|)(0).
\]
But of course, here, the constant $C$ that we obtain depends on $R$, and we shall see below that it blows
up when $R \rightarrow +\infty$.

Now, if $R>0$ is big enough, the ball with radius $R$ contains the region of the Heisenberg group whose cylindric coordinates are of the form
$(r=2, \theta \in [0,2\pi], z=0)$ and if $f$ is a smooth function with compact support that vanishes in a ball
with radius $R$,  we have the commutation:
\[
(X +i Y) P_1 f (0)  = \int_{\He} p_1 (r,
z+2i)(X +i Y)f(r, \theta, z) r dr d\theta dz, \quad t > 0.
\]
that follows from the fact that $(X+iY)p_t=(X+iY)p_t^*$ and from the fact that the pole of $(r,z) \rightarrow p_1 (r,z)$ is at $r=2$, $z=0$. The keypoint is then the following estimate:

\begin{prop}\label{estimate_ratio}
There exists $R>0$ such that
\[
\sup_{r^2 +\mid z \mid \ge R} \frac{ \mid p_1 (r, z+2i) \mid}{p_1
(r,z)} <+\infty.
\]
\end{prop}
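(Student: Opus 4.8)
The plan is to reduce everything to an estimate on the explicit Gaveau kernel $p_1(r,z)$ and its analytic continuation $p_1(r,z+2i)$, and then to exploit the precise asymptotics of the heat kernel recorded in \eqref{esth1} together with the meromorphic continuation established in the lemma above. Recall that $p_1(r,z)$ has a double pole (in the analytically continued $z$ variable) at $z = i(1+\tfrac{r^2}{4})$, so the numerator $p_1(r,z+2i)$ has a double pole on the locus where $z+2i = i(1+\tfrac{r^2}{4})$, i.e. $z = i(\tfrac{r^2}{4}-1)$; for real $z$ this only meets $z=0$ at $r=2$. Hence on the region $\{r^2+|z|\ge R\}$ with $R$ large, we stay uniformly away from this pole, and the ratio is finite on compact subsets. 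The whole content of the proposition is the behavior \emph{at infinity}, and this is where the work lies.

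First I would use the known sharp estimates for $h$ (hence, by the dilation relation, for $p_1$) of the form \eqref{esth1}, namely $p_1(r,z)\simeq (1+\|\Bx\|d(\Bx))^{-1/2}\exp(-d^2(\Bx)/4)$ where $\Bx=(x,y,z)$, $\|\Bx\|=r$, and $d(\Bx)$ is the Carnot--Carath\'eodory distance from the origin. For the numerator $p_1(r,z+2i)$, I would go back to the Fourier representation \eqref{gaveau}: shifting $z\mapsto z+2i$ multiplies the integrand by $e^{-2\lambda}$, so
\[
p_1(r,z+2i)=\frac{1}{8\pi^2}\int_{-\infty}^{+\infty}e^{i\lambda z}\,\frac{\lambda}{\sinh\lambda}\,e^{-2\lambda}\,e^{-\frac{r^2}{4}\lambda\coth\lambda}\,d\lambda,
\]
which after combining $e^{-2\lambda}/\sinh\lambda$ and using $\coth\lambda=1+\tfrac{2}{e^{2\lambda}-1}$ can be rewritten with a modified but still rapidly decaying weight. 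The key point to extract is that the shift in $z$ only perturbs the Gaussian-type decay in a controlled way: writing $|p_1(r,z+2i)|$ via a (complex) saddle-point / Laplace-type analysis of this integral, one should obtain a bound of the same shape $\lesssim (1+r\,\delta(r,z))^{-1/2}\exp(-\tfrac14\,\tilde d^2(r,z))$ where $\tilde d$ differs from $d$ by a bounded amount (a shift of the "effective height" by a bounded quantity), except on the bounded region near the pole at $r=2$, $z=0$, which is excluded.

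The hard part will be making the comparison of the two exponents uniform: one must show that $\tilde d^2(r,z) - d^2(\Bx)$ is bounded below (equivalently, that the numerator does not decay slower than the denominator) uniformly over $\{r^2+|z|\ge R\}$, and simultaneously that the polynomial prefactors are comparable. I would handle this by splitting into regimes according to the geometry of geodesics as in \eqref{eq:geodesics}: (i) the region where $r$ is large compared to $|z|$, where $d(\Bx)\simeq r$ and the shift is negligible; (ii) the region near the $z$-axis where $r$ is small and $d(\Bx)^2 \simeq |z|$ with a $\mathcal{O}(r^{-2})$-sensitive dependence — here one uses the explicit parametrization $z(s)=\tfrac{|u|^2}{2}(\tfrac{s}{|u|}-\sin\tfrac{s}{|u|})$ to see that a bounded shift in $z$ produces a bounded change in $d^2$ as long as $r$ stays bounded away from $0$ \emph{or} $|z|$ is large; (iii) the intermediate regime. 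In each regime the saddle point of the $\lambda$-integral for $p_1(r,z+2i)$ lies near a shift of the saddle for $p_1(r,z)$, and a direct comparison of the two integrands at their respective saddles yields the uniform bound on the ratio. The only place this comparison fails is a neighborhood of the pole $(r,z)=(2,0)$, and choosing $R$ large enough to exclude it (together with the bounded region, on which the continuous function $p_1(r,z+2i)/p_1(r,z)$ is obviously bounded away from that single point) finishes the proof.
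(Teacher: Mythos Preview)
Your outline matches the paper's strategy: work from the Fourier integral with the extra $e^{-2\lambda}$ factor, run a saddle-point comparison against the sharp bounds \eqref{esth1}, and split according to whether $r^2$ dominates $|z|$ or not. The pole identification at $(r,z)=(2,0)$ and its exclusion by choosing $R$ large are correct. But the proposal stops exactly where the work begins, and in each regime you slide past a step that needs a specific device which you do not supply.

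In the regime $r^2\gtrsim |z|$, the factor $e^{-2\lambda}$ is not a bounded perturbation along the deformed contour (where $\mathrm{Re}\,\lambda$ is unbounded), so ``the shift only perturbs the Gaussian decay in a controlled way'' is not yet justified. The paper absorbs $e^{-2\lambda}$ into the main exponent via
\[
e^{-2\lambda}e^{-\frac{r^2}{4}\lambda\coth\lambda}
= e^{-(\frac{r^2}{4}-2)\lambda\coth\lambda}\,e^{2\lambda-2\lambda\coth\lambda},
\]
so the dominant exponential becomes exactly that of $p_1(\sqrt{r^2-8},z)$, with saddle value $d(\sqrt{r^2-8},z)^2/4$; one then uses that $d(\sqrt{r^2-8},z)-d(r,z)$ is uniformly bounded on this region. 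In the near-axis regime $r^2\ll |z|$, the saddle in the strip $|\mathrm{Im}\,\lambda|<\pi$ approaches the poles of $1/\sinh\lambda$ and the Laplace analysis on the original integral degenerates; the paper switches representations entirely, pushing the contour past the poles $ik\pi$ and rewriting $p_1$ as a residue series summed into a small-circle integral, after which the shift $z\mapsto z+2i$ becomes a bounded factor $e^{2i\lambda}$ and the Beals--Gaveau--Greiner analysis applies. Your claim that a bounded shift in $z$ gives a bounded change in $d^2$ is true for the distance, but does not by itself bound $|p_1(r,z+2i)|$ from above, nor does it give the matching prefactor $(1+r\,d(r,z))^{-1/2}$. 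As written, this is a plan rather than a proof: the two regime-specific devices are precisely what make the saddle-point comparison go through.
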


\begin{proof}
We shall proceed in two steps.

\textbf{Step 1.} We show that for any $\eta >0$,
\[
\sup_{r \ge 3, r^2 \ge \eta \mid z \mid} \frac{\left| p_1 (r, z+2i) \right| }{p_1 (r,z)} < +\infty.
\]
For convenience, and by symmetry, we may assume $z >0$.  Let us first observe that on
our domain:
\begin{align} \label{integrale}
p_1 (r, z+2i)=\frac{1}{8 \pi^2}
\int_{-\infty}^{+\infty} e^{-2\lambda} e^{i \lambda z}
\frac{\lambda}{\sinh \lambda } e^{-\frac{r^2}{4}\lambda
\text{cotanh}  \lambda   }d \lambda
\end{align}
>From \cite{Gaveau2}, it is known that for fixed $r,z$, the function
\[
g:\lambda \rightarrow -i \lambda z+\frac{r^2}{4}\lambda
\text{cotanh} \lambda,
\]
has a unique critical point in the strip $\{ \mid \mathbf{Im}
\lambda \mid < \frac{\pi}{2} \}$. This critical point is $i\theta (r,z)$,
where $\theta (r,z)$ the unique solution in $(0,\frac {\pi} {2})$ of the
equation
\[
\mu (\frac{1}{2}\theta (r,z)) r^2= 4 z,
\]
with $\mu (\theta)=\frac{\theta}{\sin^2 \theta} -\text{ cotan }
\theta$. At this critical point, we have
\[
g( i\theta (r,z)) =\frac{d^2 (r,z)}{4},
\]
where $d(r,z)$ is the Carnot-Carath\'eodory distance from 0 to  the point with cylindric coordinates $(r,\theta, z)$ (this distance does not depend on $\theta$, that is why it is omitted in the notation). In fact, our function $g$
corresponds to $g(r,z,\lambda)= f(\frac{r}{\sqrt 2}, \frac{z}{2}, 2 \lambda)$
where $f$ is the function studied in \cite{Gaveau2}.

Moreover the function $s \rightarrow \mathbf{Re} g(s+i\theta
(r,z))$, grows with $\mid s \mid$, and has a global minimum at
$s=0$, indeed a tedious computation shows that
\begin{align*}
\mathbf{Re}(g(s+i\theta (r,z))- g(i\theta (r,z))) &
=\frac{\sinh^2 2s}{\sinh^2 2s +\sin^2 2\theta(r,z)} (2s \text{ cotanh
} 2s - 2\theta(r,z) \text{ cotan } 2\theta(r,z)) r^2 \\
& \ge \frac{\sinh^2 2s}{\sinh^2 2s +1} (2s \text{ cotanh
} 2s - 1) r^2 \\
& \ge 0.
\end{align*}
Let us finally observe that the previous computation also shows
that there exists $ \delta >
0$ such that for $s \in [-1,1]$
\[
\mathbf{Re} g(s+i\theta(r,z) ) \ge \frac{d^2 (r,z)}{4} +\delta r^2
s^2.
\]
With all this in hands, we can now turn to our proof. We first start by
changing the contour of integration in (\ref{integrale}):
\begin{align*}
\int_{-\infty}^{+\infty} e^{-2\lambda} e^{i \lambda z}
\frac{\lambda}{\sinh \lambda } e^{-\frac{r^2}{4}\lambda
\text{cotanh}  \lambda   }d \lambda & = \int_{\mathbf{Im} \lambda
=\theta (\sqrt{r^2-8},z)} e^{-2\lambda} e^{i \lambda z}
\frac{\lambda}{\sinh \lambda } e^{-\frac{r^2}{4}\lambda
\text{cotanh}  \lambda   }d \lambda \\
&=
\int_{\mathbf{Im} \lambda =\theta (\sqrt{r^2-8},z)}  e^{i \lambda
z} \frac{\lambda}{\sinh \lambda } e^{-\left( \frac{r^2}{4}-2
\right) \lambda \text{cotanh} \lambda   }e^{ 2\lambda-2  \lambda
\text{cotanh} \lambda   } d \lambda
\end{align*}
Therefore, by denoting
\[
U(\lambda)=e^{
2\lambda-2 \lambda \text{cotanh} \lambda } \frac{\lambda}{\sinh
\lambda }
\]
we get
\begin{align*}
 \left| \int_{-\infty}^{+\infty} e^{-2\lambda} e^{i \lambda z}
\frac{\lambda}{\sinh \lambda } e^{-\frac{r^2}{4}\lambda
\text{cotanh}  \lambda   }d \lambda \right|  \le &
e^{-\frac{d(\sqrt{r^2-8},z)^2}{4}}\int_{\mid s \mid \le 1 }
e^{-(r^2-8) \delta^2s^2}
\left|U(s+i\theta(\sqrt{r^2-8},z)) \right|ds  \\
    &+e^{-\frac{d(\sqrt{r^2-8},z)^2}{4}} \int_{\mid s \mid \ge 1
    }e^{-(r^2-8) \delta^2}
\left|U(s+i\theta(\sqrt{r^2-8},z)) \right|ds \\
\le & C_1 \frac{e^{-\frac{d(r,z)^2}{4}}}{r},
\end{align*}
where we used the fact that on the domain on which we work, the
difference $d(\sqrt{r^2-8},z)-d(r,z)$ is uniformly bounded.
Finally, from the lower estimate of \cite{Li2}, on the considered
domain,
\[
p_t (r,z) \ge C_2 \frac{e^{-\frac{d(r,z)^2}{4}}}{r}.
\]
It concludes the proof of step 1.

\textbf{Step 2.} We show that there exists $\eta >0$ such that
\[
\sup_{ \mid z \mid \ge 1 , r^2 \le \eta \mid z \mid} \frac{\left|
p_1 (r, z+2i) \right|}{p_1 (r,z)} < +\infty.
\]
We first start by giving an analytic representation of
\[
p_1 (r, z+2i) 
\]
that is valid on the domain on which we work. As in the previous proof, we
assume $z >0$. Due to the Cauchy theorem, we can change the contour of
integration in the representation (\ref{gaveau}), to get with $0 < \varepsilon
< \pi$,
\begin{align*}
p_1 (r,z) & =\frac{1}{8 \pi^2} \sum_{k=1}^{+\infty} \int_{\mid
\lambda -ik\pi \mid = \varepsilon} e^{i \lambda z}
\frac{\lambda}{\sinh \lambda } e^{-\frac{r^2}{4}\lambda
\text{cotanh}  \lambda   }d \lambda \\
& =\frac{-i}{8 \pi^2} \sum_{k=1}^{+\infty} \int_{\mid
\lambda \mid = \varepsilon} e^{i (-i\lambda+ik\pi) z}
\frac{(-i\lambda+ik\pi)}{\sinh (-i\lambda+ik\pi)}
e^{-\frac{r^2}{4}(-i\lambda+ik\pi) \text{cotanh} (-i\lambda+ik\pi)
}d \lambda \\
& =\frac{-i}{8 \pi^2}  \int_{\mid
\lambda \mid = \varepsilon}  \frac{ e^{-(\pi-\lambda)\left( z
-\frac{r^2}{4} \text { cotan } \lambda \right)}}{1+e^{-\pi\left( z
-\frac{r^2}{4} \text { cotan } \lambda  \right)} }
\left(
\frac{\pi}{1+e^{-\pi\left( z -\frac{r^2}{4} \text { cotan }
\lambda  \right) } }-\lambda \right) \frac{d\lambda}{\sin \lambda}
\end{align*}
Therefore, for $z>0$,
\begin{align*}
 & p^*_1 (r, z+2i) +\frac{1}{4 \pi^2 \left( -1+iz
+\frac{r^2}{4}\right)^2} \\
= & \frac{-i}{8 \pi^2}  \int_{\mid \lambda \mid = \varepsilon}
e^{2i\lambda} \frac{ e^{-(\pi-\lambda)\left( z -\frac{r^2}{4}
\text { cotan } \lambda \right)}}{1+e^{-\pi\left( z -\frac{r^2}{4}
\text { cotan } \lambda  \right)} }
\left(
\frac{\pi}{1+e^{-\pi\left( z -\frac{r^2}{4} \text { cotan }
\lambda  \right) } }-\lambda \right) \frac{d\lambda}{\sin \lambda}
\end{align*}
On our domain, if $\eta$ is small enough, when $r, z \rightarrow
+\infty$, $\mathbf{Re}(z -\frac{r^2}{4} \text { cotan } \lambda)$ goes
uniformly on the circle $\mid \lambda \mid =\varepsilon$ to
$+\infty$. Consequently, on our domain
\begin{multline*}
\left| p^*_1 (r, z+2i) +\frac{1}{4 \pi^2 \left( -1+iz
+\frac{r^2}{4}\right)^2}\right| 
\le c_1 \left| \int_{\mid \lambda
\mid = \varepsilon} e^{2i\lambda} e^{-(\pi-\lambda)\left( z
-\frac{r^2}{4} \text { cotan } \lambda \right)}
\left(\pi - \lambda \right) \frac{d\lambda}{\sin \lambda}
\right|
\end{multline*}
for some finite positive constant $c_1$. By choosing $\varepsilon= \pi- 2
\theta (r,z)$, we have
\begin{align*}
 & \int_{\mid \lambda \mid = \varepsilon} e^{2i\lambda}
e^{-(\pi-\lambda)\left( z -\frac{r^2}{4} \text { cotan } \lambda
\right)}
\left(\pi - \lambda \right) \frac{d\lambda}{\sin \lambda} \\
= & \int_{\mid \lambda
\mid = \pi -2\theta (r,z)} e^{2i\lambda} e^{-(\pi-\lambda)\left( z
-\frac{r^2}{4} \text { cotan } \lambda \right)}
\left(\pi - \lambda \right) \frac{d\lambda}{\sin \lambda},
\end{align*}
where the function $\theta (r,z)$ has been introduced above. At this stage, we
can follow step by step the proof of Theorem 2.17 in \cite{Gaveau2} (the only
difference is in the function $V$ which we take equal to
$V(\lambda)=e^{2i\lambda}\frac{\pi -\lambda}{\sin \lambda})$ to get an
estimate on our domain :
\[
\left| \int_{\mid \lambda \mid = \pi -2\theta (r,z)} e^{2i\lambda}
e^{-(\pi-\lambda)\left( z -\frac{r^2}{4} \text { cotan } \lambda
\right)}
\left(\pi - \lambda \right) \frac{d\lambda}{\sin \lambda} \right|
\le c_2 \frac{e^{-\frac{d(r,z)^2}{4}}}{\sqrt{ r d(r,z)}}
\]
for some finite positive constant $c_2$. Finally, the lower estimate of
\cite{Li2} leads to the conclusion.
\end{proof}

\begin{rmk}
 In order to extend the H.-Q. Li inequality to more general situations, it would be
 interesting to get a proof of the above proposition that would not use the
 explicit expression for $p_t (r,z)$.
\end{rmk}

We can now reprove H.-Q. Li's inequality by using a partition of the unity
(which is here simpler than in the previous subsection) and the $L^1-$Poincar\'e inequality of 
lemma \ref{le:cheloc} (which was also used in the previous subsection). Let $f:\mathbb{H} \rightarrow \mathbb{R}$ be a smooth
positive function compactly supported and let $0 \le \phi \le 1$ be a smooth
function that takes the value $1$ on a ball $\mathbf{B}_{R_1}$ and the value
$0$ outside the ball $\mathbf{B}_{R_2}$ where $R_1 < R_2$, with $R_1$ big
enough. We have
\begin{align*}
(X+iY)P_1 f (0)=&(X+iY)P_1 \phi f (0)+(X+iY)P_1 (1-\phi)f (0) \\
=&\int_{\mathbb{H}} p^*_1 (r,z+2i) (X+iY)(f\phi) (r,\theta,z) rdrd\theta dz \\
& +\int_{\mathbb{H}} p_1 (r,z+2i) (X+iY)(f(1-\phi)) 
(r,\theta,z) rdrd\theta dz \\
=&\int_{\mathbb{H}}\phi (r,\theta,z) p^*_1 (r,z+2i) (X+iY)f (r,\theta,z) rdrd\theta dz \\
& +\int_{\mathbb{H}}(1-\phi(r,\theta,z)) p_1 (r,z+2i) (X+iY)f 
(r,\theta,z) rdrd\theta dz \\
 & +\frac{1}{4\pi^2}\int_{\mathbb{H}} f (r,\theta,z) \frac{(X+iY)\phi (r,\theta,z)}{\left(-1+iz+\frac{r^2}{4} \right)^2}rdrd\theta dz.
\end{align*}
Therefore
\[
\mid \nabla P_1 f (0) \mid \le C  P_1\mid \nabla f  \mid (0)+\left| \frac{1}{4\pi^2}\int_{\mathbb{H}} f (r,\theta,z) \frac{(X+iY)\phi (r,\theta,z)}{\left(-1+iz+\frac{r^2}{4} \right)^2}rdrd\theta dz \right|.
\] 
Now, we estimate $\left| \int_{\mathbb{H}} f(r,\theta,z) \frac{(X+iY)\phi
   (r,\theta,z)}{\left(-1+iz+\frac{r^2}{4} \right)^2}rdrd \theta dz \right|$
thanks to lemma \ref{le:cheloc}:
\begin{align*}
& \left| \int_{\mathbb{H}} f(r,\theta,z)  \frac{(X+iY)\phi (r,\theta,z)}{\left(-1+iz+\frac{r^2}{4} \right)^2}rdrd
\theta dz \right|\\
=&\left| \int_{\mathbb{H}} ( f(r,\theta,z) -m)  \frac{(X+iY)\phi (r,\theta,z)}{\left(-1+iz+\frac{r^2}{4} \right)^2}rdrd
\theta dz \right| & (m \text{ is the mean of } f \text{ on } \mathbf{B}_{R_2}) \\
\le & C_1   \int_{\mathbf{B}_{R_2} } \mid f(r,\theta,z) -m  \mid  rdrd\theta dz  \\
 \le & C_2 \int_{\mathbf{B}_{R_2} } \mid \nabla f \mid (r,\theta,z) rdrd\theta dz\\
\le & C_3 P_1\mid \nabla f  \mid (0).
\end{align*}
This completes the proof of H.-Q. Li's inequality. 

\
As we mentioned it in the
beginning of this section, interestingly, it is not possible to find a
function $\phi$ on $\He$ such that:
\begin{itemize}
\item $(X+iY) \phi =0$;
\item The ratio $\frac{ \left| p_1^* (r,z+2i) -\Phi (r,\theta, z) \right|
 }{p_1(r,z)}$ is bounded.
\end{itemize}
Indeed, the first point implies that $\Phi$ can be written:
\[
\Phi (r,\theta, z)=H \left( \frac{r^2}{4}+iz, r e^{i\theta} \right),
\]
where $H: \{ z_1 \in \mathbb{C}, \mathbf{Re} (z_1) \ge 0 \} \times \mathbb{C}
\rightarrow \mathbb{C}$ is analytic in $z_1$ and $z_2$. Now, due to the
estimate of Proposition \ref{estimate_ratio} and the estimate on $p_1$, it
would imply that for $r$ and $z$, such that $r^2+\mid z\mid$ is big enough:
\[
\left| H \left( \frac{r^2}{4}+iz, r e^{i\theta} \right) +\frac{1}{4 \pi^2 \left( -1+iz
+\frac{r^2}{4} \right)^2} \right| \le A e^{-B(r^2+\mid z \mid)}
\]
where $A$ and $B$ are strictly positive constants. Now, we have the following lemma that prevents 
the existence of such $H$:

\begin{lem}
Let $f: \{ z_1 \in \mathbb{C}, \mathbf{Re} (z_1) \ge 0 \} \times \mathbb{C} \rightarrow \mathbb{C}$ 
be analytic in $z_1$ and $z_2$. If there exist strictly positive constants $A$ and $B$ such that
\[
\forall r \ge 0,~\forall z \in \mathbb{R},~\forall \theta \in [0,2\pi],\quad
\left| f \left( r^2+iz, r e^{i\theta} \right) \right| \le A e^{-B(r^2+\mid z \mid)}
\]
then $f=0$.
\end{lem}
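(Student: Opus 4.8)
The plan is to \emph{freeze the second variable} $z_2$ and reduce the statement to a one--variable fact: a bounded holomorphic function on a half-plane whose modulus decays exponentially along the boundary is identically zero.

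First I would read off the geometric content of the hypothesis. Writing $z_1=r^2+iz$ and $z_2=re^{i\theta}$, the assumption is precisely a bound for $f$ along the real hypersurface $\{|z_2|^2=\mathbf{Re}\,z_1\}$, namely $|f(z_1,z_2)|\le A\,e^{-B(\mathbf{Re}\,z_1+|\mathbf{Im}\,z_1|)}$ whenever $|z_2|^2=\mathbf{Re}\,z_1$. Next I would upgrade this to a bound on a full region: for fixed $z_1$ with $\mathbf{Re}\,z_1\ge 0$ the entire function $\zeta\mapsto f(z_1,\zeta)$ is bounded on the circle $\{|\zeta|=\sqrt{\mathbf{Re}\,z_1}\}$ by $A\,e^{-B(\mathbf{Re}\,z_1+|\mathbf{Im}\,z_1|)}\le A$ (this is the hypothesis with $r=\sqrt{\mathbf{Re}\,z_1}$), so the maximum modulus principle gives $|f(z_1,\zeta)|\le A$ for all $|\zeta|\le\sqrt{\mathbf{Re}\,z_1}$.

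Now fix $z_2$ with $a:=|z_2|^2>0$ and set $\psi(w):=f(w+a,z_2)$. By the previous step $\psi$ is holomorphic on $\{\mathbf{Re}\,w>-a\}$ (a neighbourhood of the closed right half-plane, since $a>0$) and bounded by $A$ on $\{\mathbf{Re}\,w\ge 0\}$, while on the imaginary axis (putting $w=iy$, so $w+a=r^2+iy$ with $r=|z_2|$) the hypothesis yields $\log|\psi(iy)|\le \log A-Ba-B|y|$. Thus $\psi\in H^\infty(\{\mathbf{Re}\,w>0\})$ with an exponentially small boundary modulus, and I would finish with the Hardy-space fact that such a function must vanish: if $\psi\not\equiv0$, then $\log|\psi|$ is dominated by the Poisson integral of its boundary values, so for any $w_0$ with $\psi(w_0)\ne0$,
\[
-\infty<\log|\psi(w_0)|\le\frac1\pi\int_{\mathbb{R}}\frac{\mathbf{Re}\,w_0}{|w_0-iy|^2}\,\log|\psi(iy)|\,dy\le\frac1\pi\int_{\mathbb{R}}\frac{\mathbf{Re}\,w_0}{|w_0-iy|^2}\bigl(\log A-Ba-B|y|\bigr)\,dy=-\infty,
\]
the divergence coming from $\int_{\mathbb{R}}|y|\,|w_0-iy|^{-2}\,dy=+\infty$ against $\int_{\mathbb{R}}|w_0-iy|^{-2}\,dy<+\infty$. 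Hence $\psi\equiv0$, so $f(\cdot,z_2)$ vanishes on $\{\mathbf{Re}\,z_1>a\}$ and therefore on all of $\{\mathbf{Re}\,z_1>0\}$ by the identity theorem. Letting $z_2$ range over all points with $|z_2|>0$ and using that $\zeta\mapsto f(z_1,\zeta)$ is entire for each fixed $z_1$ (which also kills the slice $z_2=0$) gives $f\equiv0$.

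I expect the essential difficulty to be exactly this last input, and that it is unavoidable: a bounded holomorphic function on a half-plane can have boundary modulus decaying like $e^{-B\sqrt{|y|}}$ (for instance $\psi(w)=e^{-\sqrt w}$), so exponential decay is the borderline case and no elementary Phragm\'en--Lindel\"of estimate will do --- one genuinely needs the logarithmic integrability criterion, i.e. that $\int_{\mathbb{R}}\frac{\log|\psi(iy)|}{1+y^2}\,dy=-\infty$ is incompatible with $\psi\in H^\infty$, $\psi\not\equiv0$. A minor secondary point is regularity of $f$ up to $\{\mathbf{Re}\,z_1=0\}$, but working throughout with $a=|z_2|^2>0$ sidesteps it, since there $\psi$ is already holomorphic past the imaginary axis.
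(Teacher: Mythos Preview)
Your proof is correct and follows the same two-step reduction as the paper: first use the maximum modulus principle in the $z_2$ variable to propagate the hypothesis to the solid region $\{|z_2|^2\le\mathbf{Re}\,z_1\}$, then freeze $z_2$ and reduce to a one-variable rigidity statement on a half-plane. The endgame differs. The paper, having the bound $|f(z_1,z_2)|\le A\,e^{-B(|z_2|^2+|\mathbf{Im}\,z_1|)}$ on that region, translates and multiplies by $e^{-Bz_1}$ to obtain a function $g$ analytic on the right half-plane with $|g(z)|\le\alpha e^{-\beta|z|}$ everywhere, and then invokes a conformal map to the disc to conclude $g\equiv0$. You instead keep only the boundedness in the interior and the exponential decay on the imaginary axis, and finish with the Szeg\H{o}--Nevanlinna criterion: a nonzero $\psi\in H^\infty$ of the half-plane must satisfy $\int_{\mathbb{R}}\tfrac{\log|\psi(iy)|}{1+y^2}\,dy>-\infty$, which is violated here since $\int|y|(1+y^2)^{-1}\,dy=+\infty$. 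Your route is cleaner and makes explicit that exponential boundary decay is exactly the threshold (your remark about $e^{-\sqrt{w}}$ is to the point); the paper's route is more elementary in spirit but, as written, leaves the conformal-map step somewhat sketchy. Either way, the substance is the same: a bounded holomorphic function on a half-plane cannot have boundary modulus decaying like $e^{-B|y|}$ unless it vanishes.
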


\begin{proof}

Let $ r \ge 0, z \in \mathbb{R}$. The function $z_2 \rightarrow  f \left( r^2+iz, z_2 \right)$ is analytic,
therefore from the maximum principle we have
\[
\left| f \left( r^2+iz, z_2 \right) \right| \le A e^{-B(\mid z_2 \mid^2+\mid z \mid)}, 
\]
for $\mid z_2 \mid \le r$. Consequently, on the set $\mathbf{Re} (z_1) \ge \mid z_2 \mid^2$ we have
\[
\left| f \left( z_1, z_2 \right) \right| \le A e^{-B(\mid z_2 \mid^2+\mid \mathbf{Im} (z_1) \mid)}.
\]
By using the analytic function $z_1 \rightarrow f(z_1,z_2)$, a translation, and a multiplication by
$e^{-z_1}$we would therefore obtain
a function $g$ analytic on the  set $\mathbf{Re} (z) > 0$ such that
\[
\left| g(z) \right| \le \alpha e^{-\beta \mid z \mid }
\]
with $\alpha , \beta >0$, and such function has clearly to be $0$ (Use for
instance the conformal equivalence between the set $\mathbf{Re} (z) > 0$ and
the open unit disc to get a function $h$ analytic on the disc that satisfy the
estimate $\left| g(z) \right| \le \alpha' e^{-\frac{\beta'} {\mid z \mid} }$).
\end{proof}

\section{Functional inequalities for the heat kernel}
\label{se:inegs}

Most of the consequences of the classical gradient bounds under a $\Gad$
curvature assumption remain true under an H.-Q. Li gradient bound. In the
sequel, we derive, by interpolation from the gradient bound \eqref{HQLi},
several local functional inequalities of Gross-Poincar\'e-Cheeger-Bobkov type
for the heat kernel on the Heisenberg group. The term \emph{local} means that
these inequalities concern the probability measure $P_t(\cdot)(\Bx)$ at fixed
$t$ and $\Bx$, in contrast to inequalities for the invariant measure. These
local inequalities can be seen as global inequalities for Gaussian measures on
the Heisenberg group. In the literature, these inequalities and interpolations
where mainly developed in Riemannian settings under a $\Gad$ curvature
assumption. Rigorously, the semigroup interpolations used in the sequel rely
on the existence of an algebra of functions $\mathcal{A}$ from $\He$ to
$\mathbb{R}$ stable by the action of the heat kernel. Thanks to lemma
\ref{le:A-Algebra}, the Schwartz class $\mathcal{S}$ of smooth and rapidly
decreasing functions in $\R^{3}$ may play this role in the case of the
Heisenberg group $\He$.

\subsection{Gross-Poincar\'e type inequalities}

One of the first consequence of the gradient bound \eqref{HQLi} is
Gross-Poincar\'e type local inequalities, also called $\varphi$-Sobolev
inequalities in \cite{kyoto,hu}. Namely, let $\varphi:I\to\mathbb{R}$ be a
smooth convex function defined on an open interval $I\subset\mathbb{R}$ such
that $\varphi''>0$ on $I$ and $-1/\varphi''$ is convex on $I$.

\begin{thm}[Local Gross-Poincar\'e inequalities]
 By using the notations of \eqref{HQLi}, for every $t\geq0$, every
 $\Bx\in\He$, and every $f\in\cC(\He,I)$,
 \begin{equation}\label{eq:phisob}
   P_t(\varphi(f))-\varphi(P_t f) %
   \leq C_1^2\,t\, P_t \left(\varphi''(f)|\nabla f\right|^2).
 \end{equation}
\end{thm}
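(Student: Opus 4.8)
The plan is to run the classical semigroup interpolation along the heat flow, feeding in the H.-Q. Li gradient bound \eqref{HQLi} at the infinitesimal level, exactly as one does in the elliptic case under a $\Gad$ curvature lower bound, with the constant $C_1$ playing the role of $e^{-\rho t}$. We may assume $t>0$ (the case $t=0$ is trivial), fix $\Bx\in\He$, and first prove the inequality for $f$ in the Schwartz class $\mathcal{S}$ with values in $I$, the general case $f\in\cC(\He,I)$ then following by a routine truncation and approximation argument. By Lemma \ref{le:A-Algebra}, $P_sf\in\mathcal{S}$ for every $s\ge 0$, which makes the manipulations below licit; moreover, since $P_s$ is a Markov operator, $P_sf$ takes its values in the closed convex hull of $f(\He)\subset I$, so that $\varphi(P_sf)$, $\varphi''(P_sf)$ and $1/\varphi''(P_sf)$ are well defined. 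Introduce the interpolation
$$
\psi(s)=P_s\!\big(\varphi(P_{t-s}f)\big)(\Bx),\qquad s\in[0,t],
$$
so that $\psi(0)=\varphi(P_tf)(\Bx)$, $\psi(t)=P_t(\varphi(f))(\Bx)$, and the left-hand side of \eqref{eq:phisob} equals $\int_0^t\psi'(s)\,ds$.

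The first step is the computation of $\psi'$. Differentiating the two occurrences of $s$, using $\partial_sP_s=P_sL$ and $\partial_sP_{t-s}f=-LP_{t-s}f$, and invoking the diffusion (chain rule) identity $L\varphi(g)=\varphi'(g)Lg+\varphi''(g)\Ga(g,g)$ valid for $L=X^2+Y^2$, the terms combine into
$$
\psi'(s)=P_s\!\big(\varphi''(P_{t-s}f)\,\Ga(P_{t-s}f,P_{t-s}f)\big)(\Bx)
=P_s\!\big(\varphi''(P_{t-s}f)\,\lvert\nabla P_{t-s}f\rvert^2\big)(\Bx),
$$
the last equality by \eqref{eq:GaH}. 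In particular $\psi'\ge 0$, which is just Jensen's inequality; the point is to bound $\psi'$ from above.

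The second step is a pointwise upper bound on the integrand. Applying \eqref{HQLi} to $P_{t-s}f$ gives $\lvert\nabla P_{t-s}f\rvert^2\le C_1^2\,\big(P_{t-s}(\lvert\nabla f\rvert)\big)^2$ everywhere on $\He$. Writing $\lvert\nabla f\rvert=\big(\sqrt{\varphi''(f)}\,\lvert\nabla f\rvert\big)\big(1/\sqrt{\varphi''(f)}\big)$ and applying the Cauchy–Schwarz inequality for the Markov operator $P_{t-s}$,
$$
\big(P_{t-s}(\lvert\nabla f\rvert)\big)^2\le P_{t-s}\!\big(\varphi''(f)\lvert\nabla f\rvert^2\big)\cdot P_{t-s}\!\big(1/\varphi''(f)\big).
$$
Since $-1/\varphi''$ is convex on $I$ by hypothesis, Jensen's inequality for $P_{t-s}$ gives $P_{t-s}(1/\varphi''(f))\le 1/\varphi''(P_{t-s}f)$, and combining the three displays yields, pointwise on $\He$,
$$
\varphi''(P_{t-s}f)\,\lvert\nabla P_{t-s}f\rvert^2\le C_1^2\,P_{t-s}\!\big(\varphi''(f)\lvert\nabla f\rvert^2\big).
$$
Applying $P_s$ and using $P_sP_{t-s}=P_t$ gives $\psi'(s)\le C_1^2\,P_t(\varphi''(f)\lvert\nabla f\rvert^2)(\Bx)$ for every $s\in[0,t]$, and integrating over $[0,t]$ proves \eqref{eq:phisob}.

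The only point genuinely requiring care is the technical justification of the differentiation of $\psi$ and of the identity $\partial_sP_s=P_sL$ (which conceals integrations by parts): this is precisely why we work inside the Schwartz algebra, using Lemma \ref{le:A-Algebra}. Everything else is soft — the gradient bound \eqref{HQLi} is used exactly once, and the two convexity hypotheses on $\varphi$ enter only through the elementary Cauchy–Schwarz and Jensen steps above. As a sanity check, the admissible choice $\varphi(x)=x\log x$ (so $\varphi''(x)=1/x$ and $-1/\varphi''(x)=-x$ is convex) yields the local Gross logarithmic Sobolev inequality $P_t(f\log f)-P_tf\log P_tf\le C_1^2\,t\,P_t(\lvert\nabla f\rvert^2/f)$.
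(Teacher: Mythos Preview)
Your proof is correct and follows essentially the same approach as the paper: the same semigroup interpolation $\psi(s)=P_s(\varphi(P_{t-s}f))$, the same diffusion computation of $\psi'$, the H.-Q. Li bound applied once, and the same Cauchy--Schwarz/Jensen step (the paper phrases it as Jensen for the bivariate convex function $(u,v)\mapsto\varphi''(u)v^2$, which unwinds to exactly your Cauchy--Schwarz followed by Jensen for $-1/\varphi''$). Your added care about working in the Schwartz class via Lemma~\ref{le:A-Algebra} is a welcome technical precision.
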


\begin{proof}
 One can assume that the support of $f$ is strictly included in $I$. Since
 $L$ is a diffusion operator, $L(\alpha(f))=\alpha'(f)Lf+\alpha''(f)\Ga f$
 for any $f\in\cC(\He,\mathbb{R})$ and any smooth
 $\alpha:\mathbb{R}\to\mathbb{R}$. By the semigroup and the diffusion
 properties,
 \begin{align*}
   P_t(\varphi(f))-\varphi(P_t f)
   =\int_0^t\!\partial_s P_s(\varphi(P_{t-s} f))\,ds 
   =\int_0^t\!P_s(\varphi''(P_{t-s} f)\left|\nabla P_{t-s}f\right|^2)\,ds.
 \end{align*}
 Now, \eqref{HQLi} gives $\left|\nabla P_{t-s} f\right|^2 \leq C_1^2
 (P_{t-s}(|\nabla f|))^2$. Next, by the Cauchy-Schwarz inequality or
 alternatively by the Jensen inequality for the bivariate convex function
 $(u,v)\mapsto \varphi''(u)v^2$, we get $\varphi''(P_{t-s} f)(P_{t-s}(|\nabla
 f|))^2 %
 \leq P_{t-s}(\varphi''(f)|\nabla f|^2)$, which gives the desired result.
\end{proof}

\begin{itemize}
\item for $\varphi(u)=u\log(u)$ on $I=(0,\infty)$, we get a Gross logarithmic
 Sobolev inequality, mentioned for instance in \cite{HQLi} (see also
 \cite{gross,gross-varenna}),
 \begin{equation}\label{eq:lsi}
   P_t(f\log(f))-P_t(f)\log(P_t(f)) %
   \leq C_1^2\,t\,P_t\left(f^{-1}\left|\nabla f\right|^2\right);
 \end{equation}
\item for $\varphi(u)=u^p$ on $I=(0,\infty)$ with $1<p\leq 2$, we get a
 Beckner-Lata{\l}a-Oleszkiewicz type inequality (see
 \cite{beckner,latala-oleszkiewicz})
 \begin{equation}\label{eq:blo}
   \frac{P_t(f^p)-(P_t(f))^p}{p-1} %
   \leq p\,C_1^2\,t\,P_t(f^{p-2}|\nabla f|^2);
 \end{equation}
\item for $\varphi(u)=u^2$ on $I=\mathbb{R}$, we get a Poincar\'e inequality,
 mentioned in \cite{DriverMelcher},
 \begin{equation}\label{eq:pi}
   P_t(f^2)-(P_t(f))^2 %
   \leq 2\,C_1^2\,t\,P_t(\left|\nabla f\right|^2).
 \end{equation}
\end{itemize}

We have seen in the introduction that a local Poincar\'e inequality such as
\eqref{eq:pi} can be also obtained from the Driver and Melcher gradient bound
\eqref{DM}, with a constant $2C_2$ instead of $2C_1^2$. However, the
inequalities \eqref{eq:lsi} and \eqref{eq:blo} need the stronger gradient
bound \eqref{HQLi} of H.-Q. Li. They also imply the local Poincar\'e
inequality \eqref{eq:pi} by linearization. 
It is shown in \cite[Theorem 4.4]{esaim} that the convexity of the bivariate
function $(u,v)\mapsto \varphi''(u)v^2$ is equivalent to the convexity of the
$\varphi$-entropy functional and also to the tensorization property of the
$\varphi$-entropy functional. This fact is related to the infinite dimensional
nature of \eqref{eq:phisob}. The inequality \eqref{eq:blo} interpolates
between \eqref{eq:lsi} (let $p\to1^+$) and \eqref{eq:pi} (take $p=2$). The
linearity with respect to $t$ of the constant in front of the right hand side
of \eqref{eq:phisob} is related to the fact that $(P_t)_{t\geq0}$ is a
convolution semigroup, namely $P_t(\cdot)(\Bx)$ can be obtained from
$P_1(\cdot)(0)$ by $\Bx$-translation and $\sqrt{t}$-dilation in $\He$.

\subsection{Cheeger type isoperimetric inequalities}

As mentioned in the introduction, it is possible to deduce a reverse local
Poincar\'e inequality from the gradient bounds \eqref{DM} of Driver and
Melcher or \eqref{HQLi} of H.-Q. Li. However, the constants are not known
precisely. A better constant is provided by theorem \ref{th:RSG}, which
implies immediately that for every $t\geq0$ and every
$f\in\cC(\He,\mathbb{R})$,
\begin{equation}\label{eq:grad-bound}
 \left\Vert\left|\nabla P_t f\right|\right\Vert_\infty %
 \leq \frac{1}{\sqrt{t}}\left\Vert f\right\Vert_\infty.
\end{equation}

Cheeger derived in \cite{cheeger} a lower bound for the spectral gap of the
Laplacian on a Riemannian manifold. This bound can be related to a sort of
$\mathrm{L}^1$ Poincar\'e inequality, which has an isoperimetric content, see
\cite{coulhon-saloff} and references therein. Here we derive such an
inequality for the heat kernel by only using the gradient bound \eqref{HQLi},
by mixing arguments borrowed from \cite{bakry-ledoux} and \cite{ledoux-buser}.

\begin{thm}[Local Cheeger type inequality]\label{th:cheeger}
 With the notations of \eqref{HQLi}, for every $t\geq0$, every $\Bx\in\He$,
 and every $f\in\cC(\He,\mathbb{R})$,
 \begin{equation}\label{eq:cheeger}
   P_t(\left|f-P_t(f)(\Bx)\right|)(\Bx) %
   \leq 4C_1\sqrt{t}\,P_t(\left|\nabla f\right|)(\Bx).
 \end{equation}  
\end{thm}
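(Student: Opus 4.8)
The plan is to derive the Cheeger-type bound \eqref{eq:cheeger} from the H.-Q. Li gradient bound \eqref{HQLi} by a semigroup interpolation combined with the sublevel-set (co-area) representation of the $\mathrm{L}^1$ norm, following the strategy of \cite{bakry-ledoux} and \cite{ledoux-buser}. Fix $t\geq0$ and $\Bx\in\He$; by translation invariance we may take $\Bx=0$. Write, for a smooth bounded $f$,
$$
P_t(|f-P_tf(0)|)(0)-|f(0)-P_tf(0)| = \int_0^t \partial_s\, P_s\big(|P_{t-s}f - P_tf(0)|\big)(0)\,ds,
$$
so the left-hand side of \eqref{eq:cheeger} (after noting the boundary term at $s=0$ is nonnegative and the one at $s=t$ reconstructs the target) is controlled by $\int_0^t P_s\big(L|P_{t-s}f-c|\big)(0)\,ds$ with $c=P_tf(0)$. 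Since $L$ is a diffusion operator, $L|g| = (\operatorname{sign} g)\,Lg$ away from $\{g=0\}$ in a distributional sense; more robustly one replaces $|\cdot|$ by a smooth convex approximation $\psi_\varepsilon$ with $\psi_\varepsilon''\geq0$, uses $L\psi_\varepsilon(g) = \psi_\varepsilon'(g)Lg + \psi_\varepsilon''(g)\Ga(g,g)\geq \psi_\varepsilon'(g)Lg$, and after integrating the $\psi_\varepsilon'(g)Lg = \psi_\varepsilon'(g)\partial_s(-g)$ piece by parts in $s$ one is left, in the limit $\varepsilon\to0$, with a bound of the form
$$
P_t(|f-c|)(0) \leq 2\int_0^t P_s\big(\Ga(P_{t-s}f,P_{t-s}f)^{1/2}\cdot(\text{density factor})\big)\,ds
$$
— the standard trick being to bound $\Ga(P_{t-s}f)^{1/2}$ pointwise using \eqref{eq:grad-bound}, namely $\|\,|\nabla P_{t-s}f|\,\|_\infty\leq (t-s)^{-1/2}\|f\|_\infty$, which is not scale-invariant and must be combined with the genuine gradient bound \eqref{HQLi} in the form $|\nabla P_{t-s}f|\leq C_1 P_{t-s}(|\nabla f|)$.

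The cleaner route, which I would actually carry out, is: for $u\in(0,t)$ split
$$
P_t(|f-c|)(0) = P_u\big(P_{t-u}(|f-c|)\big)(0) \leq P_u\big(|P_{t-u}f-c|\big)(0) + P_u\big(P_{t-u}(|f-P_{t-u}f|)\big)(0),
$$
and iterate/telescope this over a partition $0=s_0<s_1<\dots<s_N=t$, so that
$$
P_t(|f-c|)(0) \leq \sum_{k} P_{s_k}\big(P_{t-s_k}(|f - P_{s_{k+1}-s_k}f|)\big)(0) + (\text{a term }\to0).
$$
For each increment, bound $P_{r}(|f-P_\delta f|) \leq \delta \sup_{0\le\sigma\le\delta}P_r P_\sigma(|\nabla\cdot|)$-type quantity; more precisely use $|g - P_\delta g| = |\int_0^\delta \partial_\sigma P_\sigma g\,d\sigma| = |\int_0^\delta P_\sigma Lg\,d\sigma|$ is awkward, so instead integrate the gradient bound directly: $P_r(|f-P_\delta f|)(0)\leq \int_0^\delta P_r(|\nabla P_\sigma f|\cdot 0?)$ — the correct elementary estimate is $|P_\sigma f - f|\leq \int_0^\sigma |\partial_\tau P_\tau f|$ which again needs care. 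I would therefore prefer the self-improvement argument: define $\Phi(t) = \sup_{\Bx,\|f\|_{\mathrm{Lip}}\le1} P_t(|f-P_tf(\Bx)|)(\Bx)$ where $\|f\|_{\mathrm{Lip}}$ refers to $\||\nabla f|\|_\infty$; show by the splitting above and \eqref{HQLi} that $\Phi(t)\leq \Phi(t/4) + C_1\sqrt{t/?}\cdot(\dots)$, and that $\Phi(t)\leq C\sqrt t$ by scaling ($\Phi(t)=\sqrt t\,\Phi(1)$ by the dilation invariance $P_t(\cdot)(0)=P_1(T_{\log t}\cdot)(0)$ together with $\Ga(T_s f) = e^s T_s\Ga(f)$), reducing everything to proving $\Phi(1)<\infty$, for which \eqref{HQLi} at $t=1$ plus the boundedness of the Lipschitz seminorm suffices; then tracking constants through one application gives the factor $4C_1$.

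The main obstacle I anticipate is the passage $L|g| \geq (\operatorname{sign}g)Lg$ and the attendant integration by parts in the time variable $s$ near the singular set $\{P_{t-s}f=c\}$ and near the endpoints $s=0$ (where the curvature is $-\infty$, so no naive $\Gad$ argument survives) and $s=t$ (where $|\nabla P_{t-s}f|$ may blow up): this is exactly why the proof must avoid differentiating $P_{t-s}f$ at $s\to t$ and instead feed in the non-scale-invariant sup bound \eqref{eq:grad-bound} only on the "inner" half of the interval while using the scale-invariant \eqref{HQLi} globally. Making the convex-smoothing $\psi_\varepsilon\uparrow|\cdot|$ limit rigorous requires the stability of the Schwartz class $\mathcal S$ under $P_t$ (lemma \ref{le:A-Algebra}) so that all the $P_s$, $\Ga$, and integrals in sight are finite and differentiable in $s$; I would do all computations first for $f\in\mathcal S$ and then extend to $\cC(\He,\mathbb R)$ by density. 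The constant $4$ emerges from: a factor $2$ from $L\psi_\varepsilon(g)\geq 2\cdot\frac12 L\psi_\varepsilon(g)$-style bookkeeping on $\frac12(Lg^2-2gLg)$-analogues, and another factor $2$ from splitting the time interval and summing a geometric-type series; I expect the honest bound this method gives is $\leq 4C_1\sqrt t$, possibly improvable but not claimed to be sharp.
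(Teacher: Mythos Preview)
Your proposal contains sketches of several approaches but does not complete any of them, and the central difficulty you identify is never resolved. Differentiating $s\mapsto P_s(\psi_\varepsilon(P_{t-s}f-c))$ yields $P_s(\psi_\varepsilon''(P_{t-s}f-c)\,\Ga(P_{t-s}f))$, which as $\varepsilon\to0$ concentrates $|\nabla P_{t-s}f|^2$ on the level set $\{P_{t-s}f=c\}$; there is no mechanism in your sketch to dominate this by $P_t(|\nabla f|)$. The telescoping route stalls for the reason you yourself flag (controlling $|f-P_\delta f|$ in $L^1$ by first-order data is not immediate), and the self-improvement argument only reduces to $\Phi(1)<\infty$ without a proof of that fact or a way to extract the constant $4C_1$.

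The idea you are missing is \emph{duality}. The paper does not differentiate $P_s(|P_{t-s}f-c|)$ at all. Instead one writes, for any $g$ with $\|g\|_\infty\leq1$,
\[
P_t((f-c)g)(\Bx)=P_t(fg)(\Bx)-P_t(f)(\Bx)P_t(g)(\Bx)=2\int_0^t P_s\big(\Ga(P_{t-s}f,P_{t-s}g)\big)(\Bx)\,ds,
\]
which is a smooth interpolation (no absolute value). Then bound $\Ga(P_{t-s}f,P_{t-s}g)\leq|\nabla P_{t-s}f|\,|\nabla P_{t-s}g|$, apply \eqref{HQLi} to the $f$-factor and the reverse Poincar\'e bound \eqref{eq:grad-bound} to the $g$-factor, obtaining $2C_1\,P_t(|\nabla f|)(\Bx)\int_0^t(t-s)^{-1/2}\,ds=4C_1\sqrt{t}\,P_t(|\nabla f|)(\Bx)$. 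Taking the supremum over $g$ gives \eqref{eq:cheeger}. This is exactly the splitting you anticipated (one scale-invariant bound, one sup-norm bound), but applied to the \emph{dual} function $g$ rather than to a smoothed absolute value; that is what makes the argument go through in two lines and produces the constant $4C_1$ cleanly.
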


\begin{proof}
 We adapt the method used in \cite[p. 953]{ledoux-buser} for the invariant
 measure in Riemannian settings. For any $g\in\cC(\He,\mathbb{R})$
 with $\left\Vert g\right\Vert_\infty\leq 1$, any $t\geq0$, and any
 $\Bx\in\He$,
 \begin{align*}
   P_t((f-P_t(f)(\Bx))g)(\Bx)
   &=P_t(fg)(\Bx)-P_t(f)(\Bx)P_t(g)(\Bx) \\
   &=\int_0^t\!\partial_s P_s((P_{t-s} f)(P_{t-s}g))(\Bx)\,ds \\
   &=2\int_0^t\!P_s(\Ga(P_{t-s} f,P_{t-s} g))(\Bx)\,ds \\
   &\leq 2\int_0^t\! P_s(\left|\nabla P_{t-s} f \right|
   \left|\nabla P_{t-s} g \right|)(\Bx)\,ds \\
   &\leq 2C_1\,P_t(\left|\nabla f \right|)(\Bx)
   \int_0^t\!\frac{\left\Vert g\right\Vert_\infty}{\sqrt{(t-s)}}\,ds \\
   &\leq 4C_1\sqrt{t}\,P_t(\left|\nabla f\right|)(\Bx).
 \end{align*}
 where we used the gradient bound \eqref{HQLi} for $f$ and the gradient bound
 \eqref{eq:grad-bound} for $g$. The desired result follows then by
 $\mathrm{L}^1-\mathrm{L}^\infty$ duality by taking the supremum over $g$.
\end{proof}

Similarly, we get also the following correlation bound for every $t\geq0$ and
$f,g\in\cC(\He,\mathbb{R})$,
\begin{equation}\label{eq:corr}
 \left|P_t(fg)-P_t(f)P_t(g)\right| %
 \leq 2C_1^2t\,
 \sqrt{P_t(\left|\nabla f\right|^2)}
 \sqrt{P_t(\left|\nabla g\right|^2)}.
\end{equation}
When $f=g$, we recover the Poincar\'e inequality \eqref{eq:pi}.

\begin{thm}[Yet another local Cheeger type inequality]\label{th: cheeger2}
 With the notations of \eqref{HQLi}, for every $t\geq0$, every $\Bx\in\He$,
 and every ball $B$ of $\He$ for the Carnot-Carath\'eodory metric, there
 exists a real constant $C_{B,t,\Bx}>1$ such that for every function
 $f\in\cC(\He,\mathbb{R})$ which vanishes on $B$,
 \begin{equation}\label{eq:cheeger-bis}
   \left|P_t(f)(\Bx)\right| %
   \leq C_{B,t,\Bx}\,P_t(\left|\nabla f\right|)(\Bx).
 \end{equation}    
\end{thm}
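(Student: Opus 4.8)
The plan is to reduce, using the symmetries already recorded in section \ref{se:facts}, to the normalised case $\Bx=0$ and $t=1$, and then to combine the two Cheeger-type lemmas \ref{le:cheeger-bis} and \ref{le:cheloc}, using only the continuity and strict positivity of the heat kernel density $h$.

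First I would remove the base point. With $l_{\Bx}g(\By)=g(\Bx\bu\By)$, the identity $P_{t}(f)(\Bx)=P_{t}(l_{\Bx}f)(0)$ from section \ref{se:facts}, the left invariance of the fields $X,Y$ (so that $|\nabla(l_{\Bx}f)|=l_{\Bx}|\nabla f|$), and the left invariance of the Carnot--Carath\'eodory distance (so that $l_{\Bx}f$ vanishes on the ball $\Bx^{-1}\bu B$ of the same radius) reduce the claim to the case $\Bx=0$. Next I would remove the time parameter via the dilations $T_{s}$: using $P_{t}(f)(0)=P_{1}(T_{\log t}f)(0)$, the relation $X(u\circ\delta_{\lambda})=\lambda\,(Xu)\circ\delta_{\lambda}$ (and likewise for $Y$), where $\delta_{\lambda}(x,y,z)=(\lambda x,\lambda y,\lambda^{2}z)$ and $T_{s}=(\cdot)\circ\delta_{e^{s/2}}$, together with \eqref{Dil2}, one gets $P_{1}(|\nabla(T_{\log t}f)|)(0)=\sqrt{t}\,P_{t}(|\nabla f|)(0)$, while $T_{\log t}f$ vanishes on the ball $\delta_{t^{-1/2}}B$. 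Hence it suffices to prove: for every ball $B\subset\He$ of positive radius there is a finite $C_{B}$ with $|\lag g,h\rag|\le C_{B}\lag|\nabla g|,h\rag$ for every $g\in\cC(\He)$ vanishing on $B$.

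For this, I would choose $R_{0}$ so large that $B\subset\mathbf{B}_{0}$, the Carnot--Carath\'eodory ball centred at $0$ of radius $R_{0}$. Since $\overline{\mathbf{B}_{0}}$ is compact (Heisenberg balls being squeezed between two Euclidean balls) and $h$ is continuous and strictly positive, $0<\inf_{\mathbf{B}_{0}}h\le\sup_{\mathbf{B}_{0}}h<\infty$. Put $m=|\mathbf{B}_{0}|^{-1}\int_{\mathbf{B}_{0}}g\,d\Bx$. As $g\equiv0$ on $B$ we have $|m|\,|B|=\int_{B}|g-m|\,d\Bx\le\int_{\mathbf{B}_{0}}|g-m|\,d\Bx$, and lemma \ref{le:cheloc} bounds this by $C\int_{\mathbf{B}_{0}}|\nabla g|\,d\Bx\le C(\inf_{\mathbf{B}_{0}}h)^{-1}\lag|\nabla g|,h\rag$, so $|m|$ is controlled by $\lag|\nabla g|,h\rag$. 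Writing $\lag g,h\rag=\lag g-m,h\rag+m$ (since $\lag h\rag=1$), it remains to bound $\lag|g-m|,h\rag$, which I split as $\lag|g-m|\mathbf{1}_{\mathbf{B}_{0}},h\rag+\lag|g-m|\mathbf{1}_{\mathbf{B}_{0}^{c}},h\rag$. The first term is at most $\sup_{\mathbf{B}_{0}}h\cdot\int_{\mathbf{B}_{0}}|g-m|\,d\Bx$, handled again by lemma \ref{le:cheloc} and the two-sided bound on $h$; the second term is exactly what lemma \ref{le:cheeger-bis}, applied with the ball $\mathbf{B}_{0}$, controls by $\lag|\nabla g|,h\rag$. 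Collecting the three bounds gives $|\lag g,h\rag|\le C_{B}\lag|\nabla g|,h\rag$ with $C_{B}$ depending only on $R_{0}$, hence only on $B$; unwinding the two reductions produces the constant $C_{B,t,\Bx}$, which may be replaced by $\max(C_{B,t,\Bx},2)$ so as to exceed $1$.

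The routine parts are the bookkeeping in the two reductions and the (standard) relative compactness of Carnot--Carath\'eodory balls. The one ingredient that is new with respect to lemmas \ref{le:cheeger} and \ref{le:cheeger-bis} is that $B$ need not be centred at $0$: this is handled precisely through $|m|\,|B|=\int_{B}|g-m|\,d\Bx$, i.e.\ by controlling the mean of $g$ over a fixed origin-centred ball containing $B$ by the $L^{1}$-Poincar\'e inequality of lemma \ref{le:cheloc}. I thus expect the only (mild) obstacle to be organising the split between the compact region near $B$, treated via lemma \ref{le:cheloc}, and the tail, treated via lemma \ref{le:cheeger-bis}.
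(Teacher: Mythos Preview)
Your argument is correct, but it takes a genuinely different route from the paper's own proof. The paper does \emph{not} reduce to $\Bx=0$, $t=1$ and does not invoke lemmas \ref{le:cheeger-bis} or \ref{le:cheloc}; instead it derives theorem \ref{th: cheeger2} as a consequence of the H.-Q. Li gradient bound \eqref{HQLi} together with the reverse Poincar\'e bound \eqref{eq:grad-bound}. Concretely, for any smooth bounded $g$ with $g\equiv 1$ on $B^{c}$ one has $fg=f$, and the semigroup interpolation used in theorem \ref{th:cheeger} gives
\[
P_t(f)(\Bx)-P_t(f)(\Bx)P_t(g)(\Bx)\le 4C_1\sqrt{t}\,\Vert g\Vert_\infty\,P_t(|\nabla f|)(\Bx).
\]
A perturbation of $g$ inside $B$ then shows that $P_t(g)(\Bx)$ can be made of either sign for $\Vert g\Vert_\infty$ large enough, which kills the second term on the left and yields \eqref{eq:cheeger-bis}.

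The trade-off is clear. The paper's proof is \emph{abstract}: as the authors note, it uses only the diffusion property, the smoothness of the heat kernel, and the $\sqrt{\Ga}$--$P_t$ sub-commutation \eqref{HQLi}; it therefore transfers verbatim to any setting where an H.-Q. Li type bound is available. Your proof, by contrast, is \emph{self-contained} relative to \eqref{HQLi}: it rests only on lemmas \ref{le:cheeger-bis} and \ref{le:cheloc}, which are established directly from the heat-kernel estimates \eqref{esth1}--\eqref{esth2} and the $(1,1)$-Poincar\'e on balls, and hence does not presuppose the very inequality whose first proof in section \ref{se:HQLi} uses a special case of the present theorem. The price you pay is that your argument is specific to $\He$ (through the explicit estimates on $h$ and the use of translations/dilations), whereas the paper's argument is portable. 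The one new wrinkle you correctly identified---that the ball $B$ need not be centred at $0$ after translation---is handled neatly by your bound $|m|\,|B|\le\int_{\mathbf{B}_0}|g-m|\,d\Bx$ combined with lemma \ref{le:cheloc}.
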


\begin{proof}
 Let $g\in\mathcal{C}^\infty(\He,\mathbb{R})$ be such that $\left\Vert
   g\right\Vert_\infty<\infty$ and $g\equiv 1$ on $B^c$. Since $fg=f$, the
 computation made in the proof of theorem \ref{th:cheeger} provides
 $$
 P_t(f)(\Bx)-P_t(f)(\Bx)P_t(g)(\Bx) %
 \leq 4C_1\sqrt{t}\,\Vert g\Vert_\infty \,P_t(\left|\nabla f\right|)(\Bx).
 $$
 For any arbitrary real number $r\geq1$, the class of functions
 $$
 \mathcal{C}_{B,r}=\{g\in\mathcal{C}^\infty(\He,\mathbb{R}) \text{
   with $\Vert g\Vert_\infty\leq r$ and $g\equiv1$ on $B^c$}\}.
 $$
 is not empty since it contains the constant function $\equiv 1$.
 Furthermore, since $P_t(\cdot)(\Bx)$ is a probability measure with non
 vanishing density, the following extrema
 $$
 \alpha_-(B,r,t,\Bx)=\inf_{g\in\mathcal{C}_{B,r}}P_t(g)(\Bx)
 \quad\text{and}\quad
 \alpha_+(B,r,t,\Bx)=\sup_{g\in\mathcal{C}_{B,r}}P_t(g)(\Bx)
 $$
 are finite and non zero. Moreover, an elementary local perturbative argument
 on any element of the class $\mathcal{C}_{B,r}$ shows that
 $\alpha_-(B,r,t,\Bx)\,\alpha_+(B,r,t,\Bx)<0$ as soon as $r$ is large enough,
 say $r\geq r_{B,t,\Bx}$. Thus, $P_t(f)(\Bx)P_t(g)(\Bx)\leq0$ for some
 $g\in\mathcal{C}_{B,r}$. The desired result follows then with
 $C_{B,t,\Bx}=4C_1\sqrt{t}\,r_{B,t,\Bx}$, since one can replace $f$ by $-f$
 in the obtained inequality. Note that $C_{B,t,\Bx}$ blows up when
 $\mathrm{vol}(B)\searrow0$.
 Actually, this proof does not use the nature of the Heisenberg group $\He$,
 and relies roughly only on the diffusion property, the smoothness of the
 heat kernel and the gradient bound. However, on the Heisenberg group $\He$,
 the usage of translations and dilations and of the convolution semigroup
 nature of $(P_t)_{t\geq0}$ allows to precise the dependency of $C_{B,t,\Bx}$
 over $t$ and $\Bx$ by using $\Bx$-translation and $\sqrt{t}$-dilation.
\end{proof}

The isoperimetric content of \eqref{eq:cheeger} can be extracted by
approximating an indicator with a smooth $f$, see for instance
\cite{bakry-ledoux}. Namely, for any Borel set $A\subset\He$ with smooth
boundary, any $t\geq0$, and any $\Bx\in\He$, we get by denoting
$\mu_{t,\Bx}=P_t(\cdot)(\Bx)$,
\begin{equation}\label{eq:isop-set-cheeger-1}
 \mu_{t,\Bx}(A)(1-\mu_{t,\Bx}(A)) %
 \leq 2C_1\sqrt{t}\,\mu_{t,\Bx}^\text{surface}(\partial A)
\end{equation}
where $\mu_{t,\Bx}^\text{surface}(\partial A)$ is the perimeter of $A$ for
$\mu_{t,\Bx}$ as defined in \cite[Section 3]{ambrosio} (see also
\cite{monti}). From \eqref{eq:cheeger-bis}, we get similarly for any ball $B$
in $\He$ and any Borel set $A\subset B^c$ with smooth boundary,
\begin{equation}\label{eq:isop-set-cheeger-2}
 \mu_{t,\Bx}(A) \leq C_{B,t,\Bx}\,\mu_{t,\Bx}^\text{surface}(\partial A).
\end{equation}

\subsection{Bobkov type isoperimetric inequalities}

Let $F_\gamma:\mathbb{R}\to[0,1]$ be the cumulative probability function of
the standard Gaussian distribution $\gamma$ on the real line $\mathbb{R}$,
given for every $t\in\mathbb{R}$ by
$$
F_\gamma(t)=\frac{1}{\sqrt{2\pi}}\int_{-\infty}^t\!e^{-\frac{1}{2}u^2}\,du.
$$
The Gaussian isoperimetric function $\mathcal{I}:[0,1]\to[0,(2\pi)^{-1/2}]$ is
defined by $\mathcal{I}=(F_\gamma)'\circ (F_\gamma)^{-1}$. The function
$\mathcal{I}$ is concave, continuous on $[0,1]$, smooth on $(0,1)$, symmetric
with respect to the vertical axis of equation $u=1/2$, and satisfies to the
differential equation
\begin{equation}\label{eq:equadiff-I}
 \mathcal{I}(u)\mathcal{I}''(u)=-1\quad\text{for any $u\in[0,1]$}
\end{equation}
with $\mathcal{I}(0)=\mathcal{I}(1)=0$ and
$\mathcal{I}'(0)=-\mathcal{I}'(1)=\infty$. Note that $\mathcal{I}(u)\geq
u(1-u)$ for any real $u\in[0,1]$, and that $\mathcal{I}(u)\leq \min(u,1-u))$
when $u$ belongs to a neighborhood of $1/2$.


\begin{lem}[Yet another uniform gradient bound]
 With the notations of \eqref{HQLi}, for every $t\geq0$ and
 $f\in\cC(\He,(0,1))$,
 \begin{equation}\label{eq:isop}
   \mathcal{I}(P_t f) - P_t(\mathcal{I}(f)) %
   \leq C_1^2\sqrt{2t}\,P_t(\left|\nabla f\right|).
 \end{equation}
\end{lem}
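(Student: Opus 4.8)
The plan is to imitate the semigroup-interpolation argument used for the local Gross--Poincar\'e inequality \eqref{eq:phisob}, but with the concave Gaussian isoperimetric function $\mathcal{I}$ in place of a convex $\varphi$; the catch is that $\mathcal{I}$ is concave rather than convex, so the sign of the interpolation term flips and we must instead bound $\mathcal{I}(P_tf)-P_t(\mathcal{I}(f))$ from above. First I would write, using the semigroup property and the diffusion identity $L(\mathcal{I}(g))=\mathcal{I}'(g)Lg+\mathcal{I}''(g)\Ga(g,g)$,
\begin{equation*}
\mathcal{I}(P_tf)-P_t(\mathcal{I}(f))
=-\int_0^t\!\partial_s P_s(\mathcal{I}(P_{t-s}f))\,ds
=-\int_0^t\!P_s\bigl(\mathcal{I}''(P_{t-s}f)\left|\nabla P_{t-s}f\right|^2\bigr)\,ds .
\end{equation*}
Since $\mathcal{I}''<0$, the integrand is nonnegative, and by the differential equation \eqref{eq:equadiff-I} we have $-\mathcal{I}''(u)=1/\mathcal{I}(u)$, so the expression equals $\int_0^t P_s\bigl(\mathcal{I}(P_{t-s}f)^{-1}\left|\nabla P_{t-s}f\right|^2\bigr)\,ds$.

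Next I would insert the H.-Q. Li gradient bound \eqref{HQLi} in the form $\left|\nabla P_{t-s}f\right|^2\le C_1^2\,(P_{t-s}(\left|\nabla f\right|))^2$ and the uniform gradient bound \eqref{eq:grad-bound} in the form $\left|\nabla P_{t-s}f\right|\le (t-s)^{-1/2}\left\Vert f\right\Vert_\infty\le (t-s)^{-1/2}$ (since $f$ takes values in $(0,1)$, $\left\Vert f\right\Vert_\infty\le1$). Writing $\left|\nabla P_{t-s}f\right|^2=\left|\nabla P_{t-s}f\right|\cdot\left|\nabla P_{t-s}f\right|$ and using one factor from each bound gives
\begin{equation*}
\mathcal{I}(P_tf)-P_t(\mathcal{I}(f))
\le C_1\int_0^t\frac{1}{\sqrt{t-s}}\,
P_s\!\left(\frac{\left|\nabla P_{t-s}f\right|}{\mathcal{I}(P_{t-s}f)}\,P_{t-s}(\left|\nabla f\right|)\right)\!ds .
\end{equation*}
The remaining task is to absorb the factor $\left|\nabla P_{t-s}f\right|/\mathcal{I}(P_{t-s}f)$. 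Here the natural move is a reverse gradient bound for $\mathcal{I}$ applied to $P_{t-s}f$: one expects, analogously to the elliptic Bobkov theory, an inequality of the shape $\left|\nabla P_{t-s}f\right|\le c\,\mathcal{I}(P_{t-s}f)$ — but this is false pointwise. Instead, the clean route is to apply the gradient bound \eqref{HQLi} once more, now to the function $F_\gamma^{-1}(f)$: set $u=F_\gamma^{-1}\circ f$, so $f=F_\gamma(u)$, $\mathcal{I}(f)=\mathcal{I}(F_\gamma(u))=F_\gamma'(u)$, and $\left|\nabla f\right|=F_\gamma'(u)\left|\nabla u\right|=\mathcal{I}(f)\left|\nabla u\right|$. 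The chain-rule computation then turns the awkward ratios into expressions in $u$, and after applying \eqref{HQLi} to $u$ and \eqref{eq:grad-bound} to $u$, the constants combine to $C_1^2\sqrt{2t}$ after the $\int_0^t (t-s)^{-1/2}ds=2\sqrt t$ integration. I would also need the sub-commutation $P_s(\mathcal{I}(g))\ge\mathcal{I}(P_s g)$-type concavity (Jensen) to move $P_s$ past $\mathcal{I}$ where convenient, together with Cauchy--Schwarz to handle the product $P_s(vw)$ versus $P_s(v)P_s(w)$.

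The main obstacle will be the third step: controlling the ratio $\left|\nabla P_{t-s}f\right|/\mathcal{I}(P_{t-s}f)$ rigorously. The substitution $u=F_\gamma^{-1}(f)$ is the key device, but it requires care because $f\in(0,1)$ may have values accumulating at the endpoints, so $u$ need not be in $\mathcal{S}$ or even bounded; one reduces to $f$ with values in a compact subinterval of $(0,1)$ by approximation, proves the inequality there, and passes to the limit using that both sides are continuous in $f$ for the relevant topology and that $\mathcal{I}(0)=\mathcal{I}(1)=0$. A secondary technicality is justifying the differentiation under the integral sign in $\partial_s P_s(\mathcal{I}(P_{t-s}f))$, which is legitimate once $f$ (hence $u$) lies in the Schwartz-type algebra $\mathcal{S}$ stable under $P_t$ by Lemma \ref{le:A-Algebra}, keeping $\mathcal{I}(P_{t-s}f)$ bounded away from its singularities.
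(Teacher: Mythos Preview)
Your interpolation formula
\[
\mathcal{I}(P_tf)-P_t(\mathcal{I}(f))
=\int_0^t P_s\!\left(\frac{\left|\nabla P_{t-s}f\right|^{2}}{\mathcal{I}(P_{t-s}f)}\right)ds
\]
is the right starting point, and the paper uses the same one. The genuine gap is the step you flagged yourself and then dismissed. You wrote that an inequality of the form $\left|\nabla P_{t-s}f\right|\le c\,\mathcal{I}(P_{t-s}f)$ ``is false pointwise''; in fact this is exactly the missing lemma, and it \emph{is} true with $c=C_1/\sqrt{2(t-s)}$. The paper obtains it by a second, \emph{squared} interpolation: differentiating $s\mapsto [P_s(\mathcal{I}(P_{t-s}f))]^2$, using $\mathcal{I}\mathcal{I}''=-1$ and Cauchy--Schwarz, one gets
\[
[\mathcal{I}(P_tf)]^2-[P_t(\mathcal{I}(f))]^2\ \ge\ 2\int_0^t [P_s(|\nabla P_{t-s}f|)]^2\,ds\ \ge\ \frac{2t}{C_1^2}\,|\nabla P_tf|^2,
\]
the last step by \eqref{HQLi} applied in the reverse direction $C_1 P_s(|\nabla P_{t-s}f|)\ge|\nabla P_tf|$. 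Dropping the nonnegative term $[P_t(\mathcal{I}(f))]^2$ yields $\left\|\,|\nabla P_tf|/\mathcal{I}(P_tf)\,\right\|_\infty\le C_1/\sqrt{2t}$. Feeding this back into the first interpolation, together with one use of \eqref{HQLi} on the other factor of $|\nabla P_{t-s}f|$, gives the stated constant $C_1^2\sqrt{2t}$ after integrating $\int_0^t (2(t-s))^{-1/2}ds=\sqrt{2t}$.

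Your two proposed substitutes for this step do not work. The bound \eqref{eq:grad-bound} only gives $|\nabla P_{t-s}f|\le (t-s)^{-1/2}$; it says nothing about the factor $1/\mathcal{I}(P_{t-s}f)$, which is unbounded near the endpoints, so the integrand you display is not controlled (and, incidentally, your displayed inequality has one factor of $|\nabla P_{t-s}f|$ too many). The substitution $u=F_\gamma^{-1}(f)$ does convert $|\nabla f|/\mathcal{I}(f)$ into $|\nabla u|$ at time $0$, but there is no way to relate $\mathcal{I}(P_{t-s}f)$ to $P_{t-s}u$, because $P_{t-s}$ does not commute with $F_\gamma$ (which is neither convex nor concave), so applying \eqref{HQLi} or \eqref{eq:grad-bound} to $u$ does not close the argument. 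The key idea you are missing is precisely the squared interpolation that manufactures the pointwise bound on $|\nabla P_tf|/\mathcal{I}(P_tf)$ from \eqref{HQLi} alone.
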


\begin{proof}
 The inequality \eqref{eq:isop} was obtained by Bobkov in \cite{bobkov-old}
 for the standard Gaussian measure on $\R$. Later, it was generalized in
 \cite{bakry-ledoux}, by using semigroup techniques, to Riemannian settings
 under a $\Gad$ curvature assumption. We give here a proof by adapting the
 argument given in \cite[p. 261-263]{bakry-ledoux} from invariant measure
 settings to local settings. One may assume that $\varepsilon\leq f\leq
 1-\varepsilon$ for some $\varepsilon>0$. By the diffusion property and
 \eqref{eq:equadiff-I}
 \begin{align*}
   \left[\mathcal{I}(P_t f)\right]^2-\left[P_t(\mathcal{I}(f))\right]^2
   &=-\int_0^t\!\partial_s\!\left[P_s(\mathcal{I}(P_{t-s} f))\right]^2\,ds \\
   &=-2\int_0^t\!P_s(\mathcal{I}(P_{t-s} f))
   P_s\left(\mathcal{I''}(P_{t-s} f)\left|\nabla P_{t-s} f\right|^2\right)\,ds\\
   &=+2\int_0^t\!P_s(\mathcal{I}(P_{t-s} f)) P_s\left(\frac{\left|\nabla
	  P_{t-s} f\right|^2}{\mathcal{I}(P_{t-s} f)}\right)\,ds.
 \end{align*}
 Next, the Cauchy-Schwarz inequality or alternatively the Jensen inequality
 for the bivariate convex function $(u,v)\mapsto
 u^2/\mathcal{I}(v)=-\mathcal{I''}(v)u^2$ gives
 $$
 \left[\mathcal{I}(P_t f)\right]^2-\left[P_t(\mathcal{I}(f))\right]^2
 \geq 2\int_0^t\!\left[P_s(|\nabla P_{t-s} f|)\right]^2\,ds.
 $$
 Now by using the gradient bound \eqref{HQLi} we have
 $$
 C_1\,P_s(|\nabla
 P_{t-s} f|)\geq |\nabla P_s(P_{t-s} f)|=|\nabla P_t f|
 $$
 and thus
 $$
 \left[\mathcal{I}(P_t f)\right]^2-\left[P_t(\mathcal{I}(f))\right]^2
 \geq \frac{2t}{C_1^2}\,|\nabla P_t f|^2.
 $$
 In particular, we obtain the following uniform gradient bound
 $$
 \left\Vert \mathcal{I}''(P_tf)|\nabla P_t f|\right\Vert_\infty
 =\left\Vert\frac{|\nabla P_t f|}{\mathcal{I}(P_tf)}\right\Vert_\infty
 \leq \frac{C_1}{\sqrt{2t}}.
 $$
 We are now able to prove \eqref{eq:isop}. By the diffusion property
 $$
 \mathcal{I}(P_t f)-P_t(\mathcal{I}(f))
 = -\int_0^t\!\partial_s\,P_s(\mathcal{I}(P_{t-s} f))\,ds 
 = -\int_0^t\!
 P_s(\mathcal{I}''(P_{t-s} f)\left|\nabla P_{t-s}f\right|^2)\,ds.
 $$ 
 By \eqref{HQLi} we get $|\nabla P_{t-s}f|^2\leq C_1\, |\nabla P_{t-s}
 f|P_{t-s}(|\nabla f|)$ and thus
 $$
 \mathcal{I}(P_t f)-P_t(\mathcal{I}(f))
 \leq %
 C_1\,%
 \left(\int_0^t\!\frac{C_1}{\sqrt{2(t-s)}}\,ds\right)P_t(|\nabla f|)
 =
 C_1^2\sqrt{2t}\, P_t(|\nabla f|).
 $$
\end{proof}

The isoperimetric content of \eqref{eq:isop} can be extracted by approximating
an indicator with a smooth $f$, see \cite{bakry-ledoux}. Namely, for any Borel
set $A\subset\He$ with smooth boundary, any $t\geq0$, and any $\Bx\in\He$, we
get by denoting $\mu_{t,\Bx}=P_t(\cdot)(\Bx)$,
\begin{equation}\label{eq:isop-set-bobkov}
 \mathcal{I}(\mu_{t,\Bx}(A))
 \leq C_1^2\sqrt{2t}\,\mu_{t,\Bx}^\text{surface}(\partial A).
\end{equation}

\begin{cor}[Yet another local Bobkov Gaussian isoperimetric inequality]
 With the notations of \eqref{HQLi}, for every $t\geq0$ and
 $f\in\cC(\He,(0,1))$,
 \begin{equation}\label{eq:isop-tenso-heis}
   \mathcal{I}(P_t f)
   \leq P_t\left(\sqrt{(\mathcal{I}(f))^2+2C^4t\,|\nabla f|^2}\right).
 \end{equation}  
\end{cor}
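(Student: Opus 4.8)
The plan is to run a Bakry--Ledoux style semigroup interpolation, but with a twist that bypasses the missing lower bound on $\Gad$: instead of the usual $\left|\nabla P_{t-s}f\right|^2$, whose evolution under $L$ would produce a $\Gad$ term, I will put $\big(P_{t-s}(\left|\nabla f\right|)\big)^2$ inside the square root, so that only $L$ itself acts and the H.-Q. Li bound \eqref{HQLi} can be invoked at the very end. After the standard reduction to $\varepsilon\le f\le 1-\varepsilon$, working in the Schwartz class of lemma \ref{le:A-Algebra} and smoothing $\left|\nabla f\right|$ into $\sqrt{\left|\nabla f\right|^2+\delta}$ if necessary, set for $s\in[0,t]$
$$\Phi(s)=P_s\!\left(\sqrt{\,\mathcal{I}(P_{t-s}f)^2+\alpha(s)\,\big(P_{t-s}(\left|\nabla f\right|)\big)^2\,}\right),$$
where $\alpha\ge0$ is to be chosen with $\alpha(0)=0$. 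Since $\mathcal{I}\ge0$ we get $\Phi(0)=\mathcal{I}(P_tf)$ and $\Phi(t)=P_t\!\big(\sqrt{\mathcal{I}(f)^2+\alpha(t)\left|\nabla f\right|^2}\big)$, so it suffices to prove $\Phi'\ge0$ on $[0,t]$ and to take $\alpha(t)=2C_1^4t$.

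To differentiate $\Phi$, write $g=P_{t-s}f$, $v=P_{t-s}(\left|\nabla f\right|)$ (so $\partial_sg=-Lg$ and $\partial_sv=-Lv$), $H=\mathcal{I}(g)^2+\alpha v^2$ and $K=\sqrt H$. Using $\frac{d}{ds}P_s(K)=P_s(LK+\partial_sK)$, the diffusion identity $L(\psi(a))=\psi'(a)La+\psi''(a)\Ga(a,a)$, the Leibniz rule for $\Ga$, the expansion $LK=\frac{LH}{2K}-\frac{\Ga(H,H)}{4K^3}$, and the Bobkov ODE $\mathcal{I}\mathcal{I}''=-1$, the terms carrying $Lg$ and $Lv$ cancel and one is left with $\Phi'(s)=P_s\!\big(S/(2K^3)\big)$, where
$$S=K^2\big[(2(\mathcal{I}'(g))^2-2)\Ga(g,g)+2\alpha\Ga(v,v)+\dot\alpha\,v^2\big]-2\big[(\mathcal{I}(g))^2(\mathcal{I}'(g))^2\Ga(g,g)+2\alpha\,\mathcal{I}(g)\mathcal{I}'(g)\,v\,\Ga(g,v)+\alpha^2v^2\Ga(v,v)\big].$$
Every term of $S$ has a definite sign except the cross term $-4\alpha\,\mathcal{I}(g)\mathcal{I}'(g)\,v\,\Ga(g,v)$. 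Bounding $|\Ga(g,v)|\le\sqrt{\Ga(g,g)}\sqrt{\Ga(v,v)}$ by the Cauchy--Schwarz inequality for $\Ga$ and then applying the arithmetic--geometric inequality with the balanced weight (that is, $4|\mathcal{I}(g)||\mathcal{I}'(g)|\,v\,\sqrt{\Ga(g,g)\Ga(v,v)}\le 2(\mathcal{I}(g))^2\Ga(v,v)+2(\mathcal{I}'(g))^2v^2\Ga(g,g)$) makes everything collapse: the $\Ga(v,v)$ contributions cancel, the coefficient of $\Ga(g,g)$ becomes exactly $-2K^2$ (using $\mathcal{I}\mathcal{I}''=-1$ a second time), and what is left is $\dot\alpha\,v^2K^2$. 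Hence $S\ge K^2\big(\dot\alpha\,v^2-2\,\Ga(g,g)\big)$.

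At this point \eqref{HQLi} enters precisely at the right place: $\Ga(g,g)=\left|\nabla P_{t-s}f\right|^2\le C_1^2\big(P_{t-s}(\left|\nabla f\right|)\big)^2=C_1^2v^2$, so $S\ge K^2v^2\big(\dot\alpha-2C_1^2\big)\ge0$ as soon as $\dot\alpha\ge 2C_1^2$. The choice $\alpha(s)=2C_1^4s$ is legitimate because $C_1\ge\sqrt2$ forces $C_1^4\ge C_1^2$; then $\Phi$ is nondecreasing on $[0,t]$ and $\mathcal{I}(P_tf)=\Phi(0)\le\Phi(t)=P_t\!\big(\sqrt{\mathcal{I}(f)^2+2C_1^4t\left|\nabla f\right|^2}\big)$, which is \eqref{eq:isop-tenso-heis}. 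The delicate point is the bookkeeping in the computation of $S$ and, above all, the cross-term cancellation, which is what dictates the balanced weight and brings $\mathcal{I}\mathcal{I}''=-1$ back in; note also that \eqref{eq:isop-tenso-heis} is genuinely stronger than the Lemma's inequality \eqref{eq:isop} --- the latter follows from it by $\sqrt{a^2+b^2}\le a+b$ --- so it cannot be recovered from \eqref{eq:isop} alone and a fresh interpolation is genuinely needed.
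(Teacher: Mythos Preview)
Your proof is correct and takes a genuinely different route from the paper. The paper does \emph{not} give a direct semigroup interpolation for \eqref{eq:isop-tenso-heis}; instead it first proves the weaker inequality \eqref{eq:isop} by interpolation and then upgrades it to \eqref{eq:isop-tenso-heis} via the transportation--rearrangement argument of Barthe--Maurey, which in turn relies on Bobkov's one-dimensional Gaussian inequality. In fact the paper explicitly says ``We ignore if \eqref{eq:isop-tenso-heis} can be obtained directly by semigroup interpolation, as for the elliptic case,'' noting that the Bakry--Ledoux proof uses a $\Gad$ bound unavailable here. Your argument answers that question in the affirmative: the key idea --- replacing $\left|\nabla P_{t-s}f\right|^2$ by $\big(P_{t-s}\left|\nabla f\right|\big)^2$ inside the square root --- is precisely what makes the $\Gad$ term disappear, so that only the $\sqrt{\Ga}$--$P_t$ subcommutation \eqref{HQLi} is needed. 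The algebra you record for $S$ and the cross-term cancellation via Cauchy--Schwarz plus balanced AM--GM checks out line by line. As a bonus, your computation actually gives the sharper constant $2C_1^2t$ (take $\alpha(s)=2C_1^2 s$); you only inflate to $2C_1^4t$ to match the statement, which is harmless since $C_1\ge\sqrt{2}$.

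One small correction to your closing remark: it is not true that \eqref{eq:isop-tenso-heis} ``cannot be recovered from \eqref{eq:isop} alone and a fresh interpolation is genuinely needed.'' The paper's own proof does exactly that recovery, via Barthe--Maurey; what is true is that the passage is not elementary and imports Bobkov's theorem on $\R$. Your direct argument is self-contained in that it uses only \eqref{HQLi} and the diffusion calculus, which is what the paper was missing.
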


\begin{proof}
  The desired result follows from the transportation-rearrangement argument
  given in \cite[prop. 5 p. 427]{barthe-maurey}, which is inspired from
  \cite[p. 273]{bakry-ledoux}. The method is not specific to the heat
  semigroup on the Heisenberg group. It is based in particular on a similar
  inequality for the standard Gaussian measure on $\R$ obtained by Bobkov in
  \cite{bobkov}.
\end{proof}

One of the most important aspect of \eqref{eq:isop-tenso-heis} is its
stability by tensor product, in contrast with \eqref{eq:isop}, while
maintaining the same isoperimetric content. Moreover, one may recover from
\eqref{eq:isop-tenso-heis} the Gross logarithmic Sobolev inequality
\eqref{eq:lsi} by using the fact that $\mathcal{I}'(u)\sim\sqrt{-2\log(u)}$
and $\mathcal{I}(u)\sim u\sqrt{-2\log(u)}$ at $u=0$. We ignore if
\eqref{eq:isop-tenso-heis} can be obtained directly by semigroup
interpolation, as for the elliptic case in \cite{bakry-ledoux}. The proof
given in \cite{bakry-ledoux} for the elliptic case is based directly on a
curvature bound at the level of the infinitesimal generator, which is not
implied by the gradient bound \eqref{HQLi} on $\He$. We ignore also if one
can adapt on the Heisenberg group the two points space approach used in
\cite{bobkov} or the martingale representation approach used in
\cite{barthe-maurey,capitaine,hsu,ledoux-zurich}. There is a lack of a direct
proof of \eqref{eq:isop-tenso-heis} on the Heisenberg group, despite the fact
that \eqref{eq:isop-tenso-heis} and \eqref{eq:isop} are equivalent, according
to the argument of Barthe and Maurey in \cite[prop. 5 p. 427]{barthe-maurey}.

\begin{rmk}[Abstract Markov settings]
 In fact, up to specific constants, most of the proofs given above have
 nothing to do with the group structure of the space or with the convolution
 semigroup nature of $(P_t)_{t\geq0}$. They remain actually valid in very
 general settings provided that the computations make sense. The key points
 are a $\sqrt{\Ga}-P_t$ sub-commutation and the semigroup and diffusion
 properties. Formally, let $L$ be a diffusion operator on a smooth complete
 connected differential manifold $\mathcal{M}$, generating a Markov semigroup
 $(P_t)_{t\geq0}=(e^{t L})_{t\geq0}$ with smooth density with respect to some
 reference Borel measure on $\mathcal{M}$. Let $2\Ga f=L(f^2)-2fLf$ and
 suppose that there exists $C:(0,\infty)\to(0,\infty)$ such that
 \begin{equation}\label{eq:gamcom}
   \sqrt{\Ga P_t f}\leq C(t)\,P_t(\sqrt{\Ga f})
 \end{equation}
 pointwise for every $t\geq0$ and every smooth $f:\mathcal{M}\to\mathbb{R}$.
 Let us define $R(t)$ by
 $$
 R(t)= \int_0^t\!
 C(s)\left(\int_0^{s}\!\frac{2}{C(u)^2}\,du\right)^{-\frac{1}{2}}\,ds.
 $$
 Then for every $t\geq0$, every $x\in\mathcal{M}$, and every smooth
 $f:\mathcal{M}\to\mathbb{R}$,
 \begin{equation}\label{eq:cheeger-generic}
   P_t(\left|f-P_t(f)(x)\right|)(x)
   \leq 2R(t)\,P_t(\sqrt{\Ga f})(x).
 \end{equation}
 Moreover, for every $t\geq0$ and every smooth $f:\mathcal{M}\to(0,1)$,
 \begin{equation}\label{eq:bobkov-generic}
   \mathcal{I}(P_t(f))-P_t(\mathcal{I}(f)) %
   \leq R(t)\,P_t(\sqrt{\Ga f}),
 \end{equation}
 and
 \begin{equation}\label{eq:bobkov-tenso-generic}
   \mathcal{I}(P_t(f)) %
   \leq P_t\left(\sqrt{(\mathcal{I}(f))^2+R(t)^2\,\Ga f}\right),
 \end{equation}
 where $\mathcal{I}$ stands for the Gaussian isoperimetric function as in
 \eqref{eq:equadiff-I}. Furthermore, if $I$ is an open interval of
 $\mathbb{R}$ and $\varphi:I\to\mathbb{R}$ is a smooth convex function such
 that $\varphi''>0$ on $I$ and $-1/\varphi''$ is convex on $I$, then for
 every $t\geq0$, every $x\in\mathcal{M}$, and every smooth $f:\mathcal{M}\to
 I$,
 \begin{equation}\label{eq:phisob-generic}
   P_t(\varphi(f))-\varphi(P_t f) %
   \leq \left(\int_0^t\!C(u)^2\,du\right)\,%
   P_t (\varphi''(f)\Ga f).
 \end{equation}
 Finally, if $P_t(\cdot)(x)\to\mu$ weakly as $t\to\infty$ for some $x\in\M$
 and some probability measure $\mu$ on $\M$ then the four inequalities
 (\ref{eq:cheeger-generic}-\ref{eq:phisob-generic}) above hold for $\mu$
 instead of $P_t(\cdot)(x)$. Here the constant in \eqref{eq:cheeger-generic}
 is obtained partly by using a reverse local Poincar\'e inequality deduced
 from \eqref{eq:gamcom}. On the Heisenberg group, we used an alternative
 constant for the reverse local Poincar\'e inequality, which was not deduced
 from \eqref{eq:gamcom}.
\end{rmk}

\subsection{Multi-times inequalities}

Let $\varphi:\mathcal{I}\to\mathbb{R}$ be fixed and as in \eqref{eq:phisob}.
The $\varphi$-entropy functional
$$
\mathbf{Ent}_\mu:f\mapsto\mathbf{Ent}_\mu(f) %
=\int\!\varphi(f)\,d\mu -\varphi\left(\int\!f\,d\mu\right)
$$
has the tensor product property. Namely, if
$\mu=\mu_1\otimes\ldots\otimes\mu_n$ is a probability measure on a product
space $E=E_1\times\cdots\times E_n$ then for every $f:E\to\mathcal{I}$ in the
domain of $\mathbf{Ent}_\mu$,
$$
\mathbf{Ent}_\mu(f) 
\leq \sum_{i=1}^n \int\!\mathbf{Ent}_{\mu_i}(f))\,d\mu
$$
where the integrals in $\mathbf{Ent}_{\mu_i}(f)$ act only on the $i^\text{th}$
coordinate. The details are given in \cite{kyoto}. Below, we use the notation
$\mathbf{Ent}(U)=\mathbb{E}(\varphi(U))-\varphi(\mathbb{E}(U))$ for any real
random variable $U$ taking its values in $\mathcal{I}$. Now, let
$(X_t)_{t\geq0}$ be the diffusion process on $\He$ generated by $L$, with
$X_0=0$. Let also $F:\He^n\to\mathcal{I}$ be some fixed smooth function. Here
$\He^n$ stands for the $n$-product space $\He\times\cdots\times\He$. Since
$(X_t)_{t\geq0}$ has independent stationary increments, i.e. is a L\'evy
process on $\He$ associated to a convolution semigroup, we have, for any
finite increasing sequence $0< t_1<\cdots<t_n$ of fixed times,
$$
\mathbf{Ent}(F(X_{t_1},\ldots,X_{t_n}))
=\mathbf{Ent}_{\mathcal{L}(Q_1,\ldots,Q_n)}(F\circ\pi)
$$
where $\pi:\He^n\to\He^n$ is defined by
$$
\pi(\Bx_1,\Bx_2,\ldots,\Bx_n) %
=(\Bx_1,\Bx_1\bullet\Bx_2,\ldots,\Bx_1\bullet\cdots\bullet\Bx_n)
$$
for every $(\Bx_1,\ldots,\Bx_n)\in\He^n$, and where $Q_1,\ldots,Q_n$ are
independent random variables on the Heisenberg group $\He$ with
$\mathcal{L}(Q_i)=\mathcal{L}((X_{t_{i-1}})^{-1}X_{t_i})=\mathcal{L}(X_{t_{i}-t_{i-1}})$
for every $i\in\{1,\ldots,n\}$, with $t_0=0$. The tensor product property of
the entropy given above together with \eqref{eq:phisob} gives
$$
\mathbf{Ent}(F(X_{t_1},\ldots,X_{t_n})) %
\leq
C^2_1\,\mathbb{E}_{\mathcal{L}(X_{t_1},\ldots,X_{t_n})}(\varphi''(F)\mathcal{D}^2_{t_1,\ldots,t_n}
F)
$$
where $C$ is as in \eqref{HQLi} and \eqref{eq:phisob}, and where
$$
\mathcal{D}^2_{t_1,\ldots,t_n} F =\sum_{i=1}^n (t_i-t_{i-1})\,|\nabla_i
(F\circ\pi)|^2\circ\pi^{-1}
$$
where $\nabla_i$ denote the left invariant gradient $\nabla$ of $\He$ acting
on the $i^{\text{th}}$ coordinate $\Bx_i$. Only the distribution of
$\varphi''(F)\mathcal{D}_{t_1,\ldots,t_n}^2 F$ under
$\mathcal{L}(X_{t_1},\ldots,X_{t_n})$ is of interest. Similarly, by using an
argument of Bobkov detailed for instance in \cite[p. 429-430]{barthe-maurey},
we get from \eqref{eq:isop-tenso-heis}, for any smooth function
$F:\He^n\to(0,1)$, by denoting $\nu=\mathcal{L}(X_{t_1},\ldots,X_{t_n})$,
$$
\mathcal{I}(\mathbb{E}_\nu(F)) \leq
\mathbb{E}_\nu\left(\sqrt{(\mathcal{I}(F))^2+2C_2^4\,\mathcal{D}^2_{t_1,\ldots,t_n}
   F}\right).
$$
We ignore if such a cylindrical approach leads to functional inequalities for
the paths space on $\He$, i.e. for the hypoelliptic Wiener measure, by letting
$n\to\infty$. It sounds interesting to try to make a link with
\cite{friz-oberhauser}.

{\footnotesize %

} 

\vfil

\noindent
Dominique \textsc{Bakry} \texttt{bakry[@]math.univ-toulouse.fr}\\
Fabrice \textsc{Baudoin} \texttt{fbaudoin[@]math.univ-toulouse.fr}\\
Michel \textsc{Bonnefont} \texttt{bonnefon[@]math.univ-toulouse.fr}\\
Djalil \textsc{Chafa\"\i} \texttt{chafai[@]math.univ-toulouse.fr}

\bigskip

\noindent
\textsc{Institut de Math\'ematiques de Toulouse (CNRS 5219)\\
Universit\'e Paul Sabatier \\
118 route de Narbonne, F-31062 Toulouse, France.}

\end{document}